
\documentclass{lmcs}
\pdfoutput=1

\usepackage{lastpage}
\lmcsdoi{17}{4}{20}
\lmcsheading{}{\pageref{LastPage}}{}{}%
{Feb.~26,~2021}{Dec.~17,~2021}{}

\usepackage{amsmath}
\usepackage{amssymb}
\usepackage{amsthm}

\usepackage{tikz}

\usepackage[utf8]{inputenc}

\def\PA{\mathsf{PA}}
\def\HYP{\mathsf{HYP}}

\def\NON{\mathsf{NON}}

\def\NRNG{\mathsf{NRNG}}
\def\NMAJ{\mathsf{NMAJ}}
\def\BCT{\mathsf{BCT}}

\def\WGEN{\mathsf{WGEN}}
\def\NLIM{\mathsf{NLIM}}
\def\EXT{\mathsf{EXT}}
\def\RET{\mathsf{RET}}
\def\MEET{\mathsf{MEET}}
\def\NLOW{\mathsf{NLOW}}

\def\NGEQ{\mathsf{NGEQ}}

\def\DIS{\mathsf{DIS}}

\def\EC{\mathsf{EC}}

\def\deg{\mathrm{deg}}

\newcommand{\uwidehat}[1]{%
  \mathpalette\douwidehat{#1}%
}
\makeatletter
\newcommand{\douwidehat}[2]{%
  \sbox0{$\m@th#1\widehat{\hphantom{#2}}$}%
  \sbox2{$\m@th#1x$}
  \sbox4{$\m@th#1#2$}
  \dimen0=\ht0
  \advance\dimen0 -.8\ht2
  \dimen2=\dp4
  \rlap{%
    \raisebox{\dimexpr\dimen0-\dimen2}{%
      \scalebox{1}[-1]{\box0}%
    }%
  }%
  {#2}%
}
\makeatother

\usepackage{catchfile}
\CatchFileEdef\user{"|kpsewhich -var-value USERNAME"}{\endlinechar=-1 }

\usepackage{lineno}

\title{Stashing And Parallelization Pentagons}

\author[V.~Brattka]{Vasco Brattka}
\address{Faculty of Computer Science, Universit\"at der Bundeswehr M\"unchen, Germany and
Department of Mathematics and Applied Mathematics, University of Cape Town, South Africa}
\email{Vasco.Brattka@cca-net.de}

\begin{document} 



\def\AA{{\mathcal A}}
\def\BB{{\mathcal B}}
\def\CC{{\mathcal C}}
\def\DD{{\mathcal D}}
\def\EE{{\mathcal E}}
\def\FF{{\mathcal F}}
\def\GG{{\mathcal G}}
\def\HH{{\mathcal H}}
\def\II{{\mathcal I}}
\def\JJ{{\mathcal J}}
\def\KK{{\mathcal K}}
\def\LL{{\mathcal L}}
\def\MM{{\mathcal M}}
\def\NN{{\mathcal N}}
\def\OO{{\mathcal O}}
\def\PP{{\mathcal P}}
\def\QQ{{\mathcal Q}}
\def\RR{{\mathcal R}}
\def\SS{{\mathcal S}}
\def\TT{{\mathcal T}}
\def\UU{{\mathcal U}}
\def\VV{{\mathcal V}}
\def\WW{{\mathcal W}}
\def\XX{{\mathcal X}}
\def\YY{{\mathcal Y}}
\def\ZZ{{\mathcal Z}}


\def\bA{{\mathbf A}}
\def\bB{{\mathbf B}}
\def\bC{{\mathbf C}}
\def\bD{{\mathbf D}}
\def\bE{{\mathbf E}}
\def\bF{{\mathbf F}}
\def\bG{{\mathbf G}}
\def\bH{{\mathbf H}}
\def\bI{{\mathbf I}}
\def\bJ{{\mathbf J}}
\def\bK{{\mathbf K}}
\def\bL{{\mathbf L}}
\def\bM{{\mathbf M}}
\def\bN{{\mathbf N}}
\def\bO{{\mathbf O}}
\def\bP{{\mathbf P}}
\def\bQ{{\mathbf Q}}
\def\bR{{\mathbf R}}
\def\bS{{\mathbf S}}
\def\bT{{\mathbf T}}
\def\bU{{\mathbf U}}
\def\bV{{\mathbf V}}
\def\bW{{\mathbf W}}
\def\bX{{\mathbf X}}
\def\bY{{\mathbf Y}}
\def\bZ{{\mathbf Z}}


\def\IB{{\Bbb{B}}}
\def\IC{{\Bbb{C}}}
\def\IF{{\Bbb{F}}}
\def\IN{{\Bbb{N}}}
\def\IP{{\Bbb{P}}}
\def\IQ{{\Bbb{Q}}}
\def\IR{{\Bbb{R}}}
\def\IS{{\Bbb{S}}}
\def\IT{{\Bbb{T}}}
\def\IZ{{\Bbb{Z}}}

\def\IIB{{\Bbb{\mathbf B}}}
\def\IIC{{\Bbb{\mathbf C}}}
\def\IIN{{\Bbb{\mathbf N}}}
\def\IIQ{{\Bbb{\mathbf Q}}}
\def\IIR{{\Bbb{\mathbf R}}}
\def\IIZ{{\Bbb{\mathbf Z}}}


\def\ELSE{\quad\mbox{else}\quad}
\def\WITH{\quad\mbox{with}\quad}
\def\FOR{\quad\mbox{for}\quad}
\def\AND{\;\mbox{and}\;}
\def\OR{\;\mbox{or}\;}

\def\To{\longrightarrow}
\def\TO{\Longrightarrow}
\def\In{\subseteq}
\def\sm{\setminus}
\def\Inneq{\In_{\!\!\!\!/}}
\def\dmin{\mathop{\dot{-}}}
\def\splus{\oplus}
\def\SEQ{\triangle}
\def\DIV{\uparrow}
\def\INV{\leftrightarrow}
\def\SET{\Diamond}

\def\kto{\equiv\!\equiv\!>}
\def\kin{\subset\!\subset}
\def\pto{\leadsto}
\def\into{\hookrightarrow}
\def\onto{\to\!\!\!\!\!\to}
\def\prefix{\sqsubseteq}
\def\rel{\leftrightarrow}
\def\mto{\rightrightarrows}

\def\B{{\mathsf{{B}}}}
\def\D{{\mathsf{{D}}}}
\def\G{{\mathsf{{G}}}}
\def\E{{\mathsf{{E}}}}
\def\J{{\mathsf{{J}}}}
\def\K{{\mathsf{{K}}}}
\def\L{{\mathsf{{L}}}}
\def\R{{\mathsf{{R}}}}
\def\T{{\mathsf{{T}}}}
\def\U{{\mathsf{{U}}}}
\def\W{{\mathsf{{W}}}}
\def\Z{{\mathsf{{Z}}}}
\def\w{{\mathsf{{w}}}}
\def\HP{{\mathsf{{H}}}}
\def\C{{\mathsf{{C}}}}
\def\Tot{{\mathsf{{Tot}}}}
\def\Fin{{\mathsf{{Fin}}}}
\def\Cof{{\mathsf{{Cof}}}}
\def\Cor{{\mathsf{{Cor}}}}
\def\Equ{{\mathsf{{Equ}}}}
\def\Com{{\mathsf{{Com}}}}
\def\Inf{{\mathsf{{Inf}}}}

\def\Tr{{\mathrm{Tr}}}
\def\Sierp{{\mathrm Sierpi{\'n}ski}}
\def\psisierp{{\psi^{\mbox{\scriptsize\Sierp}}}}
\def\cl{{\mathrm{{cl}}}}
\def\Haus{{\mathrm{{H}}}}
\def\Ls{{\mathrm{{Ls}}}}
\def\Li{{\mathrm{{Li}}}}

\def\CL{\mathsf{CL}}
\def\ACC{\mathsf{ACC}}
\def\AoUC{\mathsf{AUC}}
\def\DNC{\mathsf{DNC}}
\def\ATR{\mathsf{ATR}}
\def\LPO{\mathsf{LPO}}
\def\LLPO{\mathsf{LLPO}}
\def\WKL{\mathsf{WKL}}
\def\RCA{\mathsf{RCA}}
\def\ACA{\mathsf{ACA}}
\def\SEP{\mathsf{SEP}}
\def\BCT{\mathsf{BCT}}
\def\IVT{\mathsf{IVT}}
\def\IMT{\mathsf{IMT}}
\def\OMT{\mathsf{OMT}}
\def\CGT{\mathsf{CGT}}
\def\UBT{\mathsf{UBT}}
\def\BWT{\mathsf{BWT}}
\def\HBT{\mathsf{HBT}}
\def\BFT{\mathsf{BFT}}
\def\FPT{\mathsf{FPT}}
\def\WAT{\mathsf{WAT}}
\def\LIN{\mathsf{LIN}}
\def\B{\mathsf{B}}
\def\BF{\mathsf{B_\mathsf{F}}}
\def\BI{\mathsf{B_\mathsf{I}}}
\def\C{\mathsf{C}}
\def\CF{\mathsf{C_\mathsf{F}}}
\def\CN{\mathsf{C_{\IN}}}
\def\CI{\mathsf{C_\mathsf{I}}}
\def\CK{\mathsf{C_\mathsf{K}}}
\def\CA{\mathsf{C_\mathsf{A}}}
\def\WPO{\mathsf{WPO}}
\def\WLPO{\mathsf{WLPO}}
\def\MP{\mathsf{MP}}
\def\BD{\mathsf{BD}}
\def\Fix{\mathsf{Fix}}
\def\Mod{\mathsf{Mod}}

\newcommand{\hop}{{\mbox{$\boldsymbol{\cdot}$}}}

\def\w{\mathsf{w}}

\def\leqm{\mathop{\leq_{\mathrm{m}}}}
\def\equivm{\mathop{\equiv_{\mathrm{m}}}}
\def\leqT{\mathop{\leq_{\mathrm{T}}}}
\def\lT{\mathop{<_{\mathrm{T}}}}
\def\nleqT{\mathop{\not\leq_{\mathrm{T}}}}
\def\equivT{\mathop{\equiv_{\mathrm{T}}}}
\def\nequivT{\mathop{\not\equiv_{\mathrm{T}}}}
\def\leqwtt{\mathop{\leq_{\mathrm{wtt}}}}
\def\equiPT{\mathop{\equiv_{\P\mathrm{T}}}}
\def\leqW{\mathop{\leq_{\mathrm{W}}}}
\def\equivW{\mathop{\equiv_{\mathrm{W}}}}
\def\leqtW{\mathop{\leq_{\mathrm{tW}}}}
\def\leqSW{\mathop{\leq_{\mathrm{sW}}}}
\def\equivSW{\mathop{\equiv_{\mathrm{sW}}}}
\def\leqPW{\mathop{\leq_{\widehat{\mathrm{W}}}}}
\def\equivPW{\mathop{\equiv_{\widehat{\mathrm{W}}}}}
\def\leqFPW{\mathop{\leq_{\mathrm{W}^*}}}
\def\equivFPW{\mathop{\equiv_{\mathrm{W}^*}}}
\def\leqWW{\mathop{\leq_{\overline{\mathrm{W}}}}}
\def\nleqW{\mathop{\not\leq_{\mathrm{W}}}}
\def\nleqSW{\mathop{\not\leq_{\mathrm{sW}}}}
\def\lW{\mathop{<_{\mathrm{W}}}}
\def\lSW{\mathop{<_{\mathrm{sW}}}}
\def\nW{\mathop{|_{\mathrm{W}}}}
\def\nSW{\mathop{|_{\mathrm{sW}}}}
\def\leqt{\mathop{\leq_{\mathrm{t}}}}
\def\equivt{\mathop{\equiv_{\mathrm{t}}}}
\def\leqtop{\mathop{\leq_\mathrm{t}}}
\def\equivtop{\mathop{\equiv_\mathrm{t}}}

\def\bigtimes{\mathop{\mathsf{X}}}

\def\leqm{\mathop{\leq_{\mathrm{m}}}}
\def\equivm{\mathop{\equiv_{\mathrm{m}}}}
\def\leqT{\mathop{\leq_{\mathrm{T}}}}
\def\leqM{\mathop{\leq_{\mathrm{M}}}}
\def\equivT{\mathop{\equiv_{\mathrm{T}}}}
\def\equiPT{\mathop{\equiv_{\P\mathrm{T}}}}
\def\leqW{\mathop{\leq_{\mathrm{W}}}}
\def\equivW{\mathop{\equiv_{\mathrm{W}}}}
\def\nequivW{\mathop{\not\equiv_{\mathrm{W}}}}
\def\leqSW{\mathop{\leq_{\mathrm{sW}}}}
\def\equivSW{\mathop{\equiv_{\mathrm{sW}}}}
\def\leqPW{\mathop{\leq_{\widehat{\mathrm{W}}}}}
\def\equivPW{\mathop{\equiv_{\widehat{\mathrm{W}}}}}
\def\nleqW{\mathop{\not\leq_{\mathrm{W}}}}
\def\nleqSW{\mathop{\not\leq_{\mathrm{sW}}}}
\def\lW{\mathop{<_{\mathrm{W}}}}
\def\lSW{\mathop{<_{\mathrm{sW}}}}
\def\nW{\mathop{|_{\mathrm{W}}}}
\def\nSW{\mathop{|_{\mathrm{sW}}}}

\def\botW{\mathbf{0}}
\def\midW{\mathbf{1}}
\def\topW{\mathbf{\infty}}

\def\pol{{\leq_{\mathrm{pol}}}}
\def\rem{{\mathop{\mathrm{rm}}}}

\def\cc{{\mathrm{c}}}
\def\d{{\,\mathrm{d}}}
\def\e{{\mathrm{e}}}
\def\ii{{\mathrm{i}}}

\def\Cf{C\!f}
\def\id{{\mathrm{id}}}
\def\pr{{\mathrm{pr}}}
\def\inj{{\mathrm{inj}}}
\def\cf{{\mathrm{cf}}}
\def\dom{{\mathrm{dom}}}
\def\range{{\mathrm{range}}}
\def\graph{{\mathrm{graph}}}
\def\Graph{{\mathrm{Graph}}}
\def\epi{{\mathrm{epi}}}
\def\hypo{{\mathrm{hypo}}}
\def\Lim{{\mathrm{Lim}}}
\def\diam{{\mathrm{diam}}}
\def\dist{{\mathrm{dist}}}
\def\supp{{\mathrm{supp}}}
\def\union{{\mathrm{union}}}
\def\fiber{{\mathrm{fiber}}}
\def\ev{{\mathrm{ev}}}
\def\mod{{\mathrm{mod}}}
\def\sat{{\mathrm{sat}}}
\def\seq{{\mathrm{seq}}}
\def\lev{{\mathrm{lev}}}
\def\mind{{\mathrm{mind}}}
\def\arccot{{\mathrm{arccot}}}

\def\Add{{\mathrm{Add}}}
\def\Mul{{\mathrm{Mul}}}
\def\SMul{{\mathrm{SMul}}}
\def\Neg{{\mathrm{Neg}}}
\def\Inv{{\mathrm{Inv}}}
\def\Ord{{\mathrm{Ord}}}
\def\Sqrt{{\mathrm{Sqrt}}}
\def\Re{{\mathrm{Re}}}
\def\Im{{\mathrm{Im}}}
\def\Sup{{\mathrm{Sup}}}

\def\LSC{{\mathcal LSC}}
\def\USC{{\mathcal USC}}

\def\CE{{\mathcal{E}}}
\def\Pref{{\mathrm{Pref}}}

\def\Baire{\IN^\IN}

\def\TRUE{{\mathrm{TRUE}}}
\def\FALSE{{\mathrm{FALSE}}}

\def\co{{\mathrm{co}}}

\def\BBB{{\tt B}}

\newcommand{\SO}[1]{{{\mathbf\Sigma}^0_{#1}}}
\newcommand{\SI}[1]{{{\mathbf\Sigma}^1_{#1}}}
\newcommand{\PO}[1]{{{\mathbf\Pi}^0_{#1}}}
\newcommand{\PI}[1]{{{\mathbf\Pi}^1_{#1}}}
\newcommand{\DO}[1]{{{\mathbf\Delta}^0_{#1}}}
\newcommand{\DI}[1]{{{\mathbf\Delta}^1_{#1}}}
\newcommand{\sO}[1]{{\Sigma^0_{#1}}}
\newcommand{\sI}[1]{{\Sigma^1_{#1}}}
\newcommand{\pO}[1]{{\Pi^0_{#1}}}
\newcommand{\pI}[1]{{\Pi^1_{#1}}}
\newcommand{\dO}[1]{{{\Delta}^0_{#1}}}
\newcommand{\dI}[1]{{{\Delta}^1_{#1}}}
\newcommand{\sP}[1]{{\Sigma^\P_{#1}}}
\newcommand{\pP}[1]{{\Pi^\P_{#1}}}
\newcommand{\dP}[1]{{{\Delta}^\P_{#1}}}
\newcommand{\sE}[1]{{\Sigma^{-1}_{#1}}}
\newcommand{\pE}[1]{{\Pi^{-1}_{#1}}}
\newcommand{\dE}[1]{{\Delta^{-1}_{#1}}}

\newcommand{\dBar}[1]{{\overline{\overline{#1}}}}

\def\QED{$\hspace*{\fill}\Box$}
\def\rand#1{\marginpar{\rule[-#1 mm]{1mm}{#1mm}}}

\def\BL{\BB}


\newcommand{\bra}[1]{\langle#1|}
\newcommand{\ket}[1]{|#1\rangle}
\newcommand{\braket}[2]{\langle#1|#2\rangle}

\newcommand{\ind}[1]{{\em #1}\index{#1}}
\newcommand{\mathbox}[1]{\[\fbox{\rule[-4mm]{0cm}{1cm}$\quad#1$\quad}\]}


\newenvironment{eqcase}{\left\{\begin{array}{lcl}}{\end{array}\right.}

\keywords{Weihrauch complexity, computable analysis, computability theory, closure and interior operators, linear logic}

\begin{abstract}
Parallelization is an algebraic operation that lifts problems to sequences in a natural way.
Given a sequence as an instance of the parallelized problem, 
another sequence is a solution of this problem if {\em every} component is instance-wise a solution of the original problem.
In the Weihrauch lattice parallelization is a closure operator that corresponds to the bang operator in linear logic.
Here we introduce a dual operation that we call {\em stashing} and that also
lifts problems to sequences, but such that only {\em some} component has to be an instance-wise solution.
In this case the solution is stashed away in the sequence.
This operation, if properly defined, induces an interior operator in the Weihrauch lattice, which
corresponds to the question mark operator known from linear logic. 
It can also be seen as a countable version of the sum operation.
We also study the action of the monoid induced by stashing and parallelization on the Weihrauch lattice,
and we prove that it leads to at most five distinct degrees, which (in the maximal case) are always
organized in pentagons. 
We also introduce another closely related interior operator in the Weihrauch lattice that replaces solutions
of problems by upper Turing cones that are strong enough to compute solutions. 
It turns out that on parallelizable degrees this interior operator corresponds to stashing. 
This implies that, somewhat surprisingly, all problems which are simultaneously parallelizable and stashable 
have computability-theoretic characterizations. 
Finally, we apply all these results in order to study the recently introduced discontinuity problem, 
which appears as the bottom of a number of natural stashing-parallelization pentagons.
The discontinuity problem is not only the stashing of several variants of the lesser limited principle of omniscience,
but it also parallelizes to the non-computability problem. This supports the slogan that
``non-computability is the parallelization of discontinuity''. We also study the non-majorization problem
as an asymmetric version of the discontinuity problem and we show that it parallelizes to the hyperimmunity problem.
Finally we identify a phase transition related to the limit avoidance problem that marks a point where pentagons are taking off from the bottom of the Weihrauch lattice.
\end{abstract}

\maketitle


\section{Introduction}

The Weihrauch lattice has been used as a computability theoretic framework to analyze the uniform computational
content of mathematical problems from many different areas of mathematics, and it can also be seen as a uniform variant
of reverse mathematics (a recent survey on Weihrauch complexity can be found in \cite{BGP21}). 

The notion of a mathematical problem has a very general definition in this approach.

\begin{defi}[Problems]
\label{def:problem}
A {\em problem} is a multi-valued function $f:\In X\mto Y$ on represented spaces
$X,Y$ that has a realizer.
\end{defi}

We recall that by a {\em realizer} $F:\In\IN^\IN\to\IN^\IN$ of $f$, we mean a function $F$ that satisfies
$\delta_YF(p)\in f\delta_X(p)$ for all $p\in\dom(f\delta_X)$, where $\delta_X:\In\IN^\IN\to X$ and ${\delta_Y:\In\IN^\IN\to Y}$
are the representations of $X$ and $Y$, respectively (i.e., partial surjective maps onto $X$ and $Y$, respectively).
A problem is called {\em computable} if it has a computable realizer and {\em continuous} if it has a continuous realizer.

By $\langle p,q\rangle$ we denote the usual
pairing function on $\IN^\IN$, defined by $\langle p,q\rangle(2n)=p(n)$, $\langle p,q\rangle(2n+1)=q(n)$
for all $p,q\in\IN^\IN,n\in\IN$.
Weihrauch reducibility can now be defined as follows. 

\begin{defi}[Weihrauch reducibility]
Let $f:\In X\mto Y$ and $g:\In W\mto Z$ be problems. Then $f$ is called {\em Weihrauch reducible}
to $g$, in symbols $f\leqW g$, if there are computable $H,K:\In\IN^\IN\to\IN^\IN$ such that
$H\langle \id,GK\rangle$ is a realizer of $f$ whenever $G$ is a realizer of $g$.
Analogously, one says that $f$ is {\em strongly Weihrauch reducible} to $g$, in symbols $f\leqSW g$, if the expression $H\langle \id,GK\rangle$
can be replaced by $HGK$. Both versions of the reducibility have topological counterparts,
where one only requires $H,K$ to be continuous and these reducibilities are denoted by $\leq_\mathrm{W}^*$ and $\leq_\mathrm{sW}^*$,
respectively.
\end{defi}

The topological version of Weihrauch reducibility has always been studied alongside the computability-theoretic
version, and all four reducibilities induce a lattice structure (see \cite{BGP21} for references). 

Normally, the {\em Weihrauch lattice} refers
to the lattice induced by $\leqW$, but here we will freely use this term also for the lattice structure induced by $\leq_\mathrm{W}^*$. If we want to be more precise, we will call the latter the {\em topological Weihrauch lattice}.
The equivalence classes of problems under (strong) Weihrauch reducibility are called {\em (strong) Weihrauch degrees}.

In \cite[Definition~4.1]{BG11} the operation of parallelization was introduced. For reasons that will become clear below, we
denote the parallelization $\widehat{f}$ of a problem $f$ additionally with the non-standard notation $\Pi f$ in this article.

\begin{defi}[Parallelization]
For every problem $f:\In X\mto Y$ we define its {\em parallelization} 
$\Pi f:\In X^\IN\mto Y^\IN$ by $\dom(\Pi f):=\dom(f)^\IN$ and
\[\Pi f(x_n):=\{(y_n)\in Y^\IN:(\forall n)\; y_n\in f(x_n)\}\]
for all $(x_n)\in\dom(\Pi f)$. We also write $\widehat{f}:=\Pi f$ and we call a problem {\em parallelizable}
(or {\em strongly parallelizable}) if $f\equivW\widehat{f}$ (or $f\equivSW\widehat{f}$) holds.
\end{defi}

In \cite[Proposition~4.2]{BG11} it was proved that parallelization is a closure operator in the Weihrauch lattice.
This holds analogously for the topological versions of Weihrauch reducibility.

\begin{fact}[Parallelization]
\label{fact:parallelization} 
$f\mapsto\Pi f$ is a closure operator with respect to the following versions of Weihrauch reducibility: $\leqW,\leqSW,\leq_\mathrm{W}^*$ and $\leq_\mathrm{sW}^*$.
\end{fact}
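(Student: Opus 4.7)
The plan is to verify, for all four reducibilities simultaneously, the three defining properties of a closure operator: monotonicity ($f \leqW g \Rightarrow \Pi f \leqW \Pi g$), extensivity ($f \leqW \Pi f$), and idempotency ($\Pi\Pi f \equivW \Pi f$). The guiding principle is that each property can be witnessed by reductions whose outer and inner maps are constructed uniformly out of the witnesses for $f \leqW g$ by componentwise application under the standard coding $(\IN^\IN)^\IN\cong\IN^\IN$. Since both computable and continuous maps on $\IN^\IN$ are closed under componentwise application, the same construction yields $\leqW$ and $\leqSW$ when starting from computable witnesses and yields $\leq_\mathrm{W}^*$ and $\leq_\mathrm{sW}^*$ when starting from continuous witnesses.

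For monotonicity, given $f \leqW g$ witnessed by $H, K$, I would define the outer map by $K'((p_n)) := (K(p_n))_n$ and, given a solution sequence $(z_n)$ produced by any realizer of $\Pi g$ on the input $K'((p_n))$, the inner map by $H'((p_n),(z_n)) := (H\langle p_n, z_n\rangle)_n$. Because each $z_n$ realizes a $g$-solution to $K(p_n)$, each output component realizes an $f$-solution to $p_n$, so $\Pi f \leqW \Pi g$ holds; for the strong version one drops the $p_n$-coordinate in $H'$.

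For extensivity, mapping $x$ to the constant sequence $(x,x,\ldots)$ and projecting the first component of the solution gives $f \leqSW \Pi f$, with both maps computable. For idempotency, one direction is just extensivity applied to $\Pi f$; for the converse $\Pi\Pi f \leqSW \Pi f$ I would flatten a sequence of sequences via a computable bijection $\IN \to \IN\times\IN$, invoke $\Pi f$, and unflatten the resulting solution. Both flattening and unflattening are computable homeomorphisms, so the reduction is even strong.

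No step presents a serious obstacle. The only bookkeeping to watch is that each construction respects the induced representations of $X^\IN$ and $Y^\IN$ so that the lifted problems are well-defined with computable codings, but this is standard. The essential content of the fact is that the componentwise, universally quantified structure of $\Pi f$ is preserved by componentwise application of the reduction maps, and the class of computable (respectively continuous) functions on $\IN^\IN$ is stable under all such liftings.
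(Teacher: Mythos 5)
Your proof is correct and follows the standard approach; the paper itself does not reprove this fact but cites \cite[Proposition~4.2]{BG11}, and your argument (componentwise lifting of the reduction witnesses for monotonicity, constant sequence plus projection for extensivity, Cantor-pairing flattening for idempotency) is the same technique the paper uses explicitly for the dual result on stashing in Proposition~\ref{prop:stashing}. One minor remark: the only place where stashing genuinely diverges from parallelization is in the extensivity step --- for $\Sigma f \leqSW f$ the paper needs the completion $\overline{Y}$ to supply a computable dummy output for the unused components, whereas for $f \leqSW \Pi f$ no such device is needed because one simply duplicates the input and projects; your proposal correctly avoids any appeal to completion, confirming that the closure-operator argument for $\Pi$ is the strictly simpler of the two.
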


We recall the definition of a closure operator and an interior operator for a preordered set. By
a {\em preordered set} $(P,\leq)$ we mean a set $P$ with a relation $\leq$ on $P$ that is reflexive and transitive.
The relations $\leqW,\leqSW,\leq_\mathrm{W}^*$ and $\leq_\mathrm{sW}^*$ are preorders 
on the set $P$ of problems $f:\In\IN^\IN\mto\IN^\IN$ on Baire space\footnote{For studying the order structure it is sufficient
to consider problems on Baire space as representatives of arbitrary problems. This guarantees that $P$ is actually a set.}.

\begin{defi}[Closure and interior operator]
Let $(P,\leq)$ a preordered set with a function $C:P\to P$. Then $C$ is called a {\em closure operator}
for $\leq$ if the following hold for all $x,y\in P$:
\begin{enumerate}
\item $x\leq C(x)$ \hfill (extensive)
\item $x\leq y\TO C(x)\leq C(y)$ \hfill(monotone)
\item $CC(x)\leq C(x)$ \hfill(idempotent)
\end{enumerate}
Analogously, $C$ is called an {\em interior operator} for $\leq$ if the three conditions
hold for $\geq$ in place of $\leq$.
\end{defi}

Besides $f\mapsto\Pi f$ a number of other closure operator appeared in the study of the Weihrauch lattice~\cite{BGP21}.
However, not so many interior operators have been considered yet.
In this article we want to study a dual operation to parallelization that we call {\em stashing} and that
can be defined as follows. 

\begin{defi}[Stashing]
\label{def:stashing}
For every problem $f:\In X\mto Y$ we define its {\em stashing} or {\em summation}
$\Sigma f:\In X^\IN\mto \overline{Y}^\IN$ by $\dom(\Sigma f):=\dom(f)^\IN$ and
\[\Sigma f(x_n):=\{(y_n)\in\overline{Y}^\IN:(\exists n)\; y_n\in f(x_n)\}\]
for all $(x_n)\in\dom(\Sigma f)$. We also write $\uwidehat{f}:=\Sigma f$ and we call a problem {\em stashable}
(or {\em strongly stashable}) if $f\equivW\uwidehat{f}$ (or $f\equivSW\uwidehat{f}$) holds.
\end{defi}

Essentially, the definition corresponds to parallelization with an existential quantifier in the place of the universal one.
This means that given an instance $(x_n)$ for $\Sigma f$, a solution is a sequence $(y_n)$ such that 
$y_n\in f(x_n)$ for at least one $n\in\IN$. This operation can be seen as a countable version of the sum operation $+$ (see \cite{BGP21}),
which is the reason why we have called it {\em summation} in earlier presentations of this work. 
However, {\em stashing} better corresponds to the intuition of what $\Sigma f$ does.
 
A subtle technical point in this definition is that we use the {\em completion} $\overline{Y}$ of the space
$Y$ on the output side. For a represented space $(Y,\delta_Y)$ the {\em completion} $(\overline{Y},\delta_{\overline{Y}})$
is defined by $\overline{Y}:=Y\cup\{\bot\}$ (with a distinct element $\bot\not\in Y$) and $\delta_{\overline{Y}}:\IN^\IN\to\overline{Y}$ with
\[\delta_{\overline{Y}}(p):=\left\{\begin{array}{ll}
 \delta_Y(p-1) & \mbox{if $p-1\in\dom(\delta_Y)$}\\
  \bot & \mbox{otherwise}
\end{array}\right.,
\]
where $p-1\in\IN^\IN\cup\IN^*$ is a finite or infinite sequence that is obtained as the concatenation of
$p(0)-1,p(1)-1,p(2)-1,...$ with the understanding that $-1=\varepsilon\in\IN^*$ is the empty word.
This operation of completion saw some recent surge of interest after Dzhafarov~\cite{Dzh19} used it to show
that the strong version Weihrauch reducibility $\leqSW$ actually yields a lattice structure (here completion appeared
in the definition of a suitable supremum operation). See \cite{BG20,BG21a} for further applications and
a more detailed study of completion. 

One reason that the completion cannot be omitted in Definition~\ref{def:stashing} is that it allows
us to produce dummy outputs with no meaning (without the completion this might not be possible
as, for instance, some represented spaces $(Y,\delta_Y)$ might not even have computable points).
Another reason is that every partial computable problem with a completion on the output side
can be extended to a total computable problem in a certain sense.
In any case the completion enables us to prove the following result (see Proposition~\ref{prop:stashing})
in Section~\ref{sec:stashing}.

\begin{prop}[Stashing]
$f\mapsto\Sigma f$ is an interior operator with respect to the following versions of Weihrauch reducibility: $\leqW,\leqSW,\leq_\mathrm{W}^*$ and $\leq_\mathrm{sW}^*$.
\end{prop}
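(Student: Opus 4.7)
The plan is to verify the three interior-operator axioms — deflation $\Sigma f \leqSW f$, monotonicity, and idempotency $\Sigma f \leqSW \Sigma\Sigma f$ — for $\Sigma$. Since each reduction I construct is a single computable strong Weihrauch reduction, it simultaneously handles $\leqW$, $\leqSW$, and their topological counterparts.

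For deflation, given an instance $(x_n)$ of $\Sigma f$ I project onto $x_0$, call $f$ once to obtain a $\delta_Y$-name $q$ of some $y_0 \in f(x_0)$, and emit the $\Sigma f$-solution $(y_0, \bot, \bot, \ldots)$. At the name level, $q$ is lifted to a $\delta_{\overline{Y}}$-name by the digit-wise shift $p(i) := q(i) + 1$ (so that $p - 1 = q$), while each $\bot$-component is named by the constant-$0$ sequence (whose $(\cdot - 1)$-image is the empty word $\varepsilon \in \IN^*$, not in $\Baire$ and hence not in $\dom(\delta_Y)$). No back-reference to the input is used, so the reduction is strong.

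For monotonicity, suppose $f \leqW g$ via computable $H, K$ with $H\langle \id, GK \rangle$ a realizer of $f$ whenever $G$ realizes $g$. From a $\Sigma f$-instance $\langle p_n \rangle_n$ I feed $\langle K p_n \rangle_n$ to the $\Sigma g$-oracle and receive a $\Sigma g$-solution $\langle r_n \rangle_n$ — a sequence of $\delta_{\overline{Z}}$-names, of which I cannot decide which are genuine $g$-solution names and which name $\bot$. The key device is a completion-aware simulator producing $s_n$ from $r_n$: read $r_n$ digit by digit, compute the unshift $r_n - 1$ incrementally, and run $H\langle p_n, \cdot \rangle$ on the growing prefix; at each simulation step, if $H$ has produced a new output digit $d$, emit $d + 1$ into $s_n$, otherwise emit $0$. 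If $r_n$ actually names a $g$-solution, then $r_n - 1$ is an infinite $\delta_Z$-name and $H\langle p_n, r_n - 1\rangle$ is an infinite $\delta_Y$-name of an element of $f(x_n)$; by the concatenation semantics of $(\cdot - 1)$, the $0$-padding contributes nothing and $s_n - 1$ equals this $\delta_Y$-name, so $s_n$ names the same element. Otherwise $s_n$ is some $\delta_{\overline{Y}}$-name, possibly of $\bot$, which is harmless since correctness of the $\Sigma f$-output only requires the existential witness $n_0$ where $r_{n_0}$ is honest. This completion bookkeeping is the technical heart of the proof; for $\leqSW$ one simply omits the $\langle p_n, \cdot \rangle$ pairing.

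For idempotency, fix a computable bijection $\pi : \IN^2 \to \IN$ and map the $\Sigma f$-instance $(x_n)_n$ to the $\Sigma\Sigma f$-instance $((x_{\pi(n, m)})_m)_n$. A $\Sigma\Sigma f$-solution $(v_n)_n \in \overline{\overline{Y}^\IN}^\IN$ has some $v_{n_0} \in \overline{Y}^\IN$ satisfying $v_{n_0}(m_0) \in f(x_{\pi(n_0, m_0)})$ for some $m_0$. Define the $\Sigma f$-output by $y'_k := v_n(m)$ with $(n, m) := \pi^{-1}(k)$, so that $y'_{\pi(n_0, m_0)} = v_{n_0}(m_0) \in f(x_{\pi(n_0, m_0)})$ is the required existential witness. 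At the name level this is a lazy rearrangement analogous to the one in monotonicity, gracefully handling outer-$\bot$ values $v_n$ (whose $\delta_{\overline{Y}^\IN}$-unshifts are finite or invalid) by emitting $\bot$-names for the corresponding $y'_k$. Once the $(\cdot) + 1 \leftrightarrow (\cdot) - 1$ interplay is in hand, all three axioms go through without further difficulty.
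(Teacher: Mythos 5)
Your proof is correct and follows essentially the same route as the paper's: in all three parts the key insight is that the completion $\overline{Y}$ on the output side provides computable dummy outputs $\bot$ (deflation), allows the inner realizer $H$ to be extended totally so that names of $\bot \in \overline{Z}$ are processed harmlessly (monotonicity), and admits a computable retraction collapsing $\overline{\overline{Y}^\IN}$ back to $\overline{Y}^\IN$ (idempotency). The only difference is that you hand-roll these facts at the realizer level via explicit shift/pad constructions, whereas the paper invokes previously established machinery (computability of the completion $\overline{H}$ from \cite[Proposition~4.9]{BG20} and the retraction of Lemma~\ref{lem:product-retraceable}); your inline versions are sound but make the argument longer and somewhat less modular.
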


While parallelization $\Pi$ can be seen as the counterpart of the 
bang operator ``!' in linear logic \cite{BGP21}, stashing $\Sigma$ can be seen as the counterpart of the dual why-not operator ``?''.

Since our lattice is now equipped with a closure operator $f\mapsto\Pi f$ and a dual interior operator
$f\mapsto \Sigma f$, it is natural to ask how the monoid generated by $\{\Pi,\Sigma\}^*$ acts on 
the lattice structure? In other words, starting from an arbitrary problem $f$, what kind of problems can 
we generate by repeated applications of $\Pi$ and $\Sigma$ (in any order) to $f$? 
And how are these problems related with respect to the lattice structure?

In Section~\ref{sec:monoid} we prove that we can generate at most five distinct problems in this way (up to equivalence) and that
these five problems (in the maximal case) are always organized in a pentagon (see Proposition~\ref{prop:monoid}, Corollary~\ref{cor:full-pentagon}).

\def\colorf{orange!30}
\def\colorpf{red!30}
\def\colorsf{blue!30}
\def\colorspf{purple!30}
\def\colorpsf{violet!30}

\begin{figure}
\begin{tikzpicture}[scale=2]
\node[style={fill=\colorf},left] at (-1.03,0) {$\ACC_\IN$};
\node[fill,circle,scale=0.3] (f) at (-1,0) {};
\node[style={fill=\colorsf},below] at (-0.3,-0.981) {$\DIS$};
\node[fill,circle,scale=0.3] (Sf) at (-0.3,-0.951) {};
\node[style={fill=\colorpf},above] at (-0.3,0.981) {$\DNC_\IN$};
\node[fill,circle,scale=0.3]  (Pf) at (-0.3,0.951) {};
\node[style={fill=\colorspf},right] at  (0.83,0.588)  {$\DNC_\IN^\DD$};
\node[fill,circle,scale=0.3] (SPf) at (0.8,0.588) {};
\node[style={fill=\colorpsf},right] at (0.83,-0.588) {$\NON$};
\node[fill,circle,scale=0.3]  (PSf) at (0.8,-0.588) {};
\draw[thick] (Pf) -- (f);
\draw[thick] (Pf) -- (SPf);
\draw[thick] (f) -- (Sf);
\draw[thick] (SPf) -- (PSf);
\draw[thick] (PSf) -- (Sf);
\node at (-0.75,0.585) {$\Pi$};
\node at (0.38,0.88) {$\Sigma$};
\node at (-0.75,-0.585) {$\Sigma$};
\node at (0.38,-0.88) {$\Pi$};
\end{tikzpicture}
\caption{$\ACC_\IN$ pentagon in the Weihrauch lattice.} 
\label{fig:ACCN-pentagon}
\end{figure}
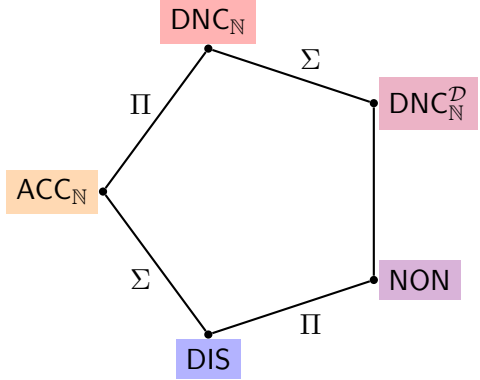

The maximal case can actually occur and in Section~\ref{sec:discontinuity-problem} we study a number
of specific such pentagons, in particular the one shown in the diagram in Figure~\ref{fig:ACCN-pentagon}.
Here every problem in the diagram allows a $\leqSW$--reduction to any problem above it that is connected with a line
and no other $\leq_\mathrm{W}^*$--reductions are possible (except for those that follow by transitivity).
We define all the problems that occur in this diagram and some further problems that we are going to study in this article.

\begin{defi}[Some problems]
\label{def:problems}
We consider the following problems for $X\In\IN$:
\begin{enumerate}
\item $\LPO:\IN^\IN\to\{0,1\},\LPO(p)=1:\iff p=000...$,
\item $\lim:\In\IN^\IN\to\IN^\IN,\langle p_0,p_1,p_2,...\rangle\mapsto\lim_{n\to\infty}p_n$,
\item $\lim_X:\In X^\IN\to X,(x_n)_{n\in\IN}\mapsto\lim_{n\to\infty}x_n$,
\item $\J:\IN^\IN\to\IN^\IN,p\mapsto p'$,
\item $\EC:\IN^\IN\to2^\IN,p\mapsto\range(p-1)$,
\item $\C_X:\In\IN^\IN\mto X,p\mapsto X\setminus\range(p-1)$, $\dom(\C_X)=\{p\in\IN^\IN:|X\setminus\range(p-1)|\geq1\}$,
\item $\ACC_X:\In\IN^\IN\mto X,p\mapsto X\setminus\range(p-1)$ with\\
        $\dom(\ACC_X)=\{p\in\IN^\IN:|\range(p-1)|\leq1\}$,
\item $\AoUC_X:\In\IN^\IN\mto X,p\mapsto X\setminus\range(p-1)$ with\\
         $\dom(\AoUC_X)=\{p\in\IN^\IN:|X\setminus\range(p-1)|=1\mbox{ or }\range(p-1)=\emptyset\}$,
\item $\DNC_X:\IN^\IN\mto X^\IN,p\mapsto\{q\in X^\IN:(\forall i\in\IN)\;\varphi^p_i(i)\not=q(i)\}$,
\item $\PA:\IN^\IN\mto\IN^\IN,p\mapsto\{q\in\IN^\IN:p\ll q\}$,
\item $\WKL:\In\Tr\mto 2^\IN,T\mapsto[T]$ with $\dom(\WKL)=\{T:T$ infinite$\}$,
\item $\NON:\IN^\IN\mto\IN^\IN,p\mapsto\{q\in\IN^\IN:q\nleqT p\}$,
\item $\DIS:\IN^\IN\mto\IN^\IN,p\mapsto\{q\in\IN^\IN:\U(p)\not=q\}$,
\item $\NRNG:\IN^\IN\mto2^\IN,p\mapsto\{A\in2^\IN:A\not=\range(p-1)\}$.
\end{enumerate}
\end{defi}

Here $\LPO$ is also known as {\em limited principle of omniscience} and it is nothing but the characteristic
function of the zero sequence. By $\lim$ we just denote the ordinary limit map with respect to the
Baire space topology, where for convenience, the input sequence is encoded by
$\langle p_0,p_1,p_2,... \rangle\langle n,k\rangle:=p_n(k)$ where $\langle n,k\rangle:=\frac{1}{2}(n+k)(n+k+1)+k$
is the usual Cantor pairing function for $n,k\in\IN$.
By $p'$ we denote the {\em Turing jump} of $p\in\IN^\IN$.
We identify $n\in\IN$ with the set $n=\{0,...,n-1\}$.
The problem $\EC$ (this name was introduced in \cite[Exercise~8.2]{Wei00}) was originally
studied under the name $\C$ \cite{Ste89,Myl92,Bra99,Bra05,Myl06,BG11}. Intuitively,
$\EC$ translates enumerations into characteristic functions.
The problem $\C_\IN$ is known as choice on the natural numbers and was introduced and studied in \cite{BG11a,BBP12}.
The problems $\ACC_X$ are also known under the name $\LLPO_X$ and have been studied
in \cite{Wei92c,HK14a,BHK17a}. The acronym $\ACC$ stands for {\em all-or-co-unique choice} and $\LLPO:=\C_2=\ACC_2=\AoUC_2$
is known as {\em lesser limited principle of omniscience}. We recall that $p-1$ was defined above and
$|A|$ denotes the cardinality of the set $A$. 
The acronym $\AoUC$ stands for {\em all-or-unique choice}. This problem was studied mostly for the unit interval $X=[0,1]$ \cite{Pau11,BGH15a,KP16b}
and not  for spaces $X\In\IN$ that we are interested in here. 
The acronym $\DNC$ stands for {\em diagonally non-computable} and
by $\varphi^p$ we denote a standard
G\"odel numbering of the partial computable functions $\varphi^p_i:\In\IN\to\IN$ relative to some
oracle $p\in\IN^\IN$. 
The acronym $\PA$ stands for {\em Peano arithmetic} and 
by $p\ll q$ we express the fact that $q$ is of PA--degree relative to $p$,
which means that $q$  computes a path through every infinite binary tree that is computable relative to $p$.
The relation $\ll$ was introduced by Simpson~\cite{Sim77}.
By $\Tr$ we denote the set of binary trees $T\In\{0,1\}^*$ and $[T]$ denotes the set of infinite paths of such a tree
and $\WKL$ stands for {\em Weak K\H{o}nig's Lemma}.
By $\leqT$ we denote Turing reducibility and by $\U:\In\IN^\IN\to\IN^\IN$ we denote
some universal computable function. Such a function can be defined, for instance, by
$\U\langle\langle i,r\rangle,p\rangle:=\varphi^{\langle r,p\rangle}_i$ whenever $\varphi^{\langle r,p\rangle}_i$ is total
(and undefined otherwise).\footnote{The universal function $\U$ has been defined
differently in \cite{Bra20}, but our definition is equivalent, as the function $\U$ defined here also
satisfies a utm- and an smn-theorem~\cite{Wei00}. In particular, the discontinuity problem $\DIS$ defined
with one version of $\U$ is strongly Weihrauch equivalent to the one defined with the other version of $\U$.} 
Here $\langle i,r\rangle:=ir$ for $i\in\IN$ and $r\in\IN^\IN$.
The problems $\NON$ and $\DIS$ are called the {\em non-computability problem} and the {\em discontinuity problem},
respectively. The problem $\NRNG$ is called {\em range non-equality problem} and is introduced here.

Nobrega and Pauly used Wadge games to characterize certain lower cones in the Weihrauch lattice~\cite{NP19}.
In \cite{Bra20} we have characterized the upper cone of the discontinuity problem by Wadge games on problems.
The characterization goes as follows~\cite[Theorem~17, Corollary~28]{Bra20}.

\begin{thm}[Wadge games and the discontinuity problem]
\label{thm:DIS}
Let $f:\In X\mto Y$ be a problem. Then
$\DIS\leqW f\iff$ Player I has a computable winning strategy in the Wadge game $f$.
\end{thm}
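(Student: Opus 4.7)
The plan is to prove both directions via a recursion-theoretic diagonalization. The Wadge game on $f$ is the usual infinite game in which Players~I and~II alternate producing natural numbers, forming plays $p$ and $q$ respectively, with Player~I winning iff $p$ is a $\delta_X$-name of an element of $\dom(f)$ and $q$ is not a $\delta_Y$-name of any element of $f\delta_X(p)$.

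For the forward direction, assume $\DIS\leqW f$ via computable $K$ and $H$. By the parametrised version of Kleene's recursion theorem I obtain a computable map $q\mapsto e(q)$ on $\IN^\IN$ with $\U(e(q)) = H\langle e(q),q\rangle$ whenever both sides are defined. Player~I's strategy is to play $K(e(q))$ interactively as Player~II reveals $q$; this is a computable strategy because $K$ and $e$ are continuous, so the first bits of $K(e(q))$ depend only on a finite prefix of $q$. Suppose, toward a contradiction, that Player~II plays a valid $q$. Then there is a realizer $G$ of $f$ with $GK(e(q)) = q$, so the reduction yields $H\langle e(q),q\rangle\ne\U(e(q))$, contradicting the defining equation for $e$. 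Hence Player~I wins.

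For the converse, let $\sigma$ be a computable winning strategy for Player~I. I define $K(p)$ to be the sequence produced by $\sigma$ against a computable Player~II strategy $\tau_p$ that reads and plays the bits of $\U(p)$, dovetailed with a padding scheme that keeps $K$ total computable on $\IN^\IN$, and I set $H\langle p,q\rangle := q$. The dovetailing is chosen so that if $\U(p)$ is total then the simulated play $\tau_p$ coincides with $\U(p)$ exactly; $\sigma$'s winning property then says $\U(p)$ is not a valid name of any solution to $f(K(p))$, whereas the genuine solution $GK(p)$ is, so $GK(p)\ne\U(p)$. If $\U(p)$ is partial, then any total $GK(p)\in\IN^\IN$ differs trivially from $\U(p)$ and thus lies in $\DIS(p)$. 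Either way $H\langle p,GK(p)\rangle = GK(p)\in\DIS(p)$.

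The principal obstacle is the delicate simulation in the converse direction: one must keep $K$ total computable on all of $\IN^\IN$ --- which forces Player~II to keep producing moves even when $\U(p)$ has stalled --- while still ensuring that, in the key case where $\U(p)$ is total, Player~II's play matches $\U(p)$ as an infinite sequence, so that Player~I's victory translates into the desired inequality $GK(p)\ne\U(p)$. This can be arranged by a stage-based scheme in which Player~II waits at move $n$ for $\U(p)(n)$ to converge before playing it, combined with a second invocation of the recursion theorem to synchronise the timing; the forward direction, by comparison, is routine once Kleene's theorem is applied in its continuous parametrised form.
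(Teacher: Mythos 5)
The paper does not itself prove Theorem~\ref{thm:DIS}; it is imported from \cite{Bra20} (Theorem~17 and Corollary~28 there), so there is no in-text argument to compare against. Your overall architecture does match the cited proof: a Kleene recursion-theorem diagonalization for the forward direction, and a direct conversion of Player~I's winning strategy into the Weihrauch reduction functions $K,H$ for the converse. The self-referential fixed point $\U(e(q))=H\langle e(q),q\rangle$ is exactly the right device for the forward direction, and taking $H\langle p,q\rangle:=q$ together with $K(p)=$ Player~I's play against a simulated opponent emitting $\U(p)$ is the right shape for the converse.

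The gap in your write-up concerns the game protocol, specifically whether and how players may wait, and it affects both directions. In the forward direction you argue that ``Player~I plays $K(e(q))$ interactively'' is a computable strategy because $K\circ e$ is continuous; but continuity only says that the first $n$ output symbols are determined by \emph{some} finite prefix of $q$, not by a prefix of length at most $n$. If the game requires Player~I to emit a genuine move at every round, this strategy is not well-formed; if Player~I may pass, you still have to argue that against a stalling Player~II the joint play eventually produces an infinite valid $\delta_X$-name (otherwise Player~II wins by default). Dismissing this direction as ``routine'' obscures exactly the timing issue you (correctly) flag in the converse. In the converse, your diagnosis of the obstacle is right, but the proposed repair --- ``a second invocation of the recursion theorem to synchronise the timing'' --- is not the right tool. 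What is needed is that the simulated Player~II may pass whenever $\U(p)(n)$ has not yet converged; then the non-pass moves of Player~II are literally $\U(p)$ when $\U(p)$ is total (and a finite prefix of it otherwise), and Player~I's winning property then simultaneously guarantees that $K(p)$ is a valid input name for every $p$ (hence $K$ is total computable) and that $\U(p)$ fails to name any solution of $f$ at $\delta_X K(p)$, whence $GK(p)\neq\U(p)$. The passing convention built into the generalized Wadge game of \cite{Bra20} is the essential technical device; your proof needs to invoke it explicitly rather than reach for ad hoc padding or a second fixed-point argument, and it needs to be invoked in the forward direction as well.
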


An analogous result holds for $\leq_\mathrm{W}^*$ and (not necessarily computable) winning
strategies~\cite[Theorem~27]{Bra20}. We are going to use Theorem~\ref{thm:DIS} in the
proof of Proposition~\ref{prop:Sigma-ACC} that establishes the pentagon of $\ACC_\IN$ shown in Figure~\ref{fig:ACCN-pentagon}.

Several facts are known about the parallelization of the problems summarized in Definition~\ref{def:problems}.
These results were proved in~\cite[Lemma~6.3, Theorem~8.2]{BG11}, \cite[Lemma~8.9]{BBP12} and the result $\widehat{\ACC_X}\equivSW\DNC_X$
has first been proved by Higuchi and Kihara~\cite[Proposition~81]{HK14a} and independently in~\cite[Theorem~5.2]{BHK17a}.
See also the survey \cite{BGP21}.

\begin{fact}[Parallelization of problems]
\label{fact:parallelization-problem}
$\widehat{\LPO}\equivSW\widehat{\C_\IN}\equivSW\widehat{\lim_X}\equivSW\lim\equivSW\J\equivSW\EC$, \linebreak
$\widehat{\LLPO}\equivSW\widehat{\C_n}\equivSW\WKL$, and $\widehat{\ACC_X}\equivSW\DNC_X$
for $X=\IN$ or $X\geq2$ and $n\geq2$.
\end{fact}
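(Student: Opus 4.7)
The plan is to split the statement into the three independent chains
\[\widehat{\LPO}\equivSW\widehat{\C_\IN}\equivSW\widehat{\lim_X}\equivSW\lim\equivSW\J\equivSW\EC,\quad \widehat{\LLPO}\equivSW\widehat{\C_n}\equivSW\WKL,\quad\widehat{\ACC_X}\equivSW\DNC_X,\]
and to prove each by explicit pairwise strong reductions in both directions, exploiting that the pairing function on $\IN^\IN$ interacts cleanly with parallelization.

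For the first chain, I would first verify the jump-side equivalences $\lim\equivSW\J\equivSW\EC$ via the uniform Shoenfield limit lemma: the bits of $p'$ are computable as limits of finite approximations from $p$, and conversely the $n$-th coordinate of $\lim\langle p_0,p_1,\ldots\rangle$ is uniformly decidable from $p'$. For $\EC$, observe that $\range(p-1)$ is recursively enumerable in $p$, so its characteristic function is computable from $p'$; conversely any recursively enumerable set can be enumerated from a given name, producing an $\EC$-instance. Next, $\widehat{\LPO}\equivSW\lim$ follows because a single $\LPO$ call decides a $\Pi_1^0$ predicate, and an $\IN$-indexed family of such calls reconstructs a characteristic function, i.e.\ precisely what is needed for $\lim$ (or equivalently $\EC$). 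The equivalence $\widehat{\C_\IN}\equivSW\widehat{\LPO}$ uses that a single $\C_\IN$-instance can be decoded by countably many $\LPO$ queries asking, for each $n$, whether $n$ lies in the enumerated complement; combined with the idempotency $\widehat{\widehat{\LPO}}\equivSW\widehat{\LPO}$ from Fact~\ref{fact:parallelization}, this closes the loop. Finally, $\widehat{\lim_X}\equivSW\lim$ is immediate from the fact that names of limits in an admissibly represented space factor through Baire-space limits via $\delta_X$.

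For the second chain, the key picture is that an infinite binary tree $T$ encodes level-by-level a sequence of $\LLPO$ instances: at each node one asks which of its two children extends to an infinite subtree, and a consistent sequence of $\LLPO$-answers traces out a path in $[T]$. Conversely, given a sequence of $\LLPO$-instances one assembles them into a binary tree whose infinite paths encode valid simultaneous answers. The $\C_n$ version proceeds identically with $n$-ary trees and uses that every infinite binary tree can simulate $n$-ary branching by block-encoding.

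For the third chain, the reduction $\DNC_X\leqSW\widehat{\ACC_X}$ proceeds by constructing, for each $i$, an $\ACC_X$-instance whose enumeration attempts to list $\varphi^p_i(i)$ as the forbidden value (if this ever converges, and enumerates nothing otherwise); any sequence of valid $\ACC_X$-answers then provides a DNC function against $\varphi^p$. Conversely, a DNC function yields answers for every $\ACC_X$-instance simultaneously, using the fact that such an instance only rules out a single value, which can be read off the enumeration index via a uniform translation. The main obstacle across all three chains is the \emph{strong} reduction requirement: no post-processing of the oracle output is permitted, so the encodings must be engineered so that the oracle's output is already in the correct format. The cleanest way to handle this is to ensure every reduction is built from the pairing function and representations commuting with parallelization, so that nested $\widehat{\widehat{\cdot}}$ steps collapse via Fact~\ref{fact:parallelization} without introducing hidden output transformations.
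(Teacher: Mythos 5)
The paper does not actually prove this statement: it is labeled a Fact and the surrounding text attributes it to Brattka--Gherardi, Brattka--de~Brecht--Pauly, Higuchi--Kihara and Brattka--Hendtlass--Kreuzer. So there is no in-paper argument to compare your route against; your decomposition into the same three chains is the natural one and matches the structure of the cited sources.

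That said, your sketch contains one genuine gap and one step that fails as literally stated. The gap: for $\widehat{\C_\IN}\equivSW\widehat{\LPO}$ you argue only $\widehat{\C_\IN}\leq\widehat{\LPO}$ (decode a single $\C_\IN$-instance by $\LPO$-queries, then parallelize and invoke idempotency); the converse, equivalently $\lim\leqSW\widehat{\C_\IN}$ once your step $\widehat{\LPO}\equivSW\lim$ is in place, is never addressed. It requires something like $\lim_\IN\leqSW\C_\IN$, and to make that reduction \emph{strong} one must enumerate a set whose complement consists of pairs $\langle n,m\rangle$ coding the limit $n$ together with a stabilization witness $m$, so the $\C_\IN$-answer can be decoded without re-reading the input. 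The failing step is $\WKL\leqSW\widehat{\LLPO}$: for a node $w$ whose own subtree is finite, both children have finite subtrees, so the naive instance ``which child is extendible?'' enumerates both forbidden options and falls outside $\dom(\LLPO)$, hence the product instance is not in $\dom(\widehat{\LLPO})=\dom(\LLPO)^\IN$. One must stagger the enumeration so that each node ever forbids at most one option; with that patch all instances are admissible, the answers are correct along the branch actually followed from the root, and the ``walk along the answers'' map $H$ depends only on the $\widehat{\LLPO}$-output, which is exactly what makes the reduction strong. Your $\widehat{\ACC_X}\leqSW\DNC_X$ direction is fine once the vague ``uniform translation'' is replaced by a fixed computable $s$ from the smn-theorem with $\varphi^{\langle p_0,p_1,\ldots\rangle}_{s(n)}(s(n))$ equal to the forbidden value of $p_n$, together with $H(q)=(q(s(n)))_n$.
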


Until recently the problems $\ACC_\IN$ and $\NON$ were the two weakest unsolvable (mutually incomparable)
natural problems in the Weihrauch lattice, and, in fact, they are the two weakest problems discussed in~\cite{BHK17a}.
Hence, it is a somewhat surprising coincidence that these problems appear together in the diagram
in Figure~\ref{fig:ACCN-pentagon}. In fact, they are related through $\NON\equivSW\Pi\Sigma(\ACC_\IN)$.
The discontinuity problem $\DIS$ was introduced in \cite{Bra20} and it was proved that (under the axiom of determinacy)
$\DIS$ is actually the weakest discontinuous problem with respect to the topological version of Weihrauch
reducibility $\leq_\mathrm{W}^*$. Part of the above relation between $\ACC_\IN$ and $\NON$ is that we
are going to prove that $\DIS$ parallelizes to $\NON$ (see Theorem~\ref{thm:DIS-NON}).

\begin{thm}[Non-computability is parallelized discontinuity]
$\NON\equivSW\widehat{\DIS}$.
\end{thm}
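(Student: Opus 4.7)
The plan is to prove the two strong Weihrauch reductions $\widehat{\DIS} \leqSW \NON$ and $\NON \leqSW \widehat{\DIS}$ separately. The first direction follows by composing two easy observations, while the second relies on a diagonal-against-diagonal use of the smn-theorem.

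For $\widehat{\DIS} \leqSW \NON$, I would first observe that $\DIS \leqSW \NON$ via the identity reduction: since $\U(p)$ is $p$-computable, any $q \nleqT p$ automatically satisfies $q \neq \U(p)$. Secondly, $\NON$ is strongly parallelizable: given an instance $(p_n)$, form $p := \langle p_0, p_1, p_2, \ldots\rangle$, apply $\NON$ to obtain $q \nleqT p$, and output the constant sequence $(q, q, q, \ldots)$, using that $p_n \leqT p$ for every $n$. Monotonicity of the closure operator $\widehat{\cdot}$ then yields $\widehat{\DIS} \leqSW \widehat{\NON} \equivSW \NON$.

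For $\NON \leqSW \widehat{\DIS}$, I would design $K$ and $H$ as follows. Let $\sigma$ be an smn-function satisfying $\varphi^p_{\sigma(i,n)}(k) = \varphi^p_i(\langle n, k \rangle)$ for all $p, i, n, k$. Given $p$, for each $n$ I construct $\tilde{p}_n$ such that $\U(\tilde{p}_n) = \varphi^p_{\sigma(n,n)}$; this is achievable from the explicit form $\U(\langle \langle i, r\rangle, s\rangle) = \varphi^{\langle r, s\rangle}_i$ by decomposing $p = \langle r, s\rangle$ via its even and odd parts and setting $\tilde{p}_n := \langle\langle \sigma(n, n), r\rangle, s\rangle$. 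Applying $\widehat{\DIS}$ then delivers a sequence $(q_n)$ with $q_n \neq \varphi^p_{\sigma(n,n)}$ whenever the latter is total, and I would output $H((q_n)) := \langle q_0, q_1, q_2, \ldots\rangle$.

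The main thing to verify is $q := \langle q_n \rangle \nleqT p$. Supposing $q \leqT p$, the map $\langle n, k\rangle \mapsto q_n(k)$ is $p$-computable, so there is some $i^*$ with $\varphi^p_{i^*}(\langle n, k\rangle) = q_n(k)$ for all $n, k$. Applying smn yields $\varphi^p_{\sigma(i^*, i^*)} = q_{i^*}$, contradicting the $\DIS$-constraint at index $i^*$. The main subtle point is that partiality of $\U$ poses no obstacle: the only $n$ for which $\U(\tilde{p}_n)$ might be undefined are those where $\varphi^p_{\sigma(n,n)}$ is not total, but under the hypothesis $q \leqT p$ the function $\varphi^p_{i^*}$ is total, forcing $\varphi^p_{\sigma(i^*, i^*)}$ to be total precisely at the index where the contradiction is drawn.
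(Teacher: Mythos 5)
Your proof is correct, and it differs from the paper's argument in two small but genuine ways. The paper first reformulates $\DIS$ as the problem $C:\langle i,p\rangle\mapsto\{q:(\exists n)\,\varphi_i^p(n)\neq q(n)\}$ (via Proposition~\ref{prop:Sigma-ACC}) and then works entirely with $C$; for the hard direction $\NON\leqSW\widehat{C}$ it feeds in the non-diagonal instances $p_i:=\langle i,p\rangle$ and invokes the relativized Kleene recursion theorem to derive a contradiction. You instead work directly with the universal function $\U$ and bake the diagonalization into the choice of instances $\tilde p_n$ (arranged so that $\U(\tilde p_n)=\varphi^p_{\sigma(n,n)}$), so the contradiction falls out of the smn-theorem alone without ever citing the recursion theorem. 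Your route is more self-contained (no dependence on the $\DIS\equivSW C$ equivalence and no fixed-point theorem as a black box) at the cost of a slightly more elaborate construction of the instances; the paper's route keeps the instances trivial and puts the work into the recursion theorem. The easy direction is essentially identical in both: $\DIS\leqSW\NON$ because $\U(p)\leqT p$ whenever defined, $\NON$ is strongly parallelizable because $q\nleqT\langle p_0,p_1,\ldots\rangle$ implies $q\nleqT p_n$, and monotonicity of parallelization does the rest.

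One small caveat worth noting explicitly in a write-up: your $H$ must actually be computable as a function on Baire space in the sense of how $\widehat{\DIS}$ names its output. The output of $\widehat{\DIS}$ is a name for a sequence $(q_n)\in(\IN^\IN)^\IN$ using the standard product pairing, which is already exactly $\langle q_0,q_1,\ldots\rangle$, so $H$ is simply the identity on names; this is fine, but it is the kind of detail that can be overlooked when one phrases $H$ as acting on points rather than on names.
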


This result supports the slogan that ``non-computability is parallelized discontinuity''. 
In Section~\ref{sec:discontinuity-problem} with study a number of further pentagons with the discontinuity
problem at the bottom and we show that the discontinuity problem can be obtain by stashing of
several different problems.

\begin{thm}[Discontinuity as stashing]
$\DIS\equivSW\uwidehat{\LPO}\equivSW\uwidehat{\LLPO}\equivSW\uwidehat{\ACC_\IN}\equivSW\uwidehat{\AoUC_n}$ for $n\geq2$.
\end{thm}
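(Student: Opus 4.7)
The plan is to establish $\DIS \equivSW \uwidehat{f}$ separately for each $f \in \{\LPO, \LLPO, \ACC_\IN, \AoUC_n\}$ with $n \geq 2$, then close the chain by transitivity. Monotonicity of stashing together with the elementary reductions $\LLPO \leqSW \LPO, \AoUC_n, \ACC_\IN$ gives $\uwidehat{\LLPO} \leqSW$ the other three, but monotonicity does not close the full loop (since, for instance, $\AoUC_n \not\leqSW \LPO$ for $n \geq 3$ in the strong sense), so both directions of each equivalence are argued directly.

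For $\DIS \leqSW \uwidehat{f}$: given $p$ for $\DIS$, I will compute a sequence $(p_k)_{k \in \IN}$ of $f$-instances that become informative as soon as $\U(p)(k)$ converges to some value $v$. For $f = \AoUC_n$ I enumerate $\{0, \dots, n-1\} \setminus \{(v+1) \bmod n\}$ into $p_k - 1$, so the unique $\AoUC_n$-answer is $(v+1) \bmod n$, which differs from $v$ precisely because $n \geq 2$. The case $\LLPO = \AoUC_2$ is the specialization $n = 2$. For $\ACC_\IN$, I enumerate $v$ itself into $p_k - 1$, so any $\ACC_\IN$-answer differs from $v$. For $\LPO$, I design $p_k$ so that $\LPO(p_k) = 1$ iff $\U(p)(k)$ fails to converge to a positive value, and then read the bit to produce $q(k) \in \{0,1\}$ with $q(k) \neq \U(p)(k)$. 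Stashing then yields a correct answer $y_{k^*}$ at some index, so the resulting $q$ (obtained by casting the output sequence into $\IN$) satisfies $q(k^*) \neq \U(p)(k^*)$ whenever $\U(p)$ is total, and any $q$ is trivially in $\DIS(p)$ otherwise.

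For the reverse direction $\uwidehat{f} \leqSW \DIS$: given $(p_k)$, I build $z$ computably such that $\U(z)$ is a partial computable function recording the semi-decidable forbidden information at each index. For $\ACC_\IN$ I let $\U(z)(k)$ equal the unique enumerated element of $p_k$ when one appears (undefined otherwise), and decode $H(q)_k := q(k)$: then any $q \neq \U(z)$ has $q(k^*)$ outside the forbidden set of $p_{k^*}$ at some coordinate where $\U(z)(k^*)$ is defined, while at partial indices the corresponding $\ACC_\IN$-instance accepts all of $\IN$. The constructions for the other three problems follow the same template with the decoding $H$ tailored to the completion $\overline{Y}$; for $\AoUC_n$ with $n > 2$ the singleton structure of the correct-answer set requires an extra encoding trick that routes the unique valid $c_k$ into the decoded output through the inverse of the $(v+1) \bmod n$-shift used in the other direction.

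The main obstacle will be the partial case: when $\U(z)$ is undefined at some coordinate, $\DIS$ provides no constraint there, so the construction must arrange that those coordinates correspond to $f$-instances whose answer-sets are all of $Y$, letting the completion's $\bot$ values safely absorb the indeterminacy. Once each of the four equivalences is in place, transitivity delivers the full chain $\DIS \equivSW \uwidehat{\LPO} \equivSW \uwidehat{\LLPO} \equivSW \uwidehat{\ACC_\IN} \equivSW \uwidehat{\AoUC_n}$.
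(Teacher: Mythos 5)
Your outline correctly identifies the two directions and the role of the completion, and your reverse-direction idea for $\ACC_\IN$ (encode the forbidden element of $p_k$ as $\U(z)(k)$, leaving it undefined when nothing appears) is essentially the core of the paper's chain $\uwidehat{\ACC_\IN}\equivSW B\leqSW C\equivSW\DIS$. But there are two substantive gaps.

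\textbf{The forward direction $\DIS\leqSW\uwidehat{f}$ is not established by a non-adaptive $K$ followed by ``casting.''} Your $K$ sends $p$ to a sequence $(p_k)$ built once-and-for-all from $p$, and your $H$ is supposed to turn a name of $(y_k)\in\overline{Y}^\IN$ into some $q\in\IN^\IN$ with $q(k)=y_k$ whenever $y_k\in Y$. That map $\overline{Y}^\IN\to\IN^\IN$ is not computable: deciding from a name of $y_k\in\overline{Y}$ whether $y_k=\bot$ or $y_k\in Y$ is exactly the content of $\RET_Y$, which by Proposition~\ref{prop:RET} is strongly equivalent to $\AoUC_{|Y|}$ and hence discontinuous for $|Y|\geq 2$. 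Concretely, $H$ must commit to $q(0)$ after reading only finitely much of the name $\langle r_0,r_1,\dots\rangle$; if every $r_k$ is still all-dummy at that point, $H$ cannot know whether $y_0=\bot$ or whether $y_0$ will later be revealed, so its default for $q(0)$ may coincide with $\U(p)(0)$, and the same happens at every position. The paper sidesteps this entirely: it proves $\DIS\leqSW B$ by appealing to Theorem~\ref{thm:DIS}, i.e.\ by exhibiting a computable winning strategy for Player I in the Wadge game on $B$, where Player I \emph{adaptively} modifies the input in reaction to Player II's moves. That adaptivity is precisely what your fixed $K$ lacks, and the translation from a winning strategy to a Weihrauch reduction is the non-trivial content of \cite[Theorem~17, Corollary~28]{Bra20}. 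Without invoking that theorem (or supplying an equivalent adaptive device), the forward direction is not proved.

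\textbf{The reverse direction for $\LPO$ and for $\AoUC_n$, $n\geq 3$, is not covered by ``the same template.''} For $\LPO$, the value $\LPO(p_k)\in\{0,1\}$ is \emph{total}, so there is no ``undefined otherwise'' branch available to feed into $\U(z)(k)$: you cannot computably produce a $z$ with $\U(z)(k)=1-\LPO(p_k)$, because $\LPO$ is not computable. The paper's Proposition~\ref{prop:Sigma-LPO} handles this with the pairing trick in the reduction $E\leqSW C$ (consecutive positions $2n,2n+1$ of $W^p_i$ and the decoder $H(q)(2n)=1-q(n)$, $H(q)(2n+1)=q(n)$); something of this sort is genuinely needed and your description of a ``decoding $H$ tailored to the completion'' does not supply it. For $\AoUC_n$ with $n\geq 3$, if the answer set is a singleton $\{c_k\}$ then all $n-1$ other values are forbidden, so a single forbidden value $\U(z)(k)$ cannot encode that information and knowing $q(k)\neq\U(z)(k)$ does not determine $c_k$; a shift by $1\bmod n$ is a bijection but it does not collapse $n-1$ forbidden values into one. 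In fact the paper only establishes $\uwidehat{\AoUC_n}\equivW\DIS$ (ordinary Weihrauch equivalence) for $n\geq 2$, via $\AoUC_n\equivSW\RET_n\leqSW\RET_\IN$ and $\RET_\IN\leqW\LPO$ (Proposition~\ref{prop:RET} and Corollary~\ref{cor:RET}); so this part of the chain should at least be flagged, and your ``extra encoding trick'' would need to be spelled out and checked rather than asserted.

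In short, the overall decomposition into forward and reverse reductions is reasonable, but the forward direction needs the Wadge-game characterization Theorem~\ref{thm:DIS} (or a replacement for it), the $\LPO$ case of the reverse direction needs the pairing device of Proposition~\ref{prop:Sigma-LPO}, and the $\AoUC_n$ case with $n\geq 3$ needs either an explicit new argument or a retreat to $\equivW$ as in Corollary~\ref{cor:RET}.
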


In Section~\ref{sec:discontinuity-problem} we prove that this also leads to a number of further interesting 
characterizations of the discontinuity problem. 
Most notably we obtain a fully set-theoretic characterization (i.e., a characterization that does not refer to
any computability theoretic notion) with the following result.

\begin{prop}[Range-non-equality problem]
$\DIS\equivSW\NRNG$.
\end{prop}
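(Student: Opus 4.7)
The plan is to prove both reductions $\DIS\leqSW\NRNG$ and $\NRNG\leqSW\DIS$ separately.

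For $\DIS\leqSW\NRNG$, I would let $K(p)\in\IN^\IN$ be a computable ($0$-padded) enumeration of the c.e.\ set $A_p:=\{n\in\IN:\text{the computation underlying }\U(p)\text{ converges to }1\text{ on input }n\}$, obtained by dovetailing the universal simulation on $p$ and, whenever a new $n$ with output $1$ is discovered, writing $n+1$ at the next position of $K(p)$ (padding with $0$s elsewhere). Take $H$ to be a computable realizer of the inclusion $2^\IN\hookrightarrow\IN^\IN$. For $B\in 2^\IN$ with $B\neq\chi_{A_p}$ (that is, $B\in\NRNG(K(p))$), I claim $B\in\DIS(p)$ by three cases: if $\U(p)$ is undefined then every element of $\IN^\IN$ belongs to $\DIS(p)$; if $\U(p)$ is defined but does not lie in $2^\IN$ then $B\in 2^\IN$ automatically differs from $\U(p)$; and if $\U(p)\in 2^\IN$ then $\U(p)$ is total, so $A_p=\{n:\U(p)(n)=1\}$ and hence $\chi_{A_p}=\U(p)$, whence $B\neq\U(p)$.

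For $\NRNG\leqSW\DIS$, I would appeal to the previously established equivalence $\DIS\equivSW\uwidehat{\LPO}$ and reduce $\NRNG\leqSW\uwidehat{\LPO}$ instead. Given $p$, set $K(p):=\langle p_0,p_1,p_2,\ldots\rangle$ with $p_n(k):=1$ if $p(k)=n+1$ and $p_n(k):=0$ otherwise. Then $p_n=0^\IN$ iff $n+1\notin\range(p)$ iff $n\notin A_p:=\range(p-1)$, so $\LPO(p_n)=1-\chi_{A_p}(n)$. Given a $\uwidehat{\LPO}$-answer $(y_n)\in\overline{\{0,1\}}^\IN$, define $H((y_n)):=B\in 2^\IN$ coordinatewise by $B(n):=y_n$ when $y_n\in\{0,1\}$ and $B(n):=0$ when $y_n=\bot$. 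Some coordinate $n_0$ satisfies $y_{n_0}=\LPO(p_{n_0})\in\{0,1\}$, and hence $B(n_0)=1-\chi_{A_p}(n_0)\neq\chi_{A_p}(n_0)$, showing $B\neq\chi_{A_p}$.

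The main obstacle is the direction $\NRNG\leqSW\DIS$. The tempting plan of engineering $\U(K(p))=\chi_{A_p}$, which would make any $\DIS$-answer automatically a $\NRNG$-answer, fails because $\chi_{A_p}$ is not in general computable from the enumeration $p$; and if $\U(K(p))$ is forced to be undefined then any realizer $G$ of $\DIS$ is free to return an arbitrary element of $\IN^\IN$, so all control over the output is lost. The characterization $\DIS\equivSW\uwidehat{\LPO}$ sidesteps this: rather than chasing the single forbidden value $\chi_{A_p}$, one launches an infinite family of independent $\LPO$-probes asking whether each index belongs to $A_p$, and the existential guarantee of $\uwidehat{\LPO}$ produces a single correct coordinate, which suffices to certify $B\neq\chi_{A_p}$.
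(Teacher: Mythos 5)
Your first reduction $\DIS\leqSW\NRNG$ is correct and is morally the same as the paper's $E\leqSW\NRNG$ step: you encode the computation as a c.e.\ set $A_p$ so that $\range(K(p)-1)=A_p$ and $\chi_{A_p}=\U(p)$ whenever $\U(p)\in 2^\IN$, handling the degenerate cases directly. This is sound.

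The second reduction $\NRNG\leqSW\DIS$ has a genuine gap: the outer map $H:\overline{\{0,1\}}^\IN\to\{0,1\}^\IN$, ``output $y_n$ when $y_n\in\{0,1\}$ and output $0$ when $y_n=\bot$,'' is not computable; it is not even continuous. To see this, note that there are names of $1\in\overline{\{0,1\}}$ (e.g.\ $0^k2^\IN$) converging to a name of $\bot$ (namely $0^\IN$), so any realizer of $H$ would have to commit to outputting $0$ for $\bot$ while arbitrarily small perturbations of the input force the output to be $1$. Componentwise, $H$ is exactly the retraction $\RET_2$ of Definition~\ref{def:RET}, and the paper shows $\RET_2\equivSW\AoUC_2=\LLPO$ and $\widehat{\RET_2}\equivSW\WKL$ (Proposition~\ref{prop:RET}, Corollary~\ref{cor:RET}); these are strictly above $\DIS$, so your argument really shows $\NRNG\leqSW\WKL\circ\uwidehat{\LPO}$ rather than $\NRNG\leqSW\uwidehat{\LPO}$. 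The concrete failure mode is the one I described: if at the coordinate $n_0$ that witnesses correctness you guess $B(n_0)=0$ prematurely while $y_{n_0}=1$, then $B(n_0)=0=\chi_{A_p}(n_0)$ and all other coordinates may agree with $\chi_{A_p}$ as well, so $B\notin\NRNG(p)$.

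The fix is to target the problem $E$ from Definition~\ref{def:Sigma-LPO} rather than $\uwidehat{\LPO}$ itself, which is what the paper does. $E$ has output type $\{0,1\}^\IN$ directly, so no retraction is needed. Fix a G\"odel number $j$ with $\W^p_j=\range(p-1)$ for all $p$ (a machine that, on input $n$ and oracle $p$, searches for $n+1$ in $p$). Then $E\langle j,p\rangle=\{q\in\{0,1\}^\IN:q\neq\chi_{\range(p-1)}\}=\NRNG(p)$, giving $\NRNG\leqSW E\equivSW\DIS$ immediately by Proposition~\ref{prop:Sigma-LPO}. The hard work of passing from $\overline{\{0,1\}}^\IN$ to $\{0,1\}^\IN$ is then done inside the proof that $E\leqSW\DIS$, where it is handled by a trick that replaces each probe by a pair of mutually exclusive queries so that a total binary answer is always forced, precisely the step your retraction $H$ cannot do.
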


In Section~\ref{sec:Turing-cone} we introduce and study another interior operator in the Weihrauch lattice
that replaces a problem by its upper Turing cone version.

\begin{defi}[Upper Turing cone version]
Let $f:\In X\mto Y$ be a problem.
We define the {\em upper Turing cone version} $f^\DD:\In X\mto\DD$ by
$\dom(f^\DD):=\dom(f)$ and
\[f^\DD(x):=\{\deg_\mathrm{T}(q)\in\DD:(\exists y\leqT q)\;y\in f(x)\}.\]
\end{defi}

Here $\DD$ denotes the set of {\em Turing degrees}
$\deg_\mathrm{T}(p)=\{q\in\IN^\IN:q\equivT p\}$, represented by
$\delta_\DD:\IN^\IN\to\DD,p\mapsto\deg_\mathrm{T}(p)$.
If $(Y,\delta_Y)$ is a represented space, then we define
\[y\leqT q:\iff(\exists p\in\delta_Y^{-1}\{y\})\;p\leqT q\iff(\exists\mbox{ computable }F:\In\IN^\IN\to Y)\,F(q)=y.\]
Hence, $y\leqT q$ means that $y$ has a name that can be computed from $q$ and if $Y=\IN^\IN$
then this is the usual version of Turing reducibility. 
This version of Turing reducibility has also been called {\em representation reducibility}~\cite{Mil04}.

Besides the fact that $f\mapsto f^\DD$ is an interior
operator in the Weihrauch lattice, our main result in this direction shows that on parallelizable problems
the two interior operator $f\mapsto\Sigma f$ and $f\mapsto f^\DD$ coincide (up to equivalence).

\begin{prop}[Closure under upper Turing cones]
$\Sigma\widehat{f}\equivSW\widehat{f}\;^\DD$ for all problems $f$.
\end{prop}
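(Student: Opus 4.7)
The plan is to prove the two strong Weihrauch reductions separately. The direction $\widehat{f}\;^\DD\leqSW\Sigma\widehat{f}$ is essentially the identity on the output side. Given an instance $(x_k)_k$ for $\widehat{f}\;^\DD$ named by some $p$, let $K$ form the constant sequence $((x_k)_k)_n$, a legitimate instance of $\Sigma\widehat{f}$. For any realizer $G$ of $\Sigma\widehat{f}$, the value $q:=G(K(p))$ is a name of some $(Y_n)\in\overline{Y^\IN}^\IN$ with at least one component $Y_n\in\widehat{f}((x_k)_k)\subseteq Y^\IN$. Since every coordinate $Y_n$ is representationally computable from $q$ by extracting the $n$-th block and subtracting $1$, the valid (non-$\bot$) component yields a name of a genuine solution of $\widehat{f}((x_k)_k)$ computable from $q$. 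Hence $\deg_\mathrm{T}(q)\in\widehat{f}\;^\DD((x_k)_k)$, and the post-processor $H$ may be taken to be the identity.

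For the converse $\Sigma\widehat{f}\leqSW\widehat{f}\;^\DD$, the pre-processor $K$ flattens the instance $((x_{n,k})_k)_n$ via the pairing $\langle n,k\rangle$ into a single sequence $(x_{\langle n,k\rangle})$, which is a valid instance of $\widehat{f}$. A realizer of $\widehat{f}\;^\DD$ then returns some $q$ with the property that there exists an index $i^*$ such that the $i^*$-th partial computable functional on $\IN^\IN$ applied to $q$ is a name of the whole flattened solution $(y_{\langle n,k\rangle})$, where $y_{n,k}\in f(x_{n,k})$ for all $n,k$. Since $i^*$ is not given, the post-processor $H$ uses a diagonal construction: for each coordinate $n$ it simulates the $n$-th partial computable functional on $q$, extracts the $n$-th row of the alleged flattened solution, and writes the result into the $n$-th block of the output, shifted by $+1$ into the completion $\overline{Y^\IN}$. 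Whenever this simulation diverges or produces a malformed name, the shift-by-$1$ protocol harmlessly collapses that output block to $\bot$. For the true index $n=i^*$, however, the simulation delivers the genuine $i^*$-th row $(y_{\langle i^*,k\rangle})_k$, which is a valid element of $\widehat{f}((x_{i^*,k})_k)$. Thus the output sequence $(Y_n)_n$ contains at least one valid component, satisfying the stashing condition.

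The principal obstacle is that a name $q$ of a Turing degree carries no canonical pointer to the functional witnessing $y\leqT q$ for a particular solution $y$, so no uniform direct extraction of a solution from $q$ is available. The diagonal scheme circumvents this precisely because the stashing operator only demands one valid coordinate, and the completion on its output side allows the spurious coordinates (arising from all the ``wrong'' functionals) to evaluate to $\bot$ without damage. Once this is in place, the remaining manipulations (pairing and unpairing, row extraction, shifting by $\pm 1$) are standard computable operations on Baire space and involve no subtlety.
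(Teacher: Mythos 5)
Your proof is correct, and it rests on exactly the same two ideas the paper uses, just presented as one direct argument instead of being factored through intermediate results. The paper obtains the statement as Corollary~\ref{cor:Turing-cone-version} by composing Proposition~\ref{prop:Turing-cone-version} (namely $f^\DD\equivSW\Sigma f\circ I_X$, proved by exploiting precompleteness of $\delta_{\overline{Y}}$ to run all G\"odel numbers in parallel with a shift-by-$1$-and-fill-with-zeros protocol) with Lemma~\ref{lem:Sigma-Pi} (namely $\Sigma\Pi f\equivSW(\Sigma\Pi f)\circ I_{X^\IN}$, proved by interleaving via $\langle n,k\rangle$ and taking a diagonal). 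Your backward direction fuses these two diagonalizations: the output index $n$ simultaneously plays the role of the G\"odel number of the functional you simulate on $q$ and the role of the row you extract from the flattened sequence, so a single pass achieves what the paper does in two lemmas. The paper's factoring buys the reusable standalone fact $f^\DD\equivSW\Sigma f\circ I_X$; yours is more self-contained. One small point to phrase more carefully: a divergent or malformed simulation in the $n$-th block does not necessarily ``collapse to $\bot$'' --- it may just as well produce a well-formed name of a \emph{wrong} element of $Y^\IN$ --- but this is harmless because the completed representation $\delta_{\overline{Y^\IN}}$ is total, so every block names \emph{some} element of $\overline{Y^\IN}$, and only the $i^*$-th block needs to be correct. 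What precompleteness really buys you is that the $n$-th block is always defined and, for $n=i^*$, equals the genuine row.
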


As a corollary of this result we obtain the following surprising consequence (see Corollary~\ref{cor:closure-properties}).

\begin{cor}[Closure under upper Turing cones]
If $f$ is a problem that is parallelizable and stashable, then it is also closed under upper Turing cones,
i.e., $f\equivW f^\DD$.
\end{cor}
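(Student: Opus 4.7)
The plan is to chain together four equivalences, using the assumed hypotheses together with the preceding proposition. Recall that we have, for every problem $f$, the equivalence $\Sigma\widehat{f}\equivSW\widehat{f}\;^\DD$, and that both $f\mapsto\Sigma f$ and $f\mapsto f^\DD$ are interior operators, hence in particular monotone with respect to $\leqW$ (so they respect $\equivW$).

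First I would use that $f$ is parallelizable, i.e.\ $f\equivW\widehat{f}$, and apply the monotone operator $\Sigma$ on both sides to obtain $\Sigma f\equivW\Sigma\widehat{f}$. Second, I would invoke the cited proposition to rewrite the right-hand side as $\Sigma\widehat{f}\equivW\widehat{f}\;^\DD$ (the stronger $\equivSW$ suffices but is not needed here). Third, I would apply the monotone operator $(\cdot)^\DD$ to the parallelization equivalence $f\equivW\widehat{f}$ to get $f^\DD\equivW\widehat{f}\;^\DD$. Fourth, I would use that $f$ is stashable, i.e.\ $f\equivW\Sigma f$.

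Assembling the four equivalences into a single chain yields
\[
f\;\equivW\;\Sigma f\;\equivW\;\Sigma\widehat{f}\;\equivW\;\widehat{f}\;^\DD\;\equivW\;f^\DD,
\]
which is the desired conclusion $f\equivW f^\DD$.

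There is no serious obstacle: the entire content is already packed into the proposition $\Sigma\widehat{f}\equivSW\widehat{f}\;^\DD$ and the monotonicity parts of ``closure/interior operator''. The only point to be pedantic about is that monotonicity of the two operators must be invoked for $\equivW$ rather than for $\leqSW$, since the parallelizable/stashable hypotheses are phrased in the non-strong form; this is harmless because interior operators on a preorder automatically preserve the induced equivalence.
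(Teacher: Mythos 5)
Your proof is correct and follows essentially the same route the paper implicitly takes: apply monotonicity of the two interior operators to the hypotheses and chain through the key equivalence $\Sigma\widehat{f}\equivSW\widehat{f}\;^\DD$. The paper does not spell out the details (it simply states the corollary after Corollary~\ref{cor:Turing-cone-version-interior}), and your chain $f\equivW\Sigma f\equivW\Sigma\widehat{f}\equivW\widehat{f}\;^\DD\equivW f^\DD$ is exactly the intended argument.
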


This means that all such problems are essentially of computability theoretic nature.
This includes all problems that occur on the right-hand side of stashing-parallelization pentagons. 
The remarkable situation here is that parallelization and stashing are in some sense purely
set-theoretic operations (with no mention of any computability theoretic property or notion) and 
yet a combination of both generates computability-theoretic problems (starting from any problem $f$ whatsoever). 

In Section~\ref{sec:HYP} we study the {\em non-majorization problem} $\NMAJ$ as an asymmetric version of
the discontinuity problem and we investigate its pentagon. This is of interest as $\NMAJ$ parallelizes to the
hyperimmunity problem $\HYP$. 
In Section~\ref{sec:RET} we investigate the {\em retraction problem} $\RET_X$ that characterizes
the complexity of multi-valued retractions $R:\overline{X}\mto X$ and turns out to be equivalent
to $\AoUC_X$ for $X\geq 2$. We also study the corresponding pentagons.
Finally, in Section~\ref{sec:lim} we identify a phase transition related to the {\em limit avoidance problem} $\NLIM_\IN$.
This is the weakest problem known to us that does not stash away to the discontinuity problem.

\section{Stashing as Interior Operator}
\label{sec:stashing}

The main purpose of this section is to show that stashing $f\mapsto\Sigma f$ is an interior
operator for various versions of the Weihrauch lattice (see Proposition~\ref{prop:stashing}).
For this result it is essential that the completion $\overline{Y}$ of $Y$ is used on the output side
of $\Sigma f:\In X^\IN\mto\overline{Y}^\IN$. As a technical preparation we need the following lemma.
In \cite[Corollary~2.7]{BG21a} we proved that there is a computable retraction $r:\overline{\overline{Y}}\to\overline{Y}$,
which is a computable map such that $r|_{\overline{Y}}=\id_{\overline{Y}}$.
Here we will have to use a similar property of the product space $\overline{Y}^\IN$.

\begin{lem}[Retractions for product spaces]
\label{lem:product-retraceable}
For every represented space $Y$ there is a computable retraction $r:\overline{\overline{Y}^\IN}\to\overline{Y}^\IN$,
i.e., a computable $r$ with $r|_{\overline{Y}^\IN}=\id_{\overline{Y}^\IN}$.
\end{lem}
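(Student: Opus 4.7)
The plan is to construct a computable realizer $F:\IN^\IN\to\IN^\IN$ of a retraction $r:\overline{\overline{Y}^\IN}\to\overline{Y}^\IN$ with $r(y)=y$ for $y\in\overline{Y}^\IN$ and $r(\bot)=(\bot,\bot,\ldots)\in\overline{Y}^\IN$. Given $p$, either $p-1$ is infinite and decodes via the pairing as $\langle q_0,q_1,\ldots\rangle$ with $\delta_{\overline{\overline{Y}^\IN}}(p)=(\delta_{\overline{Y}}(q_n))_n$, or $p-1$ is finite and $\delta_{\overline{\overline{Y}^\IN}}(p)=\bot$. I would produce $F(p)=\langle t_0,t_1,\ldots\rangle$ via a dovetailed simulation that reads $p$ symbol by symbol, maintains a buffer of the extracted symbols of $p-1$, and keeps a FIFO queue $Q_n$ for each $n$ into which each newly extracted value $q_n(j)\geq 1$ is enqueued (entries equal to $0$ are simply discarded). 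At step $s$, for every $n\leq s$, the algorithm appends one further symbol to $t_n$: the head of $Q_n$ if non-empty, else $0$. With this schedule $t_n(k)$ is committed after step $n+k$, so $F(p)(\langle n,k\rangle)$ depends only on a finite prefix of $p$ and $F$ is computable and total.

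For correctness, since for each fixed $n$ the positions $\langle n,0\rangle<\langle n,1\rangle<\ldots$ are strictly increasing, the nonzero $q_n(j)$'s are enqueued in $Q_n$ in order of $j$. If $p-1$ is infinite, every nonzero $q_n(j)$ is eventually extracted and enqueued, and the perpetual advancement of $t_n$ ensures every queued value is dequeued; the nonzero entries of $t_n$ in order then coincide with the nonzero values of $q_n$ in order, so that $t_n-1=q_n-1$ as sequences and therefore $\delta_{\overline{Y}}(t_n)=\delta_Y(t_n-1)=\delta_Y(q_n-1)=\delta_{\overline{Y}}(q_n)$. If $p-1$ is finite, only finitely many extractions occur, so each $t_n$ ends up with only finitely many nonzero entries, $t_n-1$ is finite, and thus $\delta_{\overline{Y}}(t_n)=\bot$; accordingly $F(p)$ names $(\bot,\bot,\ldots)=r(\bot)$.

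The main obstacle is the bookkeeping of the dovetailed scheduling: every $t_n$ must be kept total while every nonzero value of $q_n$ is transferred into $t_n$ in the correct order, without any way to decide in finite time which of the two cases we are in. Once the scheduling is in place, the combinatorial identity $t_n-1=q_n-1$ drives the verification in the infinite case, and the automatic truncation to $\bot$ in the finite case aligns precisely with the chosen value $r(\bot)=(\bot,\bot,\ldots)$; the retraction property $r|_{\overline{Y}^\IN}=\id_{\overline{Y}^\IN}$ then follows because in the infinite case $F(p)$ produces a name of the very element $\delta_{\overline{\overline{Y}^\IN}}(p)$.
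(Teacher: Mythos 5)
Your proof is correct and produces a valid realizer. The route is similar in spirit to the paper's but is organized differently: the paper sets $Z:=\overline{Y}$, cites from prior work a computable retraction $R:\overline{Z}\to Z$ (i.e.\ $\overline{\overline{Y}}\to\overline{Y}$), builds a computable map $T:\overline{Z^\IN}\to\overline{Z}^\IN$ that acts as the identity on $Z^\IN$ and sends $\bot_\IN$ to $(\bot,\bot,\ldots)$, and finally takes $r:=\widehat{R}\circ T$. You instead construct a single realizer $F:\IN^\IN\to\IN^\IN$ directly from $\delta_{\overline{\overline{Y}^\IN}}$-names to $\delta_{\overline{Y}^\IN}$-names, doing the zero-padding and one level of ``strip a $+1$'' at once via the per-component queues $Q_n$. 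Both arguments hinge on the same two facts: that $\delta_{\overline{Y}}$ is total (so any element of $\IN^\IN$ is a legitimate name for some point of $\overline{Y}$), and that $\delta_{\overline{Y}}(t)$ depends only on the nonzero content $t-1$, so the nonzero symbols may be freely rescheduled with $0$-padding. Your observation that the Cantor pairing has $\langle n,0\rangle<\langle n,1\rangle<\cdots$, so that symbols for each queue $Q_n$ arrive in order, is exactly what makes $t_n-1=q_n-1$ and thus $\delta_{\overline{Y}}(t_n)=\delta_{\overline{Y}}(q_n)$. What the paper's factorization buys is modularity: the combinatorics of ``shuttle nonzero content, pad with zeros'' is isolated in the single-space lemma from \cite{BG21a}, and $T$ only has to repack a name without stripping a layer. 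What your version buys is self-containment and explicitness --- no appeal to the cited retraction, and the totality/computability of $F$ is visible from the schedule (symbol $t_n(k)$ is fixed after step $n+k$). Either works; yours is fine as written.
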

\begin{proof}
The space $Z:=\overline{Y}$ is represented by a total representation $\delta_Z=\delta_{\overline{Y}}$ and 
by \cite[Corollary~2.7]{BG21a} there is a computable retraction $R:\overline{Z}\to Z$.
We assume that $\overline{Z^\IN}$ is represented by the completion $\delta_{\overline{Z^\IN}}$ of the usual product representation $\delta_{Z^\IN}$.
We claim that there is a computable map $T:\overline{Z^\IN}\to\overline{Z}^\IN$ with $T|_{Z^\IN}=\id_{Z^\IN}$.
We assume that $\overline{Z^\IN}=Z^\IN\cup\{\bot_\IN\}$ and $\overline{Z}=Z\cup\{\bot\}$.
The names of $\bot_\IN$ with respect to $\delta_{\overline{Z^\IN}}$ are exactly those names that contain only finitely many digits different from $0$
(since $\delta_Z$ and hence $\delta_{Z^\IN}$ are total). Now $T$ can be realized in a computable way by interpreting the non-zero content
of a given name $p$ as a name $\langle p_0,p_1,p_2,...\rangle$ of a point in $\overline{Z}^\IN$ with respect to $\delta_{\overline{Z}^\IN}$.
As long as no non-zero content is available in $p$, the names $p_i$ are filled up by zeros. Altogether this shows that $T$ is computable and it acts
as the identity with the exception that $T(\bot_\IN)=(\bot,\bot,\bot,...)$. Now $r:\overline{Z^\IN}\to Z^\IN$ with $r:=\widehat{R}\circ T$
is the desired computable retraction for $Z^\IN$. 
\end{proof}

Now we are prepared to prove that stashing is an interior operator on the (strong) Weihrauch lattice.
Properties (1) and (2) in the following result are only made possible by the usage of the completion $\overline{Y}$, whereas
the existence of a retraction according to Lemma~\ref{lem:product-retraceable} guarantees that the
completion is not an obstacle for property (3). For a problem ${f:\In X\mto Y}$ the {\em completion} is defined
by
\[\overline{f}:\overline{X}\mto\overline{Y},x\mapsto\left\{\begin{array}{ll}
f(x) & \mbox{if $x\in\dom(f)$}\\
\bot & \mbox{otherwise}
\end{array}\right.
\]
This completion was introduced and studied in \cite{BG20,BG21a} and we will use it in part (2) of the following proof.

\begin{prop}[Stashing]
\label{prop:stashing}
The stashing operation $f\mapsto\Sigma f$ is an interior operator for $\leqSW$, $\leqW$, $\leq_{\rm W}^*$ and $\leq_{\rm sW}^*$.
That is, for all problems $f,g$:
\begin{enumerate}
\item $\Sigma f\leqSW f$,
\item $f\leqSW g\TO \Sigma f\leqSW \Sigma g$,
\item $\Sigma f\leqSW\Sigma\Sigma f$.
\end{enumerate}
Analogous statements hold for $\leqW$, $\leq_{\rm W}^*$ and $\leq_{\rm sW}^*$.
\end{prop}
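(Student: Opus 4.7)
The plan is to establish the three clauses in sequence; the role of the completion $\overline{Y}$ on the output side of $\Sigma f$ is what makes each of them go through. I would give the argument uniformly for $\leqSW$, since the same reductions witness the statements for $\leqW$, $\leq_{\rm W}^*$ and $\leq_{\rm sW}^*$ with only the obvious change of ``computable'' to ``continuous''.

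For clause (1), the reduction $\Sigma f \leqSW f$ is almost immediate once the completion is available: the forward reduction projects a name $\langle p_0, p_1, \ldots\rangle$ of $(x_n) \in \dom(\Sigma f)$ to its first component $p_0$, and given a $\delta_Y$-name of some $y_0 \in f(x_0)$ produced by a realizer of $f$, the backward reduction converts it to a $\delta_{\overline{Y}}$-name (by incrementing every digit by one) in the first slot and pads the remaining slots with the constant-zero sequence $000\ldots$, which is a canonical $\delta_{\overline{Y}}$-name of $\bot$. This codes $(y_0, \bot, \bot, \ldots) \in \overline{Y}^\IN$, a valid solution of $\Sigma f$ at $(x_n)$ because $y_0 \in f(x_0)$.

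For clause (2), suppose $f \leqSW g$ via computable $H, K$. I would lift the reduction componentwise: the forward map applies $K$ to each coordinate of $\langle p_n\rangle$, producing an instance of $\Sigma g$ (since $K$ carries $\delta_X$-names of $\dom(f)$-points to $\delta_W$-names of $\dom(g)$-points). For the output side, one needs a computable map $H_1$ that sends a $\delta_{\overline{Z}}$-name $q$ to a $\delta_{\overline{Y}}$-name in such a way that $H_1$ tracks $H$ on the ``non-$\bot$'' part and sends $\bot$ to $\bot$. This $H_1$ is built by dovetailing: read the input completion-name, recover $q - 1$ incrementally, simulate $H$ on what has been recovered so far, and output the produced digits of $H(q - 1)$ each incremented by $1$, interleaved with zero padding. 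The computability of $H_1$ is essentially the monotonicity of the completion operation under $\leqSW$ established in \cite{BG20,BG21a}. Applying $H_1$ componentwise to any $\Sigma g$-solution converts the good coordinate $z_{n_0} \in g(w_{n_0})$ into a valid $y_{n_0} \in f(x_{n_0})$ in the corresponding slot, yielding a solution of $\Sigma f$.

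For clause (3), the interesting direction is $\Sigma f \leqSW \Sigma\Sigma f$, which I would handle by packing and unpacking via a computable bijection $\langle\cdot,\cdot\rangle \colon \IN \times \IN \to \IN$. The forward reduction repackages $(x_i)_i$ as the double sequence $(x_{\langle n, m\rangle})_{n,m} \in \dom(\Sigma\Sigma f) = (\dom(f)^\IN)^\IN$. A $\Sigma\Sigma f$-solution is $(v_n)_n \in \overline{\overline{Y}^\IN}^\IN$ whose good coordinate $v_{n_0} \in \overline{Y}^\IN$ carries, at some position $m_0$, an element of $f(x_{\langle n_0, m_0\rangle})$. The backward reduction applies the computable retraction $r \colon \overline{\overline{Y}^\IN} \to \overline{Y}^\IN$ of Lemma~\ref{lem:product-retraceable} componentwise, then flattens the resulting double sequence via $\langle\cdot,\cdot\rangle^{-1}$ into a sequence in $\overline{Y}^\IN$. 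Because $r$ is the identity on $\overline{Y}^\IN$, we have $r(v_{n_0}) = v_{n_0}$, so the embedded solution is preserved at coordinate $\langle n_0, m_0\rangle$ of the flattened output. The main technical obstacle I expect is the bookkeeping around the completion: in clause (2), the explicit dovetailed construction of $H_1$ (or, equivalently, a clean invocation of the completion-monotonicity lemma), and in clause (3), the verification that the retraction $r$ never destroys the embedded solution, which reduces precisely to its defining property of fixing $\overline{Y}^\IN$ pointwise.
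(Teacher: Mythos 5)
Your outline for clauses (1) and (3) matches the paper's argument closely: the projection to the first coordinate plus $\bot$-padding for (1), and interleaving via a pairing bijection plus the retraction $r:\overline{\overline{Y}^\IN}\to\overline{Y}^\IN$ of Lemma~\ref{lem:product-retraceable} for (3). For (2) with $\leqSW$, your dovetailed $H_1$ is a realizer-level description of the same phenomenon the paper captures abstractly by invoking computability of the completion $\overline{H}$ of the outer multi-valued function; the two are substantively the same.

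The genuine gap is your opening claim that ``the same reductions witness the statements for $\leqW$, $\leq_{\rm W}^*$ and $\leq_{\rm sW}^*$ with only the obvious change of `computable' to `continuous'.'' That trade (computable $\to$ continuous) is exactly the relation between $\leqSW$ and $\leq_{\rm sW}^*$, and between $\leqW$ and $\leq_{\rm W}^*$; it is \emph{not} the relation between $\leqSW$ and $\leqW$. For clauses (1) and (3) this does not matter, because there the conclusion is of the form $\Sigma f\leqSW(\cdot)$ and a strong reduction automatically yields the other three --- the paper says exactly this. But clause (2) has a reducibility in the \emph{hypothesis}: if you only assume $f\leqW g$, the witnessing outer function $H$ consumes $\langle p, GK(p)\rangle$, not merely $GK(p)$, and your construction of $H_1$ as a map $\overline{Z}\to\overline{Y}$ no longer applies. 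The completion now has to be taken over a product $\IN^\IN\times Z$, and $\overline{\IN^\IN\times Z}$ is not literally $\IN^\IN\times\overline{Z}$; the paper bridges this with an explicit computable map $\iota:\IN^\IN\times\overline{Z}\to\overline{\IN^\IN\times Z}$ restricting to the identity on $\IN^\IN\times Z$, and then completes and parallelizes $H\circ\iota$. Your dovetailing picture can in fact be upgraded to cover this --- $H_1$ just needs to absorb the original component name $p_n$ alongside the candidate $q_n$ --- but as written you have not done it, and you have incorrectly characterized the remaining work as the trivial continuous relativization. This is precisely the one place the paper warns that the reducibilities cannot be treated uniformly.
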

\begin{proof}
If we prove (1) and (3) for $\leqSW$, then this implies the corresponding statements for $\leq_\mathrm{sW}^*$, $\leqW$ and $\leq_\mathrm{W}^*$.
Only in the case of (2) we explicitly need to consider the different types of reducibilities.\\
(1) We consider the computable projection $K:X^\IN\to X,(x_n)\mapsto x_0$
and the computable function $H:Y\to\overline{Y}^\IN,y\mapsto(y,\bot,\bot,...)$.
Here $H$ is computable, since the embedding ${\iota:Y\to\overline{Y}}$ is computable according to \cite[Corollary~3.10]{BG20}
and the element $\bot\in\overline{Y}$ is computable too.
These two functions $K,H$ witness the reduction $\Sigma f\leqSW f$, i.e., 
$HfK(x_n)\in\Sigma f(x_n)$ for all $(x_n)\in\dom(\Sigma f)$.
We note that the usage of the completion $\overline{Y}$ guarantees the existence of computable default outputs $\bot\in\overline{Y}$.\\
(2) We consider problems $f:\In X\mto Y$ and $g:\In W\mto Z$.
If $f\leqSW g$, then there are computable $K:\In X\mto W$ and $H:\In Z\mto Y$
such that $\emptyset\not=HgK(x)\In f(x)$ for all $x\in\dom(f)$~\cite[Proposition~3.2]{BGP21}.
Then the completion $\overline{H}:\overline{Z}\mto\overline{Y}$ is computable by \cite[Proposition~4.9]{BG20} and so are the 
parallelizations $\widehat{\overline{H}}:\overline{Z}^\IN\mto\overline{Y}^\IN$ and $\widehat{K}:\In X^\IN\mto W^\IN$. We obtain
\begin{eqnarray*}
\emptyset&\not=&\widehat{\overline{H}}\circ\Sigma g\circ\widehat{K}((x_n)_n)\\
&=& \widehat{\overline{H}}\{(z_n)_n\in\overline{Z}^\IN:(\exists n)\;z_n\in gK(x_n)\}\\
&=&\{(y_n)\in \overline{Y}^\IN:(\exists n)\;y_n\in HgK((x_n)_n)\}\In\Sigma f((x_n)_n)
\end{eqnarray*}
for all $(x_n)_n\in\dom(\Sigma f)$. 
We note that the totality of the completion $\overline{H}$ guarantees that also those components of $(z_n)_n\in\Sigma g\circ \widehat{K}((x_n)_n)\In\overline{Z}^\IN$ are processed, which are not in the domain of $H$.
Altogether, this proves $\Sigma f\leqSW\Sigma g$.\\
Let now $f\leqW g$ hold with computable $K:\In X\mto \IN^\IN\times W$ and $H:\In \IN^\IN\times Z\mto Y$ according to~\cite[Lemma~2.5]{BP18},
i.e., $\emptyset\not=H\circ(\id_{\IN^\IN}\times g)\circ K(x)\In f(x)$ for all $x\in\dom(f)$.
Then, again, the completion $\overline{H}:\overline{\IN^\IN\times Z}\mto\overline{Y}$ is computable.
By \cite[Proposition~3.8, Corollary~3.10]{BG20} there is a computable $\iota:\IN^\IN\times\overline{Z}\to\overline{\IN^\IN\times Z}$ 
with $\iota(p,z)=(p,z)$ for all $(p,z)\in \IN^\IN\times Z$.
Then $\widehat{(\overline{H}\circ\iota)}:(\IN^\IN\times\overline{Z})^\IN\mto\overline{Y}^\IN$ can also be considered
as a problem of type $H':(\IN^\IN)^\IN\times\overline{Z}^\IN\mto\overline{Y}^\IN$ and $\widehat{K}$ can be seen as a problem of type $\widehat{K}:\In X^\IN\mto(\IN^\IN)^\IN\times W^\IN$.
As above we obtain 
\[\emptyset\not=H'\circ(\id_{(\IN^\IN)^\IN}\times\Sigma g)\circ\widehat{K}((x_n)_n)\In\Sigma f((x_n)_n),\]
for all $(x_n)_n\in\dom(\Sigma f)$, i.e., $\Sigma f\leqW\Sigma g$.
The statements for $\leq_\mathrm{sW}^*$ and $\leq_\mathrm{W}^*$ can be proved analogously with continuous $K,H$.
We note that by \cite[Proposition~4.9]{BG20} $\overline{H}$ is continuous, if $H$ is so.\\
(3) We consider $\Sigma\Sigma f:\In (X^\IN)^\IN\mto\overline{\overline{Y}^\IN}^\IN$.
By Lemma~\ref{lem:product-retraceable} there is a computable retraction $r:\overline{\overline{Y}^\IN}\to\overline{Y}^\IN$.
For every represented space $X$ the map 
\[s_X:(X^\IN)^\IN\to X^\IN,((x_{n,k})_k)_n\mapsto(x_{n,k})_{\langle n,k\rangle}\] 
that interleaves a double sequence in a single sequence, is a computable isomorphism, i.e., it is bijective and computable and its inverse is computable too.
We now consider the computable functions $K:=s_X^{-1}$ and $H:=s_Y\circ\widehat{r}:\overline{\overline{Y}^\IN}^\IN\to\overline{Y}^\IN$.
Then we obtain
\begin{eqnarray*}
\emptyset &\not=& H\circ\Sigma\Sigma f\circ K((x_{n,k})_{\langle n,k\rangle})\\
&=& s_Y\circ \widehat{r}\{(z_n)_n\in\overline{\overline{Y}^\IN}^\IN:(\exists n)\;z_n\in\Sigma f((x_{n,k})_k)\}\\
&=& s_Y\{((y_{n,k})_k)_n\in(\overline{Y}^\IN)^\IN:(\exists n)(\exists k)\;y_{n,k}\in f(x_{n,k})\}\\
&=& \{(y_{n,k})_{\langle n,k\rangle}\in\overline{Y}^\IN:(\exists\langle n,k\rangle)\;y_{n,k}\in f(x_{n,k})\}\\
&=&\Sigma f((x_{n,k})_{\langle n,k\rangle}).
\end{eqnarray*}
This proves $\Sigma f\leqSW\Sigma\Sigma f$.
\end{proof}

We can conclude from property (2) in Proposition~\ref{prop:stashing} that stashing is, in particular, invariant under (strong) Weihrauch reducibility
and can hence be seen as an operation on (strong) Weihrauch degrees.

\begin{cor}
Stashing can be extended to an operation on (strong) Weihrauch degrees.
\end{cor}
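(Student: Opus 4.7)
The plan is to read off the corollary directly from the monotonicity clause, item~(2) of Proposition~\ref{prop:stashing}, which is already established in the excerpt. The only thing to verify is that the assignment $f\mapsto\Sigma f$ descends from problems to equivalence classes, i.e., that it respects $\equivSW$ (and analogously $\equivW$, $\equiv_{\mathrm{W}}^*$, $\equiv_{\mathrm{sW}}^*$).

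First, I would fix two problems $f$ and $g$ with $f\equivSW g$, which by definition means $f\leqSW g$ and $g\leqSW f$. Applying monotonicity (Proposition~\ref{prop:stashing}(2)) to each direction gives $\Sigma f\leqSW\Sigma g$ and $\Sigma g\leqSW\Sigma f$, hence $\Sigma f\equivSW\Sigma g$. Consequently, the rule $[f]_{\mathrm{sW}}\mapsto[\Sigma f]_{\mathrm{sW}}$ is a well-defined map on the set of strong Weihrauch degrees, so stashing extends to an operation on that set. Repeating the argument verbatim for $\leqW$, $\leq_{\mathrm{W}}^*$, and $\leq_{\mathrm{sW}}^*$ (all of which are handled by Proposition~\ref{prop:stashing}(2)) yields the corresponding extensions to Weihrauch, topological Weihrauch, and topological strong Weihrauch degrees.

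There is no real obstacle here: the statement is the standard observation that every monotone operation on a preordered set induces an operation on the quotient partially ordered set. The only point worth being careful about is to note that representatives on Baire space suffice for the set-theoretic issue of forming the quotient (as mentioned in the footnote following the definition of a preordered set), so the induced map is genuinely a function on a set of degrees rather than on a proper class. Once that is observed, the proof is a one-line application of monotonicity and can be stated in a couple of sentences.
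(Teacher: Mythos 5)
Your argument is exactly what the paper intends: the corollary is stated as an immediate consequence of the monotonicity clause, Proposition~\ref{prop:stashing}(2), and you correctly apply it in both directions of a (strong) Weihrauch equivalence to conclude that $\Sigma$ respects the equivalence and hence descends to degrees. This matches the paper's (one-line) reasoning, and your remark about the set-theoretic care with representatives on Baire space is a sound addition.
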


\section{The Stashing-Parallelization Monoid}
\label{sec:monoid}

We adopt the convention that we denote the parallelization of a problem $f$ by $\widehat{f}$ and the
stashing by $\uwidehat{f}$ when we deal with single applications of these operators. However, 
for iterated applications it is useful to use the notation $\Pi f$ and $\Sigma f$ instead.

The closure and interior operators $\Pi$ and $\Sigma$ generate a monoid $\{\Pi,\Sigma\}^*$ under composition
and we want to study the action of this monoid on the Weihrauch lattice. 
To this end, it is worth spelling out the problems $\Sigma\Pi f$ and $\Pi\Sigma f$ explicitly:
\begin{enumerate}
\item $\Sigma\Pi f:\In X^{\IN\times\IN}\mto\overline{Y^\IN}^\IN,(x_{n,k})\mapsto\{(y_{n,k}):(\exists n)(\forall k)\;y_{n,k}\in f(x_{n,k})\}$,
\item $\Pi\Sigma f:\In X^{\IN\times\IN}\mto\overline{Y}^{\IN\times\IN},(x_{n,k})\mapsto\{(y_{n,k}):(\forall n)(\exists k)\;y_{n,k}\in f(x_{n,k})\}$.
\end{enumerate}

We can see that stashing corresponds to a usage of an existential quantifier whereas parallelization corresponds
to a usage of a universal quantifier in a certain sense. Hence $\Sigma\Pi$ and $\Pi\Sigma$ correspond to applications
of these quantifiers in different order.

There is a subtle technical point here: we have to deal with the spaces $\overline{Y}^\IN$ and $\overline{Y^\IN}$,
which are not computably isomorphic. We recall that $\overline{Y}^\IN=(Y\cup\{\bot\})^\IN$, whereas
$\overline{Y^\IN}=Y^\IN\cup\{\bot_\IN\}$. Hence, formally there is no subset relation between these two
sets. In order to make the latter a subset of the former, we can choose $\bot_\IN:=(\bot,\bot,...)$, as implicitly done in the proof of Lemma~\ref{lem:product-retraceable}.
In this sense the double sequence notation $(y_{n,k})$ in the description of $\Sigma\Pi f$ should be understood.

Besides the retraction $r$ from Lemma~\ref{lem:product-retraceable} we also need the maps $s,t$ that exist according to the following lemma.
Intuitively speaking, $s$ maps every sequence that contains a $\bot$ to $\bot_\IN$ and $t$ maps
$\bot_\IN$ to some sequence that contains a $\bot$ 
(which one it is, might depend on the given name of $\bot_\IN$).

\begin{lem}[Completion of product spaces]
\label{lem:completion-product}
For every represented space $Y$ there are computable 
$s:\overline{Y}^\IN\to\overline{Y^\IN}$ and $t:\overline{Y^\IN}\mto\overline{Y}^\IN$
such that $s|_{Y^\IN}=t|_{Y^\IN}=\id_{Y^\IN}$.
\end{lem}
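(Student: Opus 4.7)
The plan is to construct $s$ and $t$ by direct manipulation of names, exploiting the slack of the $(\cdot)-1$ operation: since $0-1=\varepsilon$, one can freely intersperse $0$'s without changing the point represented. In particular, given any sequence $r\in\IN^\IN\cup\IN^*$ that I can produce digit by digit, a ``pad with $0$'s while waiting'' device outputs some $q\in\IN^\IN$ with $q-1=r$ (as a possibly finite sequence). This one trick will be the engine of both constructions.

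For $s$ I would set $s((y_n)):=(y_n)$ when $(y_n)\in Y^\IN$ and $s((y_n)):=\bot_\IN$ otherwise, and realize $s$ as follows. Given a $\delta_{\overline{Y}^\IN}$-name $p=\langle p_0,p_1,\ldots\rangle$, each $p_n-1$ is a valid $\delta_Y$-name of $y_n$ when $y_n\in Y$ and fails to be one when $y_n=\bot$. I dovetail over the Cantor pairs $\langle n,k\rangle$ to produce the digits of $r:=\langle p_0-1,p_1-1,\ldots\rangle$ in order, and apply the padding device to obtain $q$ with $q-1=r$. If all $y_n\in Y$ then $r$ is a valid $\delta_{Y^\IN}$-name of $(y_n)$, so $q$ names $(y_n)\in Y^\IN\In\overline{Y^\IN}$; otherwise either the production of $r$ stalls (leaving $q-1$ finite) or $q-1=r$ is infinite but fails to be a valid $\delta_{Y^\IN}$-name, and in either case $q$ names $\bot_\IN$.

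For $t$ I would proceed symmetrically, but producing each component $p_n$ of the output $p=\langle p_0,p_1,\ldots\rangle$ independently. Given a $\delta_{\overline{Y^\IN}}$-name $q$, I read $q-1$ at the Cantor positions $\langle n,k\rangle$ and apply the padding device separately to produce each $p_n$. Independence of the components is essential so that a stall in one component does not block the others. If $q$ names $(y_n)\in Y^\IN$, then every $p_n-1$ is a valid $\delta_Y$-name of $y_n$ and $p$ names $(y_n)\in\overline{Y}^\IN$; if $q$ names $\bot_\IN$, the output necessarily lies in $\overline{Y}^\IN\setminus Y^\IN$, so taking $t(\bot_\IN)$ to be the (nonempty) image of the realizer on such inputs makes $t$ a well-defined multi-valued map with $t|_{Y^\IN}=\id_{Y^\IN}$.

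The only real verification needed is that the padding device realizes $q-1=r$, which is immediate from the identities $(d+1)-1=d$ and $0-1=\varepsilon$. The main bookkeeping obstacle is the case analysis (whether each $y_n$ lies in $Y$ or equals $\bot$, and, in the $\bot$ case, whether the relevant $p_n-1$ is finite or merely invalid), together with ensuring component-wise independence in the construction of $t$; but once these are spelled out, no further difficulty arises.
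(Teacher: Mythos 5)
Your proof is correct, and for $s$ it follows essentially the same route as the paper: both realize $s$ by interleaving the sequences $p_n - 1$ in Cantor order, padding with zeros while waiting, so that any stalled or invalid component forces the output to be a name of $\bot_\IN$. The case analysis you give (stall versus infinite-but-invalid) is exactly what makes the single-valued $s$ well-defined.

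For $t$ you take a genuinely different route. The paper dispatches $t$ abstractly: it notes that $\delta_{\overline{Y}^\IN}$ is total and precomplete (citing Proposition~3.8 of \cite{BG20}) and then invokes a general extension lemma (Proposition~2.6 of \cite{BG21a}) to extend the parallelized embedding $\widehat{\iota}:Y^\IN\to\overline{Y}^\IN$ to a total computable multi-valued map on $\overline{Y^\IN}$. You instead build $t$ explicitly: decode $q-1$ componentwise along the Cantor columns $\langle n,\cdot\rangle$, run an independent padding device per component so that a stall in one $r_n$ does not block the others, and define $t(\bot_\IN)$ as the realizer's image on names of $\bot_\IN$. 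This is the right precaution --- component independence is precisely why one cannot simply reuse the construction of $s$ in reverse --- and your observation that on names of $\bot_\IN$ the output must escape $Y^\IN$ is a nice bonus, though not strictly needed since all the lemma requires is $t|_{Y^\IN}=\id_{Y^\IN}$. The trade-off is exactly what one would expect: your version is self-contained and shows concretely why precompleteness of $\delta_{\overline{Y}^\IN}$ holds in this instance, at the cost of the bookkeeping about Cantor positions, non-zero counts, and padding schedules; the paper's version is shorter but leans on two cited results. Both are valid.
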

\begin{proof}
A suitable computable map $s:\overline{Y}^\IN\to\overline{Y^\IN}$ with $s|_{Y^\IN}=\id|_{Y^\IN}$ 
is realized by a computable $F:\IN^\IN\to\IN^\IN$ with the property that $F\langle p_0,p_1,p_2,...\rangle-1=\langle q_0,q_1,q_2,...\rangle$ with $q_i=p_i-1$ for all $p_i$ with $p_i-1\in\IN^\IN$.
This can be achieved by copying the non-zero content of $p_i$ subtracted by $1$ into the $q_i$, where the resulting sequence $\langle q_0,q_1,q_2,...\rangle$ is filled up by zeros, whenever necessary (i.e., whenever
no non-zero content is available for some $p_i$ then the entire output is filled up only with zeros as long as no non-zero content appears). That is, if one of the $p_i$ is a name of $\bot\in\overline{Y}$
(either because it has only finitely many digits different from zero or because $p_i-1\not\in\dom(\delta_Y)$), then $F\langle p_0,p_1,...\rangle$
is a name of $\bot_\IN\in\overline{Y^\IN}$.
In this way, $F$ realizes the identity on $Y^\IN$.

For the second part of the statement, we note that $\overline{Y}^\IN$ has a precomplete and total representation by \cite[Proposition~3.8]{BG20}
and hence we can extend the parallelization of the computable embedding $Y\into\overline{Y}$ to a computable
problem $t:\overline{Y^\IN}\mto\overline{Y}^\IN$ with $t|_{Y^\IN}=\id_{Y^\IN}$ by~\cite[Proposition~2.6]{BG21a}.
\end{proof}

Our core observation on the action of the monoid $\{\Pi,\Sigma\}^*$ on the (strong) Weihrauch 
lattice is captured by the following result.

\begin{prop}[Action of the stashing-parallelization monoid]
\label{prop:monoid}
For every problem $f$ we obtain:
\begin{enumerate}
\item $\Pi\Sigma f\leqSW\Sigma\Pi f$,
\item $\Pi\Sigma\Pi f\equivSW\Sigma\Pi f$,
\item $\Sigma\Pi\Sigma f\equivSW\Pi\Sigma f$.
\end{enumerate}
\end{prop}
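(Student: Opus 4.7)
The plan is to prove (1) directly by a transposition argument, then obtain (2) and (3) as routine formal consequences of (1) together with the closure and interior axioms for $\Pi$ and $\Sigma$ (Fact~\ref{fact:parallelization} and Proposition~\ref{prop:stashing}).

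For (1), the intuition is that the stronger quantifier pattern $(\exists n)(\forall k)$ of $\Sigma\Pi f$ can witness the weaker pattern $(\forall n)(\exists k)$ of $\Pi\Sigma f$ once the roles of the two indices have been swapped. Concretely, I would define the preprocessing $K:X^{\IN\times\IN}\to X^{\IN\times\IN}$, $(x_{n,k})\mapsto(x_{k,n})$, which is a computable isomorphism. Feeding $K(x_{n,k})$ into $\Sigma\Pi f$ produces an output $(y_{n,k}) \in \overline{Y^\IN}^\IN$ for which there exists an $n_0$ with $y_{n_0,k}\in f(x_{k,n_0})$ for every $k$. To obtain a solution of $\Pi\Sigma f$ on $(x_{n,k})$, I would apply the computable multi-valued map $t:\overline{Y^\IN}\mto\overline{Y}^\IN$ from Lemma~\ref{lem:completion-product} component-wise via $\widehat{t}:\overline{Y^\IN}^\IN\mto(\overline{Y}^\IN)^\IN=\overline{Y}^{\IN\times\IN}$, and then transpose indices once more, setting $z_{n,k}:=(\widehat{t}(y))_{k,n}$. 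Because $t$ acts as the identity on $Y^\IN$, the distinguished row $n_0$ is left intact, so $z_{n,n_0}=y_{n_0,n}\in f(x_{n,n_0})$, witnessing the existential $(\exists k)$ in row $n$ by the uniform choice $k:=n_0$. The main obstacle is purely bookkeeping: verifying the type conversion between $\overline{Y^\IN}^\IN$ and $\overline{Y}^{\IN\times\IN}$ and checking that the multi-valuedness of $t$ does not interfere with the row that actually contains solutions.

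For (2), both directions are short. The reduction $\Sigma\Pi f\leqSW\Pi\Sigma\Pi f$ is immediate from extensivity of $\Pi$ (Fact~\ref{fact:parallelization}) applied to $g:=\Sigma\Pi f$. For the reverse direction, I apply (1) with $\Pi f$ in place of $f$ to get $\Pi\Sigma(\Pi f)\leqSW\Sigma\Pi(\Pi f)$, and then use idempotence of parallelization ($\Pi\Pi f\equivSW\Pi f$) together with monotonicity of $\Sigma$ (Proposition~\ref{prop:stashing}(2)) to conclude $\Sigma\Pi\Pi f\equivSW\Sigma\Pi f$.

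For (3), the reduction $\Sigma\Pi\Sigma f\leqSW\Pi\Sigma f$ is immediate from the interior property $\Sigma g\leqSW g$ (Proposition~\ref{prop:stashing}(1)) applied to $g:=\Pi\Sigma f$. For the reverse direction $\Pi\Sigma f\leqSW\Sigma\Pi\Sigma f$, I apply (1) with $\Sigma f$ in place of $f$ to obtain $\Pi\Sigma(\Sigma f)\leqSW\Sigma\Pi(\Sigma f)$, and use $\Sigma\Sigma f\equivSW\Sigma f$ (which follows by combining (1) and (3) of Proposition~\ref{prop:stashing}) together with monotonicity of $\Pi$ to replace $\Pi\Sigma\Sigma f$ by $\Pi\Sigma f$ on the left. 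All nontrivial content is thus concentrated in (1); (2) and (3) are formal consequences of (1) and the operator axioms.
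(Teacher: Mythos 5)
Your proposal is correct and follows essentially the same route as the paper: a transposition argument for (1) that exploits the $\exists\forall\Rightarrow\forall\exists$ implication, using the map $t$ from Lemma~\ref{lem:completion-product} (parallelized) to handle the type mismatch between $\overline{Y^\IN}^\IN$ and $\overline{Y}^{\IN\times\IN}$, and then (2) and (3) derived purely formally from (1) and the closure/interior operator axioms, just as in the paper.
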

\begin{proof}
(1) 
Given an instance $(x_{n,k})$ of $\Pi\Sigma f$, we just swap $n$-- with $k$--positions in $(x_{n,k})$, 
then we apply $\Sigma\Pi f$ to the result, then we use the parallelization of the problem $t:\overline{Y^\IN}\mto\overline{Y}^{\IN}$ 
from Lemma~\ref{lem:completion-product} in order to convert the output of $\Sigma\Pi f$ from $(\overline{Y^\IN})^\IN$ into a double
sequence in $(\overline{Y}^\IN)^\IN$ and then we swap the $n$-- and $k$--positions again to
obtain a result in $\Pi\Sigma f(x_{n,k})$, which is correct because
\[(\exists k)(\forall n)\;y_{n,k}\in f(x_{n,k})\TO(\forall n)(\exists k)\;y_{n,k}\in f(x_{n,k}).\]
(2) Since $\Pi$ is a closure operator we have $\Sigma\Pi f\leqSW\Pi\Sigma\Pi f$ and $\Pi\Pi f\leqSW\Pi f$, and
by (1) and monotonicity of $\Sigma$ we obtain $\Pi\Sigma\Pi f\leqSW\Sigma\Pi\Pi f\leqSW\Sigma\Pi f$.\\
(3) Since $\Sigma$ is an interior operator we have $\Sigma\Pi\Sigma f\leqSW\Pi\Sigma f$ and $\Sigma f\leq\Sigma\Sigma f$, and 
by (1) and monotonicity of $\Pi$ we obtain
$\Pi\Sigma f\leqSW\Pi\Sigma\Sigma f\leqSW\Sigma\Pi\Sigma f$.
\end{proof}

That is the action of the stashing-parallelization monoid $\{\Pi,\Sigma\}^*$ on a problem $f$ in 
the (strong) Weihrauch lattice leads to at most five distinct degrees $\{f,\Pi f,\Sigma f,\Sigma\Pi f,\Pi\Sigma f\}$,
which are arranged in a pentagon, see the diagram in Figure~\ref{fig:Pi-Sigma-pentagon}.
Every line in the diagram indicates a $\leqSW$--reduction in the upwards direction.

Of course, if a problem $f$ is parallelizable and stashable (such as any problem of the form $f=\Sigma\Pi g$ is),
then the pentagon reduces to a single degree. Other smaller sizes than five can be realized too and,
as we will see, also the maximal size of five can be realized. In any case, the {\em stashing-parallelization pentagon}
can be seen as the {\em trace} of $f$ under $\{\Sigma,\Pi\}^*$ 
that reveals some information about the underlying problem $f$.
It follows from Proposition~\ref{prop:monoid} and the fact that $\Pi$ and $\Sigma$ are closure and interior operators, respectively,
that for a problem $f$ with a full pentagon of size five, $f$ is incomparable with the opposite problems $\Sigma\Pi f$ and $\Pi\Sigma f$.

\begin{cor}[Full pentagons]
\label{cor:full-pentagon}
For every problem $f$ we obtain:
\begin{enumerate}
\item $f\leqSW\Sigma\Pi f\iff \Pi f\equivSW\Sigma\Pi f$,
\item $\Sigma\Pi f\leqSW f\iff \Sigma f\equivSW\Sigma\Pi f$,
\item $f\leqSW\Pi\Sigma f\iff \Pi f\equivSW\Pi\Sigma f$,
\item $\Pi\Sigma f\leqSW f\iff \Sigma f\equivSW\Pi\Sigma f$.
\end{enumerate}
Analogous statements hold for the reductions $\leqW,\leq_\mathrm{sW}^*,\leq_\mathrm{W}^*$.
\end{cor}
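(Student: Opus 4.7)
My plan is to derive all four equivalences as purely formal consequences of Proposition~\ref{prop:monoid} together with the abstract closure properties of $\Pi$ (extensive, monotone, idempotent in the upward sense) from Fact~\ref{fact:parallelization} and the abstract interior properties of $\Sigma$ (contractive, monotone, idempotent in the downward sense) from Proposition~\ref{prop:stashing}. The implications from right to left are immediate in every case: in (1) and (3) the hypothesis identifies the relevant degree with $\Pi f$, which lies above $f$ by extensivity; in (2) and (4) it identifies it with $\Sigma f$, which lies below $f$ by contractivity. So all the real work is in the forward directions, and the structure of the four arguments is symmetric in pairs, with (1) and (2) pivoting on clause (2) of Proposition~\ref{prop:monoid} and (3) and (4) pivoting on clause (3).

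For (1) I start from $\Sigma\Pi f\leqSW\Pi f$, which is contractivity of $\Sigma$ applied to $\Pi f$. Assuming $f\leqSW\Sigma\Pi f$, monotonicity of $\Pi$ gives $\Pi f\leqSW\Pi\Sigma\Pi f$, and Proposition~\ref{prop:monoid}(2) collapses the right-hand side back to $\Sigma\Pi f$, yielding $\Pi f\equivSW\Sigma\Pi f$. For (2) I start from $\Sigma f\leqSW\Sigma\Pi f$, which is monotonicity of $\Sigma$ applied to $f\leqSW\Pi f$. Assuming $\Sigma\Pi f\leqSW f$, monotonicity of $\Sigma$ yields $\Sigma\Sigma\Pi f\leqSW\Sigma f$, and $\Sigma\Pi f\equivSW\Sigma\Sigma\Pi f$ (the idempotence of $\Sigma$) converts this into $\Sigma\Pi f\leqSW\Sigma f$, closing the equivalence.

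For (3) and (4) the arguments are dual. In (3), $\Pi\Sigma f\leqSW\Pi f$ comes from monotonicity of $\Pi$ applied to $\Sigma f\leqSW f$. Assuming $f\leqSW\Pi\Sigma f$, monotonicity of $\Pi$ and idempotence of $\Pi$ give $\Pi f\leqSW\Pi\Pi\Sigma f\equivSW\Pi\Sigma f$. In (4), $\Sigma f\leqSW\Pi\Sigma f$ is extensivity of $\Pi$. Assuming $\Pi\Sigma f\leqSW f$, monotonicity of $\Sigma$ gives $\Sigma\Pi\Sigma f\leqSW\Sigma f$, and Proposition~\ref{prop:monoid}(3) rewrites the left-hand side as $\Pi\Sigma f$, completing the chain.

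I do not expect any genuine obstacle here; the corollary is essentially a bookkeeping exercise on top of Proposition~\ref{prop:monoid}. The only thing to watch is keeping directions straight, since extensivity of $\Pi$ and contractivity of $\Sigma$ point opposite ways, so misapplying one for the other would invert an inequality. Because each ingredient used, namely Fact~\ref{fact:parallelization}, Proposition~\ref{prop:stashing}, and Proposition~\ref{prop:monoid}, is proved uniformly for $\leqSW$, $\leqW$, $\leq_\mathrm{sW}^*$ and $\leq_\mathrm{W}^*$, the verbatim argument transfers to establish the analogous equivalences for the other three reducibilities, as required by the final sentence of the statement.
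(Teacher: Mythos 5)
Your proof is correct and is exactly the argument the paper leaves implicit; the corollary is stated without a detailed proof, attributed to Proposition~\ref{prop:monoid} together with the closure/interior operator properties of $\Pi$ and $\Sigma$, and your bookkeeping fills that in faithfully, with all four items and the transfer to the other three reducibilities handled as intended.
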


\begin{figure}
\begin{tikzpicture}[scale=2]
\node[style={fill=\colorf},left] at (-1.03,0) {$f$};
\node[fill,circle,scale=0.3] (f) at (-1,0) {};
\node[style={fill=\colorsf},below] at (-0.3,-0.981) {$\Sigma f$};
\node[fill,circle,scale=0.3] (Sf) at (-0.3,-0.951) {};
\node[style={fill=\colorpf},above] at (-0.3,0.981) {$\Pi f$};
\node[fill,circle,scale=0.3]  (Pf) at (-0.3,0.951) {};
\node[style={fill=\colorspf},right] at  (0.83,0.588)  {$\Sigma\Pi f$};
\node[fill,circle,scale=0.3] (SPf) at (0.8,0.588) {};
\node[style={fill=\colorpsf},right] at (0.83,-0.588) {$\Pi\Sigma f$};
\node[fill,circle,scale=0.3]  (PSf) at (0.8,-0.588) {};
\draw[thick] (Pf) -- (f);
\draw[thick] (Pf) -- (SPf);
\draw[thick] (f) -- (Sf);
\draw[thick] (SPf) -- (PSf);
\draw[thick] (PSf) -- (Sf);
\node at (-0.75,0.585) {$\Pi$};
\node at (0.38,0.88) {$\Sigma$};
\node at (-0.75,-0.585) {$\Sigma$};
\node at (0.38,-0.88) {$\Pi$};
\end{tikzpicture}
\caption{Parallelization-stashing pentagon in the Weihrauch lattice.} 
\label{fig:Pi-Sigma-pentagon}
\end{figure}
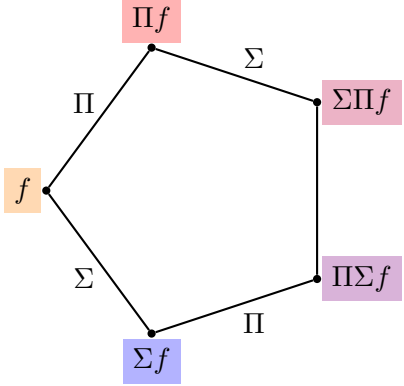

There are many interesting questions regarding the interaction of parallelization and stashing.
For instance, we will see later in Corollary~\ref{cor:injectivity} that the map $f\mapsto(\Sigma f,\Pi f)$
is not injective on Weihrauch degrees, i.e., problems are not characterized by their respective pentagons.
However, these pentagons still reveal some interesting information in many cases, as we will see.

\section{The Upper Turing Cone Operator}
\label{sec:Turing-cone}

The main purpose of this section is to prove that the upper Turing cone operator $f\mapsto f^\DD$ is an
interior operator on the (strong) Weihrauch lattice that coincides with the interior operator $f\mapsto\Sigma f$,
restricted to (strongly) parallelizable problems.

In the following it is useful to have a simplified version of $\Sigma\Pi f$, where we replace double sequences on the input side by ordinary sequences. For this purpose
we consider the injection $I_X:X\into X^\IN,x\mapsto(x,x,x,...)$ for every represented space $X$.

\begin{lem}
\label{lem:Sigma-Pi}
$\Sigma\Pi f\equivSW(\Sigma\Pi f)\circ I_{X^\IN}$ for every problem $f:\In X\mto Y$.
\end{lem}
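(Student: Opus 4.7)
The plan is to prove the two directions separately. The forward direction $(\Sigma\Pi f)\circ I_{X^\IN}\leqSW\Sigma\Pi f$ is immediate: since $I_{X^\IN}\colon X^\IN\to(X^\IN)^\IN$ is computable, we take $K:=I_{X^\IN}$ on the input and $H:=\id$ on the output, and the defining reduction $H\circ (\Sigma\Pi f)\circ K=(\Sigma\Pi f)\circ I_{X^\IN}$ follows by construction. All the substantive content lies in the converse direction.

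For $\Sigma\Pi f\leqSW(\Sigma\Pi f)\circ I_{X^\IN}$, my proposed strategy is flattening followed by a diagonal selection. Fix a computable pairing $\langle\cdot,\cdot\rangle\colon\IN\times\IN\to\IN$. Given a double sequence $(x_{n,k})_{n,k}$, the pre-processor $K$ flattens it to a single sequence $(\tilde x_m)_m$ with $\tilde x_{\langle n,k\rangle}:=x_{n,k}$, which is computable. Applying $(\Sigma\Pi f)\circ I_{X^\IN}$ to $(\tilde x_m)$ yields $(w_n)_n\in\overline{Y^\IN}^\IN$ such that some index $n_0$ satisfies $w_{n_0}\in Y^\IN$ and $(w_{n_0})_{\langle n,k\rangle}\in f(x_{n,k})$ for all $n,k$. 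The post-processor $H$ uses the following diagonal observation: for each $n$, extract from $w_n$ the subsequence $((w_n)_{\langle n,k\rangle})_{k\in\IN}$ as the candidate output $z_n$. At the good index $n=n_0$ this gives $z_{n_0}=((w_{n_0})_{\langle n_0,k\rangle})_k$, whose $k$-th coordinate lies in $f(x_{n_0,k})$, so $z_{n_0}$ is a valid $\Pi f$-solution for row $n_0$ of the input; at other indices $z_n$ may be nonsense, which is harmless since the outer $\Sigma$ only requires one valid row.

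The main technical obstacle is that $\overline{Y^\IN}$ and $\overline{Y}^\IN$ are not the same space: the $w_n$ live in the former, subsequence extraction naturally takes values in the latter, and the final output of $\Sigma\Pi f$ must lie in $\overline{Y^\IN}^\IN$. I would bridge this using Lemma~\ref{lem:completion-product}: first apply $\widehat{t}$ componentwise to $(w_n)_n$ to obtain a sequence in $(\overline{Y}^\IN)^\IN$, then perform the subsequence extraction within $\overline{Y}^\IN$ coordinate-wise, and finally apply $\widehat{s}$ componentwise to land in $\overline{Y^\IN}^\IN$. All three steps are computable. At the good index $n_0$ we have $w_{n_0}\in Y^\IN$, so $t(w_{n_0})=w_{n_0}$, the extracted subsequence contains no $\bot$-entry, and $s$ acts as the identity on it, which preserves the validity argument of the preceding paragraph and completes the reduction.
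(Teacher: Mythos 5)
Your proof is correct and follows essentially the same route as the paper's: flatten the double sequence on the input side via the pairing $\langle\cdot,\cdot\rangle$, apply $(\Sigma\Pi f)\circ I_{X^\IN}$, and on the output side extract the diagonal subsequences $((w_n)_{\langle n,k\rangle})_k$, using the maps $\widehat{t}$ and $\widehat{s}$ from Lemma~\ref{lem:completion-product} to bridge between $\overline{Y^\IN}^\IN$ and $(\overline{Y}^\IN)^\IN$. The correctness argument, that validity at some $n_0$ for all $\langle n,k\rangle$ specializes to validity of the diagonal row $n_0$ for all $k$, is precisely the paper's quantifier implication.
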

\begin{proof}
Since $I_{X^\IN}$ is computable, it is clear that $(\Sigma\Pi f)\circ I_{X^\IN}\leqSW\Sigma\Pi f$ holds.
On the other hand, given an instance $((x_{n,k})_k)_n\in X^{\IN^2}$ of $\Sigma\Pi f$, we can compute the 
interleaved sequence $(x_{n,k})_{\langle n,k\rangle}\in X^\IN$ and then apply $I_{X^\IN}$ followed by $\Sigma\Pi f$. 
This yields a sequence $((y_{m,\langle n,k\rangle})_{\langle n,k\rangle})_m\in\overline{Y^\IN}^\IN$
that satisfies the property $(\exists m)(\forall\langle n,k\rangle)\;y_{m,\langle n,k\rangle}\in f(x_{n,k})$.
Using the parallelization of the problem $t:\overline{Y^\IN}\mto\overline{Y}^\IN$ from Lemma~\ref{lem:completion-product} we can convert
this into a sequence in $\overline{Y}^{\IN^2}$ and then extract a diagonal sequence
$((y_{n,\langle n,k\rangle})_{\langle n,k\rangle})_n\in\overline{Y}^{\IN^2}$ from it.
This sequence can be converted back to $\overline{Y^\IN}^\IN$ by the parallelization of the function $s:\overline{Y}^\IN\to\overline{Y^\IN}$ 
from Lemma~\ref{lem:completion-product} and we claim that the result
is a solution of $\Sigma\Pi f(((x_{n,k})_k)_n)$. This is because
\[(\exists m)(\forall\langle n,k\rangle)\;y_{m,\langle n,k\rangle}\in
  f(x_{n,k})\TO(\exists n)(\forall k)\;y_{n,\langle n,k\rangle}\in f(x_{n,k}). \qedhere\]
\end{proof}

The exact relation between the upper Turing cone operator and stashing is captured in the following result,
again with the help of the injection $I_X$.

\begin{prop}[Upper Turing cone operator]
\label{prop:Turing-cone-version}
$f^\DD\equivSW\Sigma f\circ I_X$ holds for every problem $f:\In X\mto Y$. In particular, $f^\DD\leqSW\Sigma f$.
\end{prop}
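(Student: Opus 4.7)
The plan is to prove the equivalence in both directions, using the computability-theoretic interpretation of $y \leqT q$ and the key feature of the completion $\overline{Y}$ that it allows for "undefined" outputs $\bot$ while preserving a total representation.

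For the easier direction $f^\DD \leqSW \Sigma f \circ I_X$, I would use the identity on the input side. Given $x \in \dom(f)$, call $\Sigma f \circ I_X$ and obtain a name $p$ of some sequence $(y_n) \in \overline{Y}^\IN$ with $y_n \in f(x)$ for at least one $n$. Output $\delta_\DD(p) = \deg_\mathrm{T}(p)$. To verify this is a solution of $f^\DD(x)$, I note that for the witnessing index $n$ the component $y_n$ lies in $Y$, and its name $p_n - 1$ in $\delta_Y$ is computable from $p$ (extract the $n$-th component of $p$ and subtract $1$). Hence $y_n \leqT p$ and $\deg_\mathrm{T}(p) \in f^\DD(x)$.

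For the harder direction $\Sigma f \circ I_X \leqSW f^\DD$, I would again use the identity on the input side and enumerate realizers on the output side. Given $x$, call $f^\DD$ to obtain a name $q$ of some Turing degree $\deg_\mathrm{T}(q)$ with some $y \in f(x)$ satisfying $y \leqT q$. Unfolding the definition of $y \leqT q$ for represented spaces, there is a computable $F \colon \In \IN^\IN \to Y$ with $F(q) = y$. I would fix an effective enumeration of all partial computable $F_n \colon \In\IN^\IN\to\IN^\IN$ and produce the sequence $p = \langle p_0, p_1, p_2, \ldots \rangle$ where each $p_n$ is obtained by incrementing every digit of $F_n(q)$ by one (filling with zeros while no new output appears). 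Then $\delta_{\overline{Y}}(p_n) = \delta_Y(F_n(q))$ when $F_n(q)$ is total and $\delta_Y(F_n(q)) \in Y$, and $\delta_{\overline{Y}}(p_n) = \bot$ otherwise. For the index $n$ with $\delta_Y F_n(q) = y$, we obtain $y_n = y \in f(x)$, so $p$ names a sequence in $\Sigma f \circ I_X(x)$.

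The ``in particular'' clause $f^\DD \leqSW \Sigma f$ then follows because $I_X$ is computable, hence $\Sigma f \circ I_X \leqSW \Sigma f$. The main technical point — and the only place where care is needed — is the second direction, where the total representation $\delta_{\overline{Y}}$ and the option of outputting $\bot$ for divergent or invalid computations are essential; this is exactly what makes the completion indispensable in Definition~\ref{def:stashing}. Once the representation gymnastics are in place, both directions are witnessed by computable $H$ and $K$ of the correct strong-Weihrauch form, and the same argument goes through for the continuous versions $\leq_\mathrm{sW}^*$.
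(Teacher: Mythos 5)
Your proof is correct and takes essentially the same approach as the paper's: the forward direction observes that a realizer's output, read as a $\delta_\DD$-name, already computes the witnessing component, and the reverse direction runs all oracle machines on $q$ in parallel, using the completion element $\bot$ to absorb divergent or invalid computations. The only stylistic difference is that the paper invokes precompleteness of $\delta_{\overline{Y}}$ abstractly to obtain a totalizing index transform $r$ and then gives essentially your concrete ``increment-and-pad-with-zeros'' construction as the intuition behind $r$.
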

\begin{proof}
We consider the  represented spaces $(X,\delta_X)$ and $(Y,\delta_Y)$.
We claim that every realizer $F:\In\IN^\IN\to\IN^\IN$ of $\Sigma f\circ I_X$ is also
a realizer of $f^\DD$, which proves $f^\DD\leqSW\Sigma f\circ I_X$.
To this end, let $F$ be such a realizer and $x:=\delta_X(p)\in\dom(f^\DD)=\dom(f)$.
Let $(y_n):=\delta_{\overline{Y}^\IN}F(p)\in\Sigma f(x,x,...)$.
Let $\langle q_0,q_1,...\rangle:=F(p)$ and let $G:\In\IN^\IN\to\IN^\IN$ be a 
computable realizer of the partial inverse $\iota^{-1}:\In \overline{Y}\to Y$ of the embedding $\iota:Y\to\overline{Y}$,
which is computable according to \cite[Corollary~3.10]{BG20}.
Then there is some $n\in\IN$ with
$y_n=\delta_YG(q_n)=\delta_{\overline{Y}}(q_n)\in f\delta_X(p)=f(x)$ and $y_n\leqT G(q_n)\leqT F(p)$.
This proves the claim.\\
For the reverse reduction $\Sigma f\circ I_X\leqSW f^\DD$ we first note that
$\delta_{\overline{Y}}$ is a precomplete representation by \cite[Proposition~3.8]{BG20}. 
Hence there is a total computable $r:\IN\to\IN$ such that for all $q\in\IN^\IN$, $n\in\IN$
the function $\varphi^q_{r(n)}$ is always total and if $\varphi^q_n$ is total and $\varphi^q_n\in\dom(\delta_Y)$, then $\delta_{\overline{Y}}(\varphi^q_{r(n)})=\delta_{Y}(\varphi^q_n)$.
Intuitively, the programme with code $r(n)$ works as the programme $n$,
but it adds $1$ to all output results and fills up the output with dummy symbols $0$ in appropriate positions
as long as no other 
better information becomes available. 
Now we consider the computable function $H:\IN^\IN\to\IN^\IN$
with $H(q):=\langle\varphi^q_{r(0)},\varphi^q_{r(1)},...\rangle$.
We claim that $HF$ is a realizer of $\Sigma f\circ I_X$ for every realizer $F$ of $f^\DD$.
To this end, let $F$ be such a realizer and $x:=\delta_X(p)\in\dom(\Sigma f\circ I_X)=\dom(f)$.
Then there exists some $y\leqT F(p)$ with $y\in f(x)$. Hence, there is some $n\in\IN$
such that $y=\delta_Y(\varphi^{F(p)}_n)=\delta_{\overline{Y}}(\varphi^{F(p)}_{r(n)})$.
This implies that $\delta_{\overline{Y}^\IN}HF(p)\in\Sigma f\circ I_X(x)$, which completes the proof.
\end{proof}

We note that the main idea of the proof, namely to compute on all G\"odel numbers in parallel, 
can only be realized because stashing uses a completion $\overline{Y}$ of the space $Y$ on the output side.

By a combination of Propositions~\ref{prop:Turing-cone-version} and \ref{prop:monoid} with Lemma~\ref{lem:Sigma-Pi}
we obtain the following corollary.

\begin{cor}[Upper Turing cone operator]
\label{cor:Turing-cone-version}
$(\Pi f)^\DD\equivSW\Sigma\Pi f$ and $(\Pi\Sigma f)^\DD\equivSW\Pi\Sigma f$ for every problem $f$.
\end{cor}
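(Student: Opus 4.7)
The plan is to derive both equivalences as short diagram chases that combine Proposition~\ref{prop:Turing-cone-version}, Lemma~\ref{lem:Sigma-Pi}, and part~(3) of Proposition~\ref{prop:monoid}. The general strategy in each case is the same: first use Proposition~\ref{prop:Turing-cone-version} to rewrite $g^\DD$ as $\Sigma g\circ I$ for a suitable auxiliary problem $g$, then absorb the unwanted precomposition with $I$ by invoking Lemma~\ref{lem:Sigma-Pi} on an appropriately chosen underlying problem, and finally simplify the resulting stashing-parallelization expression using the monoid identities.

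For the first equivalence I instantiate Proposition~\ref{prop:Turing-cone-version} at $g:=\Pi f$, which gives $(\Pi f)^\DD\equivSW(\Sigma\Pi f)\circ I_{X^\IN}$. Lemma~\ref{lem:Sigma-Pi} applied directly to $f$ yields $(\Sigma\Pi f)\circ I_{X^\IN}\equivSW\Sigma\Pi f$, and concatenating the two equivalences gives $(\Pi f)^\DD\equivSW\Sigma\Pi f$.

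For the second equivalence I instantiate Proposition~\ref{prop:Turing-cone-version} at $g:=\Pi\Sigma f$, which produces $(\Pi\Sigma f)^\DD\equivSW(\Sigma\Pi\Sigma f)\circ I_{X^{\IN\times\IN}}$. The key move is to apply Lemma~\ref{lem:Sigma-Pi} not to $f$ but to the derived problem $\Sigma f$, whose domain is $X^\IN$; this absorbs the precomposition with $I$ and yields $(\Sigma\Pi\Sigma f)\circ I_{X^{\IN\times\IN}}\equivSW\Sigma\Pi\Sigma f$. A final appeal to Proposition~\ref{prop:monoid}(3) collapses $\Sigma\Pi\Sigma f$ to $\Pi\Sigma f$, completing the chain $(\Pi\Sigma f)^\DD\equivSW\Pi\Sigma f$.

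The only obstacle I anticipate is bookkeeping of types: the injection $I$ produced by Proposition~\ref{prop:Turing-cone-version} lives on very different underlying spaces in the two cases ($X^\IN$ versus $X^{\IN\times\IN}$), and Lemma~\ref{lem:Sigma-Pi} has to be invoked with the underlying problem chosen so that the induced $I$ matches. Once the types are aligned, the proof is purely a matter of chaining the three cited results.
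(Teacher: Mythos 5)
Your proof is correct and is exactly the derivation the paper intends: the corollary is obtained by instantiating Proposition~\ref{prop:Turing-cone-version} at $\Pi f$ and $\Pi\Sigma f$ respectively, absorbing the resulting $I$-precomposition via Lemma~\ref{lem:Sigma-Pi} (applied to $f$ in the first case and to $\Sigma f$ in the second), and then collapsing $\Sigma\Pi\Sigma f$ to $\Pi\Sigma f$ via Proposition~\ref{prop:monoid}(3). Your type-bookkeeping remark is the right thing to watch, and it resolves as you expect since $X^{\IN\times\IN}$ and $(X^\IN)^\IN$ are identified via the standard computable interleaving isomorphism.
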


This means that both problems on the right-hand side of the diagram in Figure~\ref{fig:Pi-Sigma-pentagon}
can be seen as upper Turing cone versions and hence as computability-theoretic problems.
We emphasize that Turing cones appear here out of a purely topological context without any computability theory being involved.
This is because Corollary~\ref{cor:Turing-cone-version}
is also correct when the computability-theoretic Weihrauch reducibility is replaced by its topological counterpart.

We mention in passing that the upper Turing cone operator is an interior operator on the Weihrauch lattice.

\begin{prop}[Upper Turing cone operator as interior operator]
\label{prop:Turing-cone-interior}
The operation $f\mapsto f^\DD$ is an interior operator on the (strong) Weihrauch lattice.
That is, for all problems $f,g$ we have:
\begin{enumerate}
\item $f^\DD\leqSW f$,
\item $f\leqSW g\TO f^\DD\leqSW g^\DD$,
\item $f^\DD\leqSW f^{\DD\DD}$.
\end{enumerate}
Analogous statements hold for $\leqW$, $\leq_{\rm W}^*$ and $\leq_{\rm sW}^*$.
\end{prop}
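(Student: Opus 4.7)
The plan is to verify the three axioms of an interior operator directly, leaning on the identification $f^\DD\equivSW\Sigma f\circ I_X$ from Proposition~\ref{prop:Turing-cone-version} in order to piggyback on the interior operator properties of stashing already established in Proposition~\ref{prop:stashing}. Since that equivalence is witnessed by computable reductions, it also yields the corresponding equivalence under each of the three weaker reducibilities, so all four cases can be treated uniformly.

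For extensiveness, the chain $f^\DD\equivSW\Sigma f\circ I_X\leqSW\Sigma f\leqSW f$ suffices: the middle reduction uses $K=I_X$ and $H=\id$ (so it is witnessed by computable maps), and the last reduction is Proposition~\ref{prop:stashing}(1). For idempotence, I would prove the sharper statement that $f^\DD$ and $f^{\DD\DD}$ agree as multi-valued functions. Unfolding, $\deg_\mathrm{T}(q)\in f^{\DD\DD}(x)$ asserts the existence of a Turing degree $d\leqT\deg_\mathrm{T}(q)$ together with a point $y\leqT d$ belonging to $f(x)$, and transitivity of $\leqT$ collapses this to $\exists y\leqT q$ with $y\in f(x)$, which is $\deg_\mathrm{T}(q)\in f^\DD(x)$; the identity then supplies the reduction in both directions.

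The main work is monotonicity. Given $f\leqSW g$ via computable $H,K$, Proposition~\ref{prop:stashing}(2) yields $\Sigma f\leqSW\Sigma g$ via $\widehat{K}$ and $\widehat{\overline{H}}$. Combining this with a constant-sequence pre-composition, I would verify $\Sigma f\circ I_X\leqSW\Sigma g\circ I_W$ via $K'=K$ and $H'=\widehat{\overline{H}}$: for any realizer $G'$ of $\Sigma g\circ I_W$, $G'K(p)$ names some $(z_n)$ with $z_n\in g\delta_W K(p)$ for at least one $n$, and applying $\overline{H}$ componentwise yields a sequence whose $n$-th entry names a point of $f\delta_X(p)$, because $HGK$ realizes $f$ for any realizer $G$ of $g$, and we may choose $G$ with $GK(p)=z_n$. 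Hence the output lies in $\Sigma f\circ I_X(\delta_X(p))$. Composing with the equivalences of Proposition~\ref{prop:Turing-cone-version} then gives $f^\DD\leqSW g^\DD$.

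The anticipated obstacle is the correct handling of the completion in monotonicity: one must apply $\overline{H}$ componentwise rather than $H$, so that the ``dummy'' components of $(z_n)$ that need not lie in the image of $g\delta_W K(p)$ can still be processed by totality of the completion. This is exactly the subtlety already dealt with in the proof of Proposition~\ref{prop:stashing}(2), and it transfers cleanly: the case $\leqW$ is handled by cylindering $G'$ in the sense of~\cite[Lemma~2.5]{BP18} just as in that proof, and the topological versions $\leq_\mathrm{W}^*$ and $\leq_\mathrm{sW}^*$ go through identically because $\overline{H}$ is continuous whenever $H$ is continuous by~\cite[Proposition~4.9]{BG20}.
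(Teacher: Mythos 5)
Your proof is correct but takes a genuinely different route from the paper's for part (2). For parts (1) and (3) you and the paper essentially agree: extensiveness is the chain $f^\DD\leqSW\Sigma f\leqSW f$ via Propositions~\ref{prop:Turing-cone-version} and \ref{prop:stashing}, and idempotence follows from the set-theoretic identity $f^{\DD\DD}=f^\DD$ by transitivity of $\leqT$. For monotonicity, however, the paper argues directly at the level of Turing degrees and names and avoids stashing entirely: if $f\leqW g$ via $H,K$ and $G$ realizes $g^\DD$, then $GK(p)$ names a degree computing some name $q$ of a point in $g\delta_W K(p)$, so $H\langle p,q\rangle\leqT\langle p,GK(p)\rangle$ already names a point of $f\delta_X(p)$; hence $f^\DD\leqW g^\DD$ holds via $\id,K$, with easy adaptations for $\leqSW$ (take $H'=\id$, $K'=K$) and, in the topological cases, by relativizing $H$ to an oracle $r$ so that $H\langle p,q\rangle\leqT\langle r,p,GK(p)\rangle$. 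Your route instead re-expresses $f^\DD$ as $\Sigma f\circ I_X$ and reproves a bespoke monotonicity statement $\Sigma f\circ I_X\leqSW\Sigma g\circ I_W$, which does go through (the completion $\overline{H}$ applied componentwise handles the dummy entries, and the $n$-th entry with $z_n\in g\delta_W K(p)$ is sent by $\overline{H}|_Z=H$ to a point of $f\delta_X(p)$), but note this is not literally an instance of Proposition~\ref{prop:stashing}(2): the pre-composition with $I_X$ changes the domain from sequences to single points, so you are carrying out a parallel argument rather than a black-box application. The paper's proof is shorter and more transparent — it exposes that $f\mapsto f^\DD$ is an interior operator for purely computability-theoretic reasons, independent of the completion machinery — while your proof buys a uniform treatment of all four reducibilities at the cost of invoking $\overline{H}$, $\widehat{(\cdot)}$ and $I_X$. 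Both are valid; the paper's is the more economical.
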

\begin{proof}
We consider problems $f:\In X\mto Y$ and $g:\In W\mto Z$.\\
(1) This follows from $f^\DD\leqSW\Sigma f\leqSW f$, which holds by
Propositions~\ref{prop:Turing-cone-version} and \ref{prop:stashing}.\\
(2) Let $f\leqW g$ hold via computable $H,K:\In\IN^\IN\to\IN^\IN$, i.e.,
$H\langle\id,GK\rangle$ is a realizer of $f$ whenever $G$ is a realizer of $g$.
We claim that $f^\DD\leqW g^\DD$ holds via $\id,K$.
Let $p$ be a name of some input $x\in\dom(f)$. Then $K(p)$ is a
name of a point $w\in\dom(g)$ and any name $q$ of a point in $g(w)$
yields a name $H\langle p,q\rangle$ of a point in $f(x)$, since there is a realizer
$G$ of $g$ with $GK(p)=q$. Let now $G$ be a realizer of $g^\DD$. Then
there is a name $q$ of a point in $g(w)$ such that $q\leqT GK(p)$.
Hence $H\langle p,q\rangle\leqT\langle p,GK(p)\rangle$.
This shows that $\langle\id,GK\rangle$ is a realizer of $f^\DD$ whenever
$G$ is a realizer of $g^\DD$ and hence $f^\DD\leqW g^\DD$.
The statement for $\leqSW$ can be proved analogously.
In the topological cases we have to work with continuous $H,K$.
Then $H$ is computable relative to some $r\in\IN^\IN$ and we obtain
as above $H\langle p,q\rangle\leqT\langle r,p,GK(p)\rangle$.
Hence $f^\DD\leq_\mathrm{W}^*g^\DD$ holds via continuous $H',K$, where
$H'\langle p,q\rangle:=\langle r,p,q\rangle$.\\
(3) We have even $f^{\DD\DD}=f^\DD$ by transitivity of Turing reducibility.
\end{proof}

By Proposition~\ref{prop:monoid} stashing extends to an interior operator
on parallelizable Weihrauch degrees and by Corollary~\ref{cor:Turing-cone-version} 
the upper Turing cone operator coincides on those degrees with stashing.

\begin{cor}
\label{cor:Turing-cone-version-interior}
$f\mapsto\Sigma f$ and $f\mapsto f^\DD$ are identical interior operators restricted to (strongly) parallelizable (strong) Weihrauch degrees.
\end{cor}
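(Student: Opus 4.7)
The plan is to leverage Corollary~\ref{cor:Turing-cone-version}, which already identifies $(\Pi f)^\DD$ with $\Sigma\Pi f$ for every problem $f$. Since both $f\mapsto\Sigma f$ and $f\mapsto f^\DD$ are interior operators on the full (strong) Weihrauch lattice by Propositions~\ref{prop:stashing} and~\ref{prop:Turing-cone-interior}, they restrict to interior operators on any sub-preorder on which they are defined. So the only real content is to verify that the two restricted operators actually coincide on (strongly) parallelizable degrees.

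To that end, fix a strongly parallelizable problem $f$, so that $f\equivSW\Pi f$. Applying monotonicity of $\Sigma$ from Proposition~\ref{prop:stashing}(2) yields $\Sigma f\equivSW\Sigma\Pi f$, and applying monotonicity of the upper Turing cone operator from Proposition~\ref{prop:Turing-cone-interior}(2) yields $f^\DD\equivSW(\Pi f)^\DD$. Chaining these through Corollary~\ref{cor:Turing-cone-version} gives
\[\Sigma f\equivSW\Sigma\Pi f\equivSW(\Pi f)^\DD\equivSW f^\DD,\]
as required. The case of $\equivW$ on parallelizable (but not necessarily strongly parallelizable) degrees is handled by exactly the same computation, invoking the $\leqW$ versions of monotonicity stated in those propositions.

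There is no genuine obstacle, since the main technical work has been absorbed into Corollary~\ref{cor:Turing-cone-version}; the only observation needed here is that parallelizability lets one insert $\Pi$ after $f$ without moving the degree, so that both interior operators land on the shared value $\Sigma\Pi f\equivSW(\Pi f)^\DD$. One minor point worth stating explicitly in the write-up is that the claim is about the two operators coinciding as \emph{operators on degrees} (their outputs need not themselves be parallelizable), which is why the restriction mentioned in the statement is only on the input side.
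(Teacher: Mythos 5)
Your computation is correct and matches the paper's route exactly: $\Sigma f\equivSW\Sigma\Pi f\equivSW(\Pi f)^\DD\equivSW f^\DD$ via monotonicity and Corollary~\ref{cor:Turing-cone-version}. One caveat: your opening claim that interior operators ``restrict to interior operators on any sub-preorder'' is not generally true unless the sub-preorder is closed under the operator, and your closing remark that outputs ``need not themselves be parallelizable'' sidesteps rather than resolves this. The paper instead cites Proposition~\ref{prop:monoid}(2) ($\Pi\Sigma\Pi f\equivSW\Sigma\Pi f$) precisely to show that $\Sigma$ maps parallelizable degrees to parallelizable degrees, so the restriction really is a well-defined interior operator on that sub-preorder; your proof would be cleaner if you invoked that fact rather than the loose general claim.
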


We note that $f\mapsto\Sigma f$ and $f\mapsto f^\DD$ are not identical on arbitrary Weihrauch degrees.
The problem $f^\DD$ is always computable when $f$ has only computable solutions. For instance, 
$\LPO^\DD$ is computable, while this is not the case for $\Sigma(\LPO)$ (see Proposition~\ref{prop:Sigma-LPO}).

We can also formulate this result such that problems which are simultaneously stashable and parallelizable
are automatically closed under applying the upper Turing cone operator.

\begin{cor}
\label{cor:closure-properties}
For every problem $f$ the following conditions are equivalent to each other:
\begin{enumerate}
\item $\Sigma f\equivSW f$ and $\Pi f\equivSW f$,
\item $f^\DD\equivSW f$ and $\Pi f\equivSW f$.
\end{enumerate}
An analogous property holds with $\equivW$ instead of $\equivSW$.
\end{cor}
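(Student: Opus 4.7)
The plan is to derive this as a direct consequence of Corollary~\ref{cor:Turing-cone-version-interior}, which states that on parallelizable degrees, the operators $f\mapsto\Sigma f$ and $f\mapsto f^\DD$ coincide. Both conditions (1) and (2) include the clause $\Pi f\equivSW f$, so in either case we are working within the realm where Corollary~\ref{cor:Turing-cone-version-interior} applies, and the remaining clauses become interchangeable.

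More concretely, I would first assume (1), so that $f$ is (strongly) parallelizable and $\Sigma f\equivSW f$. Since $f$ is parallelizable, Corollary~\ref{cor:Turing-cone-version-interior} gives $f^\DD\equivSW\Sigma f$, and transitivity with $\Sigma f\equivSW f$ yields $f^\DD\equivSW f$. Together with $\Pi f\equivSW f$ this establishes (2). Conversely, assuming (2), we again have parallelizability of $f$, so Corollary~\ref{cor:Turing-cone-version-interior} gives $\Sigma f\equivSW f^\DD$, and combining with $f^\DD\equivSW f$ yields $\Sigma f\equivSW f$, which together with $\Pi f\equivSW f$ is exactly (1).

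For the analogous $\equivW$ statement, I would appeal to the $\equivW$ version of Corollary~\ref{cor:Turing-cone-version-interior} in the same way. This is justified because Proposition~\ref{prop:Turing-cone-version} and the arguments leading to Corollary~\ref{cor:Turing-cone-version-interior} are formulated uniformly for $\leqSW$, $\leqW$, $\leq_\mathrm{sW}^*$, and $\leq_\mathrm{W}^*$ (indeed, $\leqSW$-reductions imply $\leqW$-reductions, so the $\equivSW$ version of the interior-operator identification restricts to an $\equivW$ version on Weihrauch degrees).

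There is no real obstacle here: the entire content has been packed into Corollary~\ref{cor:Turing-cone-version-interior} and the characterization of parallelizable degrees. The only mild subtlety is to make sure the parallelizability hypothesis is actually needed in \emph{both} directions (it is, because Corollary~\ref{cor:Turing-cone-version-interior} only asserts the coincidence of $\Sigma$ and $\cdot^\DD$ on parallelizable degrees, and we have already noted via the $\LPO$ example that they differ in general). Since $\Pi f\equivSW f$ appears explicitly in both (1) and (2), this hypothesis is available whenever we need it, so the argument is a short chase of equivalences.
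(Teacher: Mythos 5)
Your proof is correct and follows the same route the paper implicitly intends: Corollary~\ref{cor:closure-properties} is a direct reformulation of Corollary~\ref{cor:Turing-cone-version-interior}, using the parallelizability hypothesis $\Pi f\equivSW f$ (common to both (1) and (2)) to invoke the coincidence of $\Sigma$ and $\cdot^\DD$ and then chasing transitivity. One small imprecision: your parenthetical ``$\leqSW$-reductions imply $\leqW$-reductions, so the $\equivSW$ version of the interior-operator identification restricts to an $\equivW$ version'' does not quite do the job on its own, since the $\equivW$ version has a \emph{weaker} hypothesis (parallelizable rather than strongly parallelizable) and therefore cannot simply be inherited from the $\equivSW$ version; what actually does the job is your other remark, that Corollary~\ref{cor:Turing-cone-version} holds unconditionally with $\equivSW$ (hence also $\equivW$) for all problems and that $\Sigma$ and $\cdot^\DD$ are both monotone with respect to each reducibility, so applying monotonicity to $\Pi f\equivW f$ yields $f^\DD\equivW\Sigma f$ directly.
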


It follows from Corollary~\ref{cor:Turing-cone-version} that problems $g$ that are simultaneously parallelizable
and stashable can only occur in certain regions of the Weihrauch lattice. For one, every problem with the set
of Turing degrees as target set is {\em densely realized} by \cite[Corollary~4.9]{BHK17a}, which means
that a realizer of such a problem can produce outputs that start with arbitrary prefixes.
This in turn implies by \cite[Proposition~6.3]{BP18} that any problem with discrete output below it has to be computable.
We formulate this as a corollary.

\begin{cor}[Parallelizable and stashable problems]
\label{cor:parallelizable-stashable}
Let $f:\In X\mto\IN$ be a problem and let $g$ be a problem that is parallelizable and stashable.
If $f\leqW g$ holds, then $f$ is computable.
\end{cor}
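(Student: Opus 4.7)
The plan is to chain the two interior operators for parallelizable-and-stashable $g$ to obtain $g\equivW g^\DD$, and then use two external facts about densely realized problems to conclude that any $f$ with discrete output below $g$ must be computable.

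First, I would apply Corollary~\ref{cor:closure-properties} directly: since $g$ is assumed to be both parallelizable ($\Pi g\equivW g$) and stashable ($\Sigma g\equivW g$), the ``$\equivW$'' version of that corollary immediately yields $g^\DD\equivW g$. One can also see this without quoting Corollary~\ref{cor:closure-properties} by reasoning as follows: parallelizability of $g$ together with Corollary~\ref{cor:Turing-cone-version} gives $g^\DD\equivW(\Pi g)^\DD\equivSW\Sigma\Pi g$, and stashability together with parallelizability collapses $\Sigma\Pi g\equivW\Sigma g\equivW g$ via the monotonicity of $\Sigma$ and $\Pi$ proved in Propositions~\ref{prop:stashing} and the analogue for parallelization. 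Either route establishes the key equivalence $g\equivW g^\DD$.

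Next, combining $f\leqW g$ with $g\equivW g^\DD$ gives $f\leqW g^\DD$ by transitivity. The target space of $g^\DD$ is the set $\DD$ of Turing degrees, and by \cite[Corollary~4.9]{BHK17a} every problem whose target is $\DD$ is densely realized. Finally, since $f:\In X\mto\IN$ has discrete output, \cite[Proposition~6.3]{BP18} tells us that any problem with discrete output which is Weihrauch-reducible to a densely realized problem must be computable. Hence $f$ is computable, as claimed.

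The substance here lies entirely in the first step, namely in exploiting the coincidence of the two interior operators $\Sigma$ and ${}^\DD$ on parallelizable degrees. Once that identification is in hand, there is no further computation; the remainder of the argument is a straightforward invocation of dense realizability. The only mild care required is to ensure that the appropriate \emph{non-strong} version of Corollary~\ref{cor:closure-properties} is being used, since the hypothesis on $g$ is stated in terms of $\equivW$ rather than $\equivSW$; but Proposition~\ref{prop:Turing-cone-interior} and Proposition~\ref{prop:monoid} are explicitly formulated for both variants, so this causes no difficulty.
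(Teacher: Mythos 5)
Your proof is correct and follows the paper's own argument essentially verbatim: both establish $g\equivW g^\DD$ (the paper cites Corollary~\ref{cor:Turing-cone-version}, you cite the equivalent Corollary~\ref{cor:closure-properties}), then invoke dense realizability of $\DD$-valued problems from \cite{BHK17a} and the fact from \cite{BP18} that a discrete-output problem below a densely realized one is computable. The two routes you sketch for the first step are both valid and coincide with the paper's reasoning.
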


One of the weakest problems with discrete output that is discontinuous is $\ACC_\IN$,
the all-or-co-unique choice problem on $\IN$. 
This problem was studied in~\cite{BHK17a} and an equivalent problem was investigated earlier
under the name $\LLPO_X$~\cite{Wei92c,HK14} (and under the name $\LLPO_\infty$~\cite[Definition~16]{Myl06} in the case of $\ACC_\IN$).
Intuitively speaking, $\ACC_\IN$ is the problem that given a list of natural numbers which is either empty
or contains exactly one number, one has to produce a number which is not in the list.
For $f=\ACC_\IN$ we can also phrase Corollary~\ref{cor:parallelizable-stashable} as follows.

\begin{cor}[The cone of all-or-co-unique choice]
\label{cor:cone-ACC}
If $\ACC_\IN\leqW g$ holds for some problem $g$, then $g$ cannot be simultaneously parallelizable and stashable.
\end{cor}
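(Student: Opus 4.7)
The plan is to apply Corollary \ref{cor:parallelizable-stashable} directly, with $f := \ACC_\IN$ in the role of the problem with discrete output, and then invoke non-computability of $\ACC_\IN$ to obtain the desired contradiction. By definition $\ACC_\IN : \In \IN^\IN \mto \IN$ has codomain $\IN$, so it fits verbatim into the hypothesis of that corollary. Consequently, if $g$ were a problem that is simultaneously parallelizable and stashable with $\ACC_\IN \leqW g$, then Corollary \ref{cor:parallelizable-stashable} would yield that $\ACC_\IN$ is computable, contradicting its known non-computability from \cite{BHK17a}.

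The only substantive point is therefore to verify that $\ACC_\IN$ is itself not computable, which I would do by a short diagonalization. Suppose for contradiction that $F : \In \IN^\IN \to \IN^\IN$ were a computable realizer of $\ACC_\IN$. Apply $F$ to the input $p := 000\ldots$, which lies in $\dom(\ACC_\IN)$ since $\range(p-1) = \emptyset$. By continuity of $F$, some finite prefix of $p$ already forces $F$ to commit to a name of a specific natural number $n \in \IN$. But we can extend $p$ to an input $p' \in \dom(\ACC_\IN)$ agreeing with $p$ on that prefix and with $\range(p'-1) = \{n\}$; then any realizer of $\ACC_\IN$ on $p'$ must output a value different from $n$, contradicting the commitment of $F$.

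Since both ingredients are immediate, no genuine obstacle is expected. The only subtle point to watch is the direction of the application of Corollary \ref{cor:parallelizable-stashable}: it is $\ACC_\IN$ that plays the role of the $\IN$-valued problem $f$, while the hypothesized $g$ plays the role of the parallelizable and stashable upper bound. With this identification the corollary fires immediately, and combined with the standard non-computability of $\ACC_\IN$ it rules out the existence of such a $g$.
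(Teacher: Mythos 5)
Your proposal is correct and matches the paper's approach exactly: the paper obtains Corollary~\ref{cor:cone-ACC} by instantiating Corollary~\ref{cor:parallelizable-stashable} with $f=\ACC_\IN$, combined with the (known) fact that $\ACC_\IN$ is discontinuous and hence not computable. Your extra diagonalization establishing non-computability of $\ACC_\IN$ is a correct and welcome verification of a step the paper leaves implicit.
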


Hence, in a certain sense, problems that are parallelizable and stashable at the same time are rare, even rarer than this result suggests.
Namely, $\ACC_\IN$ is not the weakest discontinuous problem with discrete output.
Mylatz has proved that there are also discontinuous problems of type $f:\In\IN^\IN\mto\IN$ with $f\lW\ACC_\IN$~\cite[Satz~14]{Myl06}.

\section{The Discontinuity Problem in Pentagons}
\label{sec:discontinuity-problem}

In this section we investigate the discontinuity problem $\DIS$ by studying a number of stashing-parallelization pentagons
in which it appears as the bottom problem. 
Along the line we will formulate some problems that are equivalent to $\DIS$.
In the following we use the notation $\uwidehat{f}=\Sigma f$ for the stashing of specific problems $f$.
We start with defining a number of problems related to $\ACC_\IN$.

\begin{defi}[Problems related to all-or-co-unique choice]
\label{def:Sigma-ACC}
We consider the following problems:
\begin{enumerate}
\item $A:\IN^\IN\mto\IN,\langle \langle i,n\rangle,p\rangle\mapsto\{k\in\IN:\varphi^p_i(n)\not=k\}$,
\item $B:\IN^\IN\mto\overline{\IN}^\IN,\langle i,p\rangle\mapsto\{q\in\overline{\IN}^\IN:(\exists n)\;\varphi_i^p(n)\not=q(n)\in\IN\}$,
\item $C:\IN^\IN\mto\IN^\IN,\langle i,p\rangle\mapsto\{q\in\IN^\IN:(\exists n)\;\varphi_i^p(n)\not=q(n)\}$.
\end{enumerate}
\end{defi}

As a first result we prove that the discontinuity problem is the stashing of $\ACC_\IN$.

\begin{prop}[All-or-co-unique choice]
\label{prop:Sigma-ACC}
We obtain
$\uwidehat{\ACC_\IN}\equivSW\uwidehat{A}\equivSW B\equivSW C\equivSW\DIS$ and $\ACC_\IN\equivSW A$.
\end{prop}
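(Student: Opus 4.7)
The plan is to establish the proposition as a cycle of strong Weihrauch reductions
\[\uwidehat{\ACC_\IN} \leqSW \uwidehat{A} \leqSW B \leqSW C \leqSW \DIS \leqSW \uwidehat{\ACC_\IN},\]
together with the auxiliary equivalence $\ACC_\IN \equivSW A$; all listed equivalences then follow by transitivity.

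The auxiliary $\ACC_\IN \equivSW A$ is a direct application of the smn-theorem: given a valid $\ACC_\IN$-instance $p$, smn produces an index $i$ with $\varphi_i^p(0)$ equal to the unique element of $\range(p-1)$ when it appears (diverging otherwise), so any $k \in A(\langle\langle i,0\rangle,p\rangle)$ lies in $\ACC_\IN(p)$; the reverse direction computably emits an enumeration of $\{\varphi_i^p(n)+1\}$ as an $\ACC_\IN$-instance. Monotonicity of $\Sigma$ from Proposition~\ref{prop:stashing}(2) then gives $\uwidehat{\ACC_\IN} \equivSW \uwidehat{A}$. The equivalence $\uwidehat{A} \equivSW B$ is a packing/splitting argument: via smn a sequence $(\langle\langle i_n,k_n\rangle,p_n\rangle)_n$ becomes a single $\langle i',p'\rangle$ with $\varphi_{i'}^{p'}(n)=\varphi_{i_n}^{p_n}(k_n)$, and because $A$ outputs in $\IN$ the two problems share the output space $\overline{\IN}^\IN$, so $B$'s solution is literally a $\uwidehat{A}$-solution; the reverse splits a single index into per-position instances. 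The link $B \leqSW C$ is the computable embedding $\iota^\IN:\IN^\IN\hookrightarrow\overline{\IN}^\IN$; the $C$-solution is already $\IN$-valued everywhere, so the $B$-condition is satisfied. Finally $C \equivSW \DIS$ is routine: for $p=\langle\langle i,r\rangle,p'\rangle$ one has $\U(p)=\varphi_i^{\langle r,p'\rangle}$, and smn conversely produces, for any $\langle i,p\rangle$, an input $p^*$ with $\U(p^*)=\varphi_i^p$.

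The main obstacle is closing the cycle with $\DIS \leqSW \uwidehat{\ACC_\IN}$. Here I invoke Theorem~\ref{thm:DIS}: by the Wadge game characterization it suffices to exhibit a computable winning strategy for Player I in the Wadge game for $\uwidehat{\ACC_\IN}$. Player I plays her $\ACC_\IN$-instances coordinate by coordinate, keeping each $p_n$ equal to $0^\IN$ until Player II commits a natural number $y_n\in\IN$ at coordinate $n$ (detected by a nonzero symbol in Player II's name of $y_n$), at which point Player I begins emitting the single shifted value $y_n+1$ into $p_n$, so that $y_n\in\range(p_n-1)$ and therefore $y_n\notin\ACC_\IN(p_n)$. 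Each $p_n$ remains a valid $\ACC_\IN$-instance since it lists at most one value. Player II is free to play $\bot$ at any coordinate by virtue of the completion on the output side of $\Sigma$, but then the existential $\uwidehat{\ACC_\IN}$-condition fails at every coordinate, so Player I still wins. The delicate point is upgrading the resulting reduction to $\leqSW$; I would handle this by noting that the post-processing of Player II's output into a $\DIS$-solution depends only on Player II's moves and hence fits the stronger template $HGK$ directly, without any genuine use of the first component in $H\langle \id, GK\rangle$.
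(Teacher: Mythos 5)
Your proposal is correct and follows essentially the same route as the paper: reduce $\ACC_\IN$ to $A$, invoke monotonicity of stashing, pack/split between $\uwidehat{A}$ and $B$ via smn, embed $B\leqSW C$ via $\iota$, translate $C\equivSW\DIS$ by the universality of $\U$, and close the cycle with the Wadge game characterization. The only deviation is that you run the Wadge game against $\uwidehat{\ACC_\IN}$ directly, whereas the paper runs it against $B$ (exhibiting one fixed G\"odel number $i$ that waits for pairs $\langle n,k\rangle+1$ to show up in $p$); your version is slightly more self-contained since it avoids committing to that specific index, but the commitment-countering strategy and the essential use of the completion $\overline{\IN}$ as a ``safe default'' are the same. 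On the one delicate point you flag — obtaining a \emph{strong} reduction from Theorem~\ref{thm:DIS}, which is stated for $\leqW$ — the paper also passes over it silently; the resolution, which your remark gestures at correctly, is that the construction extracted from a computable winning strategy for Player~I builds the $\DIS$-realizer in the form $G\circ K$ (apply the $f$-realizer to the input produced by co-routining the strategy against $\U(p)$), which never reads back the original $\DIS$-instance and thus already is a strong reduction. Spelling out that last point would tighten the write-up, but the proof is sound.
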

\begin{proof}
It is straightforward to see that $\ACC_\IN\equivSW A$, which implies $\uwidehat{\ACC_\IN}\equivSW\uwidehat{A}$ by Proposition~\ref{prop:stashing}.

We prove $\uwidehat{A}\equivSW B$. 
We note that $\uwidehat{A}$ is of type $\uwidehat{A}:(\IN^\IN)^\IN\mto\overline{\IN}^\IN$.
In order to show $\uwidehat{A}\leqSW B$, we consider instances
\[p=(\langle\langle i_0,n_0\rangle,p_0\rangle,\langle\langle i_1,n_1\rangle,p_1\rangle,\langle\langle i_2,n_2\rangle,p_2\rangle,...)\in(\IN^\IN)^\IN\]
of $\uwidehat{A}$.
There is a $j\in\IN$ such that $\varphi_{j}^{\langle p\rangle}(k)=\varphi^{p_k}_{i_k}(n_k)$ for all $k\in\IN$ and all $p$ of the above form.
Hence, a solution to $B\langle j,\langle p\rangle\rangle$ is a solution to $\uwidehat{A}(p)$.
Thus $\uwidehat{A}\leqSW B$. 
For the inverse reduction we consider the computable function
$K:\IN^\IN\to(\IN^\IN)^\IN$ with $K\langle i,p\rangle:=(\langle\langle i,0\rangle,p\rangle,\langle\langle i,1\rangle,p\rangle,\langle\langle i,2\rangle,p\rangle,...)$.
This function reduces $B$ to $\uwidehat{A}$, i.e., we obtain $\uwidehat{A}\equivSW B$.

It is easy to see that $B\leqSW C$ holds, since the parallelization $\widehat{\iota}:\IN^\IN\to\overline{\IN}^\IN$ of the embedding $\iota:\IN\to\overline{\IN}$ is computable, where $\iota$ is computable according to \cite[Corollary~3.10]{BG20}.

We now prove $C\equivSW\DIS$.
The reduction $C\leqSW\DIS$ follows with help of the computable function $K:\IN^\IN\to\IN^\IN$ with $K\langle i,\langle r,p\rangle\rangle:=\langle\langle i,r\rangle,p\rangle$. The computable
inverse of $K$ yields $\DIS\leqSW C$. We note that these reductions are obvious when the corresponding $\varphi_i^{\langle r,p\rangle}$
is total, but otherwise any $q\in\IN^\IN$ is allowed as a solution in both cases.

Finally, we prove $\DIS\leqSW B$. By Theorem~\ref{thm:DIS} it suffices to show that player I has a computable winning strategy
in the Wadge game $B$. Therefore we consider a G\"odel number $i\in\IN$ such that $\varphi_i^p(n)=k$ if and only if $\langle n,k\rangle+1$ is the first number of the form $\langle n,m\rangle+1$ listed in $p$.
If there is no number of this form, then $\varphi_i^p(n)$ is undefined. In other words, the program $i$ upon input $n$ and oracle $p$ searches for the first number of the form $\langle n,k\rangle+1$ listed in $p$ 
and outputs $k$ if such a number is found. Now player I in the Wadge game $B$ starts playing $\langle i,p_0\rangle$ with $p_0=000...$. This corresponds to the nowhere defined function $\varphi_i^{p_0}$
and hence player II must play a name of some $q_0\in\overline{\IN}^\IN$ with some $n\in\IN$ such that $q_0(n)\in\IN$; otherwise player II looses. However, when the first candidate $n_0\in\IN$ with $q_0(n_0)\in\IN$ appears, 
then player I modifies the $p_0$ in its play to a $p_1$ by appending $\langle n_0,q_0(n_0)\rangle$ in the current position to it. 
Hence $\varphi_i^{p_1}(n_0)=q_0(n_0)$.
This forces player II to modify its play to a name of some $q_1$ with another $n_1\in\IN$ with $q_1(n_1)\in\IN$ and $n_0\not=n_1$; otherwise player II looses.
Now player I modifies $p_1$ by appending $\langle n_1,q_1(n_1)\rangle$ to it. 
This strategy can be continued inductively and describes a computable winning strategy for player I. 
\end{proof}

We can conclude from Proposition~\ref{prop:Sigma-ACC} that the discontinuity problem is stashable.

\begin{cor}
\label{cor:summability-DIS}
$\DIS$ is strongly stashable.
\end{cor}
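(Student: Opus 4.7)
The plan is to obtain this as a short consequence of Proposition~\ref{prop:Sigma-ACC} together with the fact that $\Sigma$ is an interior operator (Proposition~\ref{prop:stashing}). Since Proposition~\ref{prop:Sigma-ACC} has just established $\DIS\equivSW\uwidehat{\ACC_\IN}=\Sigma\ACC_\IN$, the task reduces to showing that $\Sigma\ACC_\IN$ is itself strongly stashable.

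First I would invoke the idempotence of $\Sigma$. From Proposition~\ref{prop:stashing}(1) we have $\Sigma f\leqSW f$ for every problem $f$, and applying monotonicity (part~(2)) yields $\Sigma\Sigma f\leqSW\Sigma f$. Combined with the reverse inequality $\Sigma f\leqSW\Sigma\Sigma f$ from part~(3), this gives $\Sigma\Sigma f\equivSW\Sigma f$ for every problem $f$. Specializing to $f=\ACC_\IN$, we obtain $\Sigma\Sigma\ACC_\IN\equivSW\Sigma\ACC_\IN$.

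Second, I would transport this equivalence along $\DIS\equivSW\Sigma\ACC_\IN$ using monotonicity once more. Applying $\Sigma$ to both sides of $\DIS\equivSW\Sigma\ACC_\IN$, we get $\Sigma\DIS\equivSW\Sigma\Sigma\ACC_\IN$, and the previous step chains this to $\Sigma\DIS\equivSW\Sigma\ACC_\IN\equivSW\DIS$. Hence $\uwidehat{\DIS}\equivSW\DIS$, i.e., $\DIS$ is strongly stashable.

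There is no real obstacle here; the statement is essentially a bookkeeping consequence of the interior operator properties of $\Sigma$ applied to the equivalence $\DIS\equivSW\Sigma\ACC_\IN$ that has already been proved. The only subtlety worth mentioning is that the argument uses strong Weihrauch equivalence throughout, which is justified because Proposition~\ref{prop:stashing} is stated for $\leqSW$ and Proposition~\ref{prop:Sigma-ACC} delivers the equivalence $\DIS\equivSW\Sigma\ACC_\IN$ in the strong form.
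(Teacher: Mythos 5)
Your argument is correct and coincides with the paper's implicit reasoning: the paper derives the corollary directly from Proposition~\ref{prop:Sigma-ACC} together with the interior-operator properties of $\Sigma$ (idempotence and monotonicity), exactly as you spell out. The bookkeeping with strong Weihrauch equivalence is also handled correctly, since both cited results hold in the strong form.
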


The next result supports the slogan that ``non-computability is the parallelization of (effective) discontinuity''.

\begin{thm}[Discontinuity and non-computability]
\label{thm:DIS-NON}
$\widehat{\DIS}\equivSW\NON$.
\end{thm}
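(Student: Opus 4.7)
The plan is to prove the two directions of $\NON\equivSW\widehat{\DIS}$ separately, exploiting in each case the fact that $\U(p)$ is $p$-computable whenever it is defined.

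For the direction $\widehat{\DIS}\leqSW\NON$, given an instance $(p_i)_{i\in\IN}$ of $\widehat{\DIS}$, I would form the join $\langle p_i\rangle_i\in\IN^\IN$ and feed it to $\NON$, obtaining some $q\nleqT\langle p_i\rangle_i$. Since each $\U(p_i)$ is computable from $p_i$, hence from $\langle p_i\rangle_i$, we must have $q\neq\U(p_i)$ for every $i$ with $\U(p_i)$ defined, so the constant sequence $(q,q,q,\ldots)$ is a valid solution of $\widehat{\DIS}((p_i)_i)$. This gives a strong Weihrauch reduction via the preprocessing $K:(p_i)\mapsto\langle p_i\rangle_i$ and the postprocessing $H:q\mapsto(q,q,q,\ldots)$.

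For the direction $\NON\leqSW\widehat{\DIS}$, I would use parallelized discontinuity to diagonalize simultaneously against every $p$-computable real. Given $p$, I construct a sequence $(x_j)_{j\in\IN}$ of $\DIS$-instances such that $\U(x_j)$ is the ``$j$-th diagonal column'' $m\mapsto\varphi^p_j(\langle j,m\rangle)$. Using the relativized smn-theorem (and the fact that $\langle 0^\IN,p\rangle\equivT p$), there is a total computable $h$ with $\varphi^{\langle 0^\IN,p\rangle}_{h(j)}(m)=\varphi^p_j(\langle j,m\rangle)$ whenever $\varphi^p_j$ is total; setting $x_j:=\langle\langle h(j),0^\IN\rangle,p\rangle$ we obtain $\U(x_j)$ of the desired form. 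From any solution $(q_j)$ of $\widehat{\DIS}((x_j)_j)$, I extract $r\in\IN^\IN$ by $r(\langle j,m\rangle):=q_j(m)$, and claim $r\nleqT p$: if $r=\varphi^p_{j_0}$ with $\varphi^p_{j_0}$ total, then $q_{j_0}(m)=r(\langle j_0,m\rangle)=\varphi^p_{j_0}(\langle j_0,m\rangle)=\U(x_{j_0})(m)$ for all $m$, contradicting $q_{j_0}\neq\U(x_{j_0})$.

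The main obstacle is getting the universal-function encoding correct for the specific $\U$ used in the paper, namely $\U\langle\langle i,r\rangle,p\rangle=\varphi^{\langle r,p\rangle}_i$: one has to verify that the diagonal-column function $m\mapsto\varphi^p_j(\langle j,m\rangle)$ is uniformly $\langle 0^\IN,p\rangle$-partial computable in $j$, which follows from the smn-theorem applied to the $\langle 0^\IN,p\rangle$-relative G\"odel numbering. Once this indexing is pinned down, both reductions are computable and preserve the strong form of Weihrauch reducibility.
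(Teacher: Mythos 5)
Your proof is correct, and it takes a genuinely different route from the paper's. The paper first replaces $\DIS$ with the equivalent problem $C\colon\langle i,p\rangle\mapsto\{q:(\exists n)\,\varphi_i^p(n)\not=q(n)\}$ (from Proposition~\ref{prop:Sigma-ACC}), reformulates $\NON$ as $p\mapsto\{q:(\forall i)(\exists n)\,\varphi_i^p(n)\not=q(n)\}$, and for $\NON\leqSW\widehat{C}$ feeds $(\langle i,p\rangle)_{i\in\IN}$ to $\widehat{C}$ and argues that the interleaved output is not $p$-computable by invoking the relativized Kleene fixed-point theorem on the total computable function $r$ that would witness $p$-computability. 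Your argument instead works directly with $\DIS$ and $\U$ and replaces the recursion theorem with a precomputed diagonalization: you choose the instances $x_j$ so that $\U(x_j)$, when defined, is exactly the $j$-th diagonal column $m\mapsto\varphi_j^p(\langle j,m\rangle)$, so that the hypothesis $r=\varphi_{j_0}^p$ immediately forces $q_{j_0}=\U(x_{j_0})$ without any appeal to the recursion theorem. This makes the contradiction entirely elementary, at the cost of having to manage the specific index bookkeeping for the universal function $\U\langle\langle i,r\rangle,p\rangle=\varphi_i^{\langle r,p\rangle}$ (which you correctly reduce to the relativized smn-theorem together with $\langle 0^\IN,p\rangle\equivT p$). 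Your first direction $\widehat{\DIS}\leqSW\NON$ is also slightly more direct than the paper's: rather than establishing $C\leqSW\NON$ and citing strong parallelizability of $\NON$, you exploit that a single $q\nleqT\langle p_i\rangle_i$ already beats $\U(p_i)$ (which is $\langle p_i\rangle_i$-computable whenever defined) simultaneously for all $i$, so the constant sequence $(q,q,\dots)$ works. Both directions are strong reductions as required. One minor phrasing correction: the smn-theorem gives $\varphi_{h(j)}^{\langle 0^\IN,p\rangle}(m)\simeq\varphi_j^p(\langle j,m\rangle)$ for \emph{all} $j,m$ (as an equality of partial functions), not merely ``whenever $\varphi_j^p$ is total''; the latter is just the case where $\U(x_j)$ is defined and is the only case the contradiction needs.
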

\begin{proof}
By Proposition~\ref{prop:Sigma-ACC} it suffices to show $\widehat{C}\equivSW\NON$. For this purpose it is helpful to reformulate $\NON$ as follows:
\[\NON:\IN^\IN\mto\IN^\IN,p\mapsto\{q\in\IN^\IN:(\forall i)(\exists n)\;\varphi^p_i(n)\not=q(n)\}.\]
We note that this formulation of $\NON$ is equivalent to the usual one, as for non-total functions $\varphi^p_i$ there exists always an $n\in\IN\setminus\dom(\varphi_i^p)$ that satisfies $\varphi^p_i(n)\not=q(n)$,
since $q$ is total.
With $\NON$ written in this form it is clear that $C\leqSW\NON$.
Moreover, $\NON$ is strongly parallelizable, since given $p:=\langle p_0,p_1,...\rangle$ it is clear that $q\not\leq_{\rm T}p$ implies $q\not\leq_{\rm T}p_i$ for every $i\in\IN$.
Hence, $\widehat{C}\leqSW\NON$. 

For the inverse reduction $\NON\leqSW\widehat{C}$, we assume that we have given some $p\in\IN^\IN$.
Then we can evaluate $\widehat{C}$ on the instance $(p_i)_{i\in\IN}$ with $p_i:=\langle i,p\rangle$ in order to
get some output $(q_i)_{i\in\IN}\in\widehat{C}(p_i)_{i\in\IN}$ with the property that $(\forall i)(\exists n)\;\varphi^p_i(n)\not=q_i(n)$.
We claim that $q:=\langle q_0,q_1,q_2,...\rangle\not\leq_{\rm T}p$. If we assume the contrary, then
there is some total computable $r:\IN\to\IN$ such that $\varphi^p_{r(i)}(n)=q_i(n)$ for all $i,n\in\IN$.
Hence, by the relativized version of Kleene's fixed point theorem~\cite[Theorem~2.2.1]{Soa16} there is some $i\in\IN$ with $\varphi^p_i(n)=\varphi^p_{r(i)}(n)=q_i(n)$ for all $n\in\IN$,
which contradicts the assumption that $q_i\in C\langle i,p\rangle$. Altogether, this proves $\widehat{\DIS}\equivSW\widehat{C}\equivSW\NON$.
\end{proof}

We note that $\DNC_\IN^\DD\lW\DNC_\IN$ follows  since
$\ACC_\IN\leqW\DNC_\IN$, but $\ACC_\IN\nleqW\DNC_\IN^\DD$ by Corollary~\ref{cor:cone-ACC}.
Together with Fact~\ref{fact:parallelization-problem}, we have established the pentagon of $\ACC_\IN$ given in Figure~\ref{fig:ACCN-pentagon}.
Perhaps the pentagon in Figure~\ref{fig:ACCN-pentagon} is the most natural pentagon in which the discontinuity problem $\DIS$ appears,
but it is by far not the only one. 
It was observed by Jockusch~\cite[Theorem~6]{Joc89} and Weihrauch~\cite[Theorem~4.3]{Wei92c} that
the problems $\DNC_n$ and $\ACC_n$, respectively, form strictly decreasing chains, i.e., we have the following fact (see also \cite[Corollary~82]{HK14a}, \cite[Corollary~3.8]{BHK17a}).

\begin{fact}[Jockusch 1989, Weihrauch 1992]
\label{fact:DNC-ACC}
For all $n\geq2$ we have:
\begin{enumerate}
\item $\DNC_\IN\lSW\DNC_{n+1}\lSW\DNC_n$,
\item $\ACC_\IN\lSW\ACC_{n+1}\lSW\ACC_n$.
\end{enumerate}
\end{fact}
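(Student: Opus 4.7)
The plan is to decompose each chain into forward reductions (which are near-trivial) and strict non-reductions (which are the classical content).

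For the forward reductions in (1), both $\DNC_\IN\leqSW\DNC_{n+1}$ and $\DNC_{n+1}\leqSW\DNC_n$ are witnessed by taking the identity as both inner and outer computable map, since any $q\in n^\IN$ satisfying $q(i)\not=\varphi^p_i(i)$ for all $i$ is automatically a valid output for $\DNC_{n+1}$ or $\DNC_\IN$ on the same input $p$ (the output is simply reinterpreted in the larger target space, recalling that $n$ is identified with $\{0,\dots,n-1\}$). For the forward reductions in (2), $\ACC_{n+1}\leqSW\ACC_n$ uses a computable inner map that rewrites each entry of the input equal to $n+1$, which encodes an enumeration of the value $n$, into the neutral symbol $0$. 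The rewritten enumeration then enumerates at most one element of $\{0,\dots,n-1\}$, namely the original forbidden $k<n$ if any, and the $\ACC_n$-solution, lying in $\{0,\dots,n-1\}$, avoids this element and so is a valid $\ACC_{n+1}$-solution. The reduction $\ACC_\IN\leqSW\ACC_{n+1}$ is obtained analogously by suppressing all input entries encoding a value exceeding $n$.

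The substantive content consists of the four strict non-reductions $\DNC_n\nleqSW\DNC_{n+1}$, $\DNC_{n+1}\nleqSW\DNC_\IN$, $\ACC_n\nleqSW\ACC_{n+1}$, and $\ACC_{n+1}\nleqSW\ACC_\IN$. These are the classical hierarchy theorems of Jockusch for $\DNC_n$ and of Weihrauch for $\LLPO_n=\ACC_n$, with explicit Weihrauch-reducibility strengthenings recorded in \cite{HK14a,BHK17a}. The main obstacle in a fully self-contained proof would be to reproduce Jockusch's priority-style diagonalization for the $\DNC$-separations and Weihrauch's combinatorial adversary argument for the $\ACC$-separations. Since the statement is presented here as a cited fact, the plan is simply to invoke those references for the non-reductions and verify only the forward reductions in detail.
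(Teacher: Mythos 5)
Your proposal matches the paper, which states this as a cited fact (invoking Jockusch, Weihrauch, and the Weihrauch-complexity formulations in \cite{HK14a,BHK17a}) without supplying a proof. Your explicit verification of the forward $\leqSW$-reductions is correct (identity maps for the $\DNC$ chain; suppressing out-of-range enumerated values to the dummy symbol for the $\ACC$ chain), and deferring the four strict non-reductions to the cited references is exactly what the paper does.
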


\begin{figure}
\begin{tikzpicture}[scale=2]
\node[style={fill=\colorf},left] at (-1.03,0) {$\ACC_n$};
\node[fill,circle,scale=0.3] (f) at (-1,0) {};
\node[style={fill=\colorsf},below] at (-0.3,-0.981) {$\DIS$};
\node[fill,circle,scale=0.3] (Sf) at (-0.3,-0.951) {};
\node[style={fill=\colorpf},above] at (-0.3,0.981) {$\DNC_n$};
\node[fill,circle,scale=0.3]  (Pf) at (-0.3,0.951) {};
\node[style={fill=\colorspf},right] at  (0.83,0.588)  {$\PA$};
\node[fill,circle,scale=0.3] (SPf) at (0.8,0.588) {};
\node[style={fill=\colorpsf},right] at (0.83,-0.588) {$\NON$};
\node[fill,circle,scale=0.3]  (PSf) at (0.8,-0.588) {};
\draw[thick] (Pf) -- (f);
\draw[thick] (Pf) -- (SPf);
\draw[thick] (f) -- (Sf);
\draw[thick] (SPf) -- (PSf);
\draw[thick] (PSf) -- (Sf);
\node at (-0.75,0.585) {$\Pi$};
\node at (0.38,0.88) {$\Sigma$};
\node at (-0.75,-0.585) {$\Sigma$};
\node at (0.38,-0.88) {$\Pi$};
\end{tikzpicture}
\caption{$\ACC_n$ pentagon in the Weihrauch lattice for $n\geq2$.} 
\label{fig:ACCn-pentagon}
\end{figure}
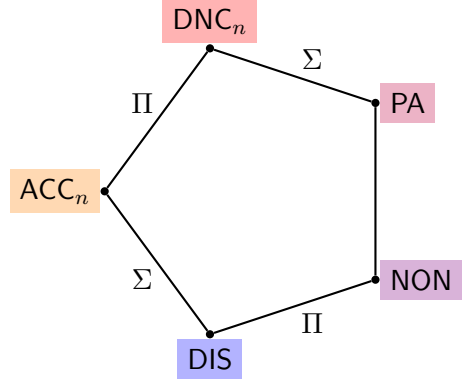

On the other hand, it turns out that the stashing of the problem $\ACC_n$ is strongly equivalent to $\DIS$ for all $n\geq2$.
In order to express this result, it is useful to consider a universal function of type $\U_n:\In\IN^\IN\to\{0,...,n-1\}^\IN$.
Such a function can be defined by truncating $\U$ accordingly:
\[\U_n\langle\langle i,r\rangle,p\rangle:=\min(n-1,\varphi^{\langle r,p\rangle}_i)=\min(n-1,\U\langle\langle i,r\rangle,p\rangle)\]
whenever $\varphi^{\langle r,p\rangle}_i$ is total  (where the minimum is understood pointwise).
Using this definition we can also modify the problem $\DIS$ accordingly and in this way we obtain
\[\DIS_n:\IN^\IN\mto\{0,...,n-1\}^\IN,p\mapsto\{q\in\{0,...,n-1\}^\IN:\U_n(p)\not=q\}\] 
for all $n\geq2$.
In these terms we obtain the following result.

\begin{prop}[All-or-co-unique choice]
\label{prop:Sigma-ACCn}
$\uwidehat{\ACC_n}\equivSW \DIS_n\equivSW\DIS$ for all $n\geq2$.
\end{prop}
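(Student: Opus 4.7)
The plan is to mirror the proof of Proposition~\ref{prop:Sigma-ACC} in the $n$-bounded setting to establish $\uwidehat{\ACC_n}\equivSW\DIS_n$, and then to identify $\DIS_n$ with $\DIS$ via a separate argument in each direction.

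First I would introduce $n$-bounded analogs $A_n,B_n,C_n$ of the problems in Definition~\ref{def:Sigma-ACC}, obtained by replacing each target with its $\{0,\ldots,n-1\}$-restricted version (and the corresponding completion) and by composing $\varphi^p_i$-values with the truncation $\min(n-1,\cdot)$ wherever they feed into the output comparison. The identity $C_n\equivSW\DIS_n$ is then immediate from unfolding the definition of $\U_n$. Next I would transcribe the chain from the proof of Proposition~\ref{prop:Sigma-ACC} to obtain $\uwidehat{\ACC_n}\equivSW\uwidehat{A_n}\equivSW B_n\equivSW C_n\equivSW\DIS_n$. The equivalences $\ACC_n\equivSW A_n$, $\uwidehat{A_n}\equivSW B_n$ and $B_n\equivSW C_n$ go through essentially verbatim, since none of those arguments depend on the size of the output space and the only change is to use the computable embedding $\iota\colon\{0,\ldots,n-1\}\to\overline{\{0,\ldots,n-1\}}$ and its parallelization in place of the $\IN$-valued versions.

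The step $\DIS\leqSW B_n$ (equivalently $\DIS\leqSW\DIS_n$) I would establish by invoking Theorem~\ref{thm:DIS} and running the same computable winning strategy for Player~I as in Proposition~\ref{prop:Sigma-ACC}: Player~I picks a G\"odel number $i$ that searches for the first $\langle k,j\rangle+1$ appearing in its oracle and reports $j$, and then extends the oracle $p_k$ by appending $\langle n_k,q_k(n_k)\rangle$ whenever Player~II plays a finite value $q_k(n_k)$. The key observation is that, because Player~II's admissible play lies in $\{0,\ldots,n-1\}^\IN$, the truncation $\min(n-1,\cdot)$ built into $B_n$ and $\U_n$ is the identity on II's moves, so Player~I's strategy from the unbounded case transfers unchanged and forces $\U_n(p)=q$.

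For the remaining direction $\DIS_n\leqSW\DIS$ I would use an smn-based redundant indicator encoding: construct $K(p)$ such that $\U(K(p))(\langle m,j\rangle)\in\{0,1\}$ records whether $\U_n(p)(m)=j$ for each $j<n$. A $\DIS$-output $q\neq\U(K(p))$ then witnesses a disagreement at some $\langle m^\star,j^\star\rangle$, and a carefully designed computable post-processing $H$ that scans the row $m\mapsto q(\langle m,\cdot\rangle)$ can be used to extract an $\{0,\ldots,n-1\}$-value different from $\U_n(p)(m^\star)$ by a case analysis on whether the flip is from $0$ to a non-zero or the other way around. The main obstacle is precisely this reduction: a naive pointwise truncation $q'(m):=\min(n-1,q(m))$ of a $\DIS$-output breaks down at disagreement positions where $q(m)\geq n$ and $\U(K(p))(m)=n-1$, because the truncated witness then collapses onto $\U_n(p)(m)$. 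Distributing the information about each $\U_n(p)(m)$ across the $n$ positions $\langle m,0\rangle,\ldots,\langle m,n-1\rangle$ via indicator bits is what sidesteps this collapse, and the case analysis needed to verify the post-processing is the delicate part of the argument.
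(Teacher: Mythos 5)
Your chain $\uwidehat{\ACC_n}\equivSW B_n\equivSW C_n\equivSW\DIS_n$ (via $n$-bounded analogues of $A,B,C$) and the Wadge-game argument for $\DIS\leqSW B_n$ are fine and mirror what the paper does; the paper itself just says the $n=2$ version ``can be proved almost literally following the lines of the proof of Proposition~\ref{prop:Sigma-ACC} with some obvious modifications'' and handles $n>2$ by a sandwich argument via Fact~\ref{fact:DNC-ACC}. However, there is a genuine gap in your step $\DIS_n\leqSW\DIS$, and that step is load-bearing: it is exactly what you need to close the loop $C_n\leqSW\DIS_n\leqSW\DIS\leqSW B_n$.

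The indicator encoding cannot work. Your post-processing $H$ is forced to be many-to-one, squashing each row $(q(\langle m,0\rangle),\ldots,q(\langle m,n-1\rangle))\in\IN^n$ down to a single value in $\{0,\ldots,n-1\}$, and the mere fact that $q$ disagrees with $\U(K(p))$ somewhere does not survive that collapse. Concretely, let $q$ be the constant sequence with value $2$. This $q$ disagrees with the $\{0,1\}$-valued $\U(K(p))$ at every coordinate, so there is a realizer of $\DIS$ that answers $q$ on every instance in the range of $K$; but then $c:=H(q)\in\{0,\ldots,n-1\}^\IN$ is a single fixed computable sequence, independent of $p$. Picking $p$ with $\U_n(p)=c$ (possible by the smn-theorem) makes $HGK(p)=c=\U_n(p)$, so the reduction fails. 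The case analysis on ``the direction of the flip'' is not available to $H$, because $H$ sees only $q$ (and in the non-strong case also $p$), not $\U(K(p))$, and therefore cannot tell which direction the flip went; whatever convention $H$ adopts breaks on this $q$.

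What makes the paper's $\DIS_2\leqSW\DIS$ argument work is that the output translation is \emph{injective}: $\iota:\IN^\IN\to2^\IN$, $p\mapsto 0^{p(0)}10^{p(1)}1\cdots$, and $K$ is chosen (via a total computable $s$ with $\iota(\varphi^t_{s(i)})=\min(1,\varphi^t_i)$ on a suitable domain) so that $\iota(\U(K(p)))=\U_2(p)$. Injectivity pushes $q\ne\U(K(p))$ forward to $\iota(q)\ne\U_2(p)$; the degenerate cases (non-total $\varphi^{\langle r,p\rangle}_i$, or only finitely many ones in the truncation) are covered because $\iota(q)$ always has infinitely many ones. If you want to prove $\DIS_n\leqSW\DIS$ directly for each $n$, you should replace the indicator encoding with an injective one, e.g.\ $p\mapsto 0^{p(0)}(n-1)0^{p(1)}(n-1)\cdots$; alternatively, use the paper's sandwich $\DIS\leqSW\DIS_n\equivSW\uwidehat{\ACC_n}\leqSW\uwidehat{\ACC_2}\equivSW\DIS_2\leqSW\DIS$, which only needs the $n=2$ case explicitly.
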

\begin{proof}
By Fact~\ref{fact:DNC-ACC} it suffices to consider the case $n=2$. The remaining cases follow
by Proposition~\ref{prop:Sigma-ACC} since stashing is an interior operator by Proposition~\ref{prop:stashing}.
The reduction $\DIS\equivSW\uwidehat{\ACC_\IN}\leqSW\uwidehat{\ACC_2}$ also follows.
The reduction $\uwidehat{\ACC_2}\leqSW\DIS_2$ can be proved almost literally following 
the lines of the proof of Proposition~\ref{prop:Sigma-ACC} with some obvious modifications.
For instance, one needs to replace
all terms $\varphi^p_i(n)$ in the definitions of $A,B$ and $C$ by $\min(1,\varphi^p_i(n))$;
one has to replace the output types of $A,B$ and $C$ by  $\{0,1\}$, $\overline{\{0,1\}}^\IN$ and $\{0,1\}^\IN$, respectively; 
and one has to work with the embedding $\iota:\{0,1\}\to\overline{\{0,1\}}$.
It remains to prove the reduction $\DIS_2\leqSW\DIS$. For this direction we use the computable embedding
$\iota:\IN^\IN\to2^\IN,p\mapsto0^{p(0)}10^{p(1)}10^{p(2)}1...$.
There is a total computable $s:\IN\to\IN$ such that $\iota(\varphi^t_{s(i)})=\min(1,\varphi^t_{i})$ for all $i\in\IN$ and $t\in\IN^\IN$
such that $\varphi^t_i$ is total and $\min(1,\varphi^t_{i})$ contains infinitely many ones.
Now, given an instance $\langle\langle i,r\rangle,p\rangle$ of $\DIS_2$ we compute the instance $\langle\langle s(i),r\rangle, p\rangle$ of $\DIS$.
If $q\in\IN^\IN$ satisfies $q\not=\U\langle\langle s(i),r\rangle, p\rangle$, then $\iota(q)\not=\U_2\langle\langle i,r\rangle, p\rangle$ follows.
This is clear if $\varphi^{\langle r,p\rangle}_i$ is total and $\min(1,\varphi^{\langle r,p\rangle}_{i})$ contains infinitely many ones. But otherwise every $q\in\IN^\IN$
satisfies the conclusion. Altogether, this completes the proof.
\end{proof}

The upper Turing cone version of $\DNC_n$ for $n\geq 2$ is just the problem $\PA$ of finding a PA degree relative to the input.
By a result of Jockusch and Friedberg~\cite[Theorem~5]{Joc89} the Turing degrees of $q\gg p$ are exactly the degrees that compute
a diagonally non-computable function $f:\IN\to\{0,...,n-1\}$ relative to $p$ for every $n\geq 2$ (see also \cite[Proposition~6.1]{BHK17a}).
Hence we obtain \cite[Corollary~6.4]{BHK17a} the following fact.

\begin{fact}
\label{fact:PA}
$\PA\equivSW\DNC_n^\DD$ for every $n\geq2$.
\end{fact}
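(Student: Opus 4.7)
The plan is to derive the equivalence directly from the Jockusch--Friedberg characterization of $\PA$ degrees together with the definition of the upper Turing cone operator. The essential content lies in the number-theoretic theorem cited just before the statement; its translation into the Weihrauch framework is then largely definitional.

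First I would recall the Jockusch--Friedberg theorem in a clean form: for each $n\geq 2$ and all $p,q\in\IN^\IN$, we have $p\ll q$ if and only if there exists a function $f\colon\IN\to\{0,\ldots,n-1\}$ with $f\leqT q$ and $f(i)\neq\varphi^p_i(i)$ for all $i\in\IN$. Then I would unpack the definitions relevant to both sides. A valid output of $\PA$ on input $p$ is any $q$ with $p\ll q$. On the other side, by the definition of $f\mapsto f^\DD$ together with the representation $\delta_\DD(q)=\deg_\mathrm{T}(q)$, a $\delta_\DD$-name for an element of $\DNC_n^\DD(p)$ is precisely any $q\in\IN^\IN$ that computes some $f\in\DNC_n(p)$. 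By Jockusch--Friedberg these two descriptions of valid $q$ coincide.

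Given this, both strong Weihrauch reductions can be witnessed by $K:=\id$ and $H:=\id$. For $\PA\leqSW\DNC_n^\DD$: any realizer $G$ of $\DNC_n^\DD$ maps $p$ to a $\delta_\DD$-name $G(p)$ of some $\deg_\mathrm{T}(q)\in\DNC_n^\DD(p)$, so $q\equivT G(p)$ computes a function in $\DNC_n(p)$, and Jockusch--Friedberg then yields $p\ll G(p)$, whence $G(p)\in\PA(p)$. Symmetrically, for $\DNC_n^\DD\leqSW\PA$: a realizer $G$ of $\PA$ produces $G(p)\in\PA(p)$, so $p\ll G(p)$, and Jockusch--Friedberg supplies an $f\leqT G(p)$ with $f\in\DNC_n(p)$, showing $\deg_\mathrm{T}(G(p))\in\DNC_n^\DD(p)$ and that $G(p)$ is a valid $\delta_\DD$-name.

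The main (and essentially only) substantive obstacle is invoking Jockusch--Friedberg correctly, and in particular noting that it holds uniformly for every fixed $n\geq 2$, which simultaneously yields $\DNC_m^\DD\equivSW\DNC_n^\DD$ for all $m,n\geq 2$. Once that classical theorem is in hand, the strong Weihrauch equivalence reduces to the observation that an $\leqSW$-reduction via identity pre- and post-processors amounts to equality of the solution sets up to the chosen output representation, which here is immediate.
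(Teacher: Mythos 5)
Your proof is correct and takes the same route the paper intends: the paper cites the result (to \cite[Corollary~6.4]{BHK17a}) and justifies it in the preceding sentence exactly by invoking the Jockusch--Friedberg characterization of $\gg$, after which the strong equivalence is, as you observe, definitional via identity reductions since both $\PA(p)$ and the $\delta_\DD$-names of $\DNC_n^\DD(p)$ are precisely $\{q : p\ll q\}$.
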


We note that $\PA\lW\DNC_n$ follows  since
$\ACC_n\leqW\DNC_n$, but $\ACC_n\nleqW\PA$ by Corollary~\ref{cor:parallelizable-stashable}.
Altogether, we have thus established the pentagon of $\ACC_n$ for $n\geq2$ given in Figure~\ref{fig:ACCn-pentagon}.
An important special case of this diagram is the case for $n=2$. 
Since $\LLPO\equivSW\ACC_2$, Fact~\ref{fact:parallelization-problem} yields the stashing-parallelization pentagon
of $\LLPO$ given in Figure~\ref{fig:LLPO-pentagon}.

\begin{figure}
\begin{tikzpicture}[scale=2]
\node[style={fill=\colorf},left] at (-1.03,0) {$\LLPO$};
\node[fill,circle,scale=0.3] (f) at (-1,0) {};
\node[style={fill=\colorsf},below] at (-0.3,-0.981) {$\DIS$};
\node[fill,circle,scale=0.3] (Sf) at (-0.3,-0.951) {};
\node[style={fill=\colorpf},above] at (-0.3,0.981) {$\WKL$};
\node[fill,circle,scale=0.3]  (Pf) at (-0.3,0.951) {};
\node[style={fill=\colorspf},right] at  (0.83,0.588)  {$\PA$};
\node[fill,circle,scale=0.3] (SPf) at (0.8,0.588) {};
\node[style={fill=\colorpsf},right] at (0.83,-0.588) {$\NON$};
\node[fill,circle,scale=0.3]  (PSf) at (0.8,-0.588) {};
\draw[thick] (Pf) -- (f);
\draw[thick] (Pf) -- (SPf);
\draw[thick] (f) -- (Sf);
\draw[thick] (SPf) -- (PSf);
\draw[thick] (PSf) -- (Sf);
\node at (-0.75,0.585) {$\Pi$};
\node at (0.38,0.88) {$\Sigma$};
\node at (-0.75,-0.585) {$\Sigma$};
\node at (0.38,-0.88) {$\Pi$};
\end{tikzpicture}
\caption{$\LLPO$ pentagon in the Weihrauch lattice.} 
\label{fig:LLPO-pentagon}
\end{figure}

We have a number of basic discrete problems ordered in the following way~\cite[Fact~3.4]{BHK17a}, \cite[Theorem~3.10]{BG11a}, \cite[Section~13]{BGM12}.

\begin{fact}
\label{fact:ACC-LLPO-LPO-CN}
$\ACC_\IN\leqW\LLPO\leqW\LPO\leqW\lim_2\leqW\C_\IN$.
\end{fact}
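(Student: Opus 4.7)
The plan is to establish the chain by proving each of the four reductions separately, moving left to right. The first reduction, $\ACC_\IN\leqW\LLPO$, is not really new work at all: by Fact~\ref{fact:DNC-ACC} we already have $\ACC_\IN\leqSW\ACC_{n+1}\leqSW\ACC_n$ for every $n\geq2$, so in particular $\ACC_\IN\leqSW\ACC_2$, and $\ACC_2=\LLPO$ was recorded right after Definition~\ref{def:problems}. So this link is just invoking a previously cited fact.

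For $\LLPO\leqW\LPO$, I would give a one-shot strong reduction. Given an instance $p\in\IN^\IN$ of $\LLPO=\C_2$, i.e.\ with $|\{0,1\}\setminus\range(p-1)|\geq 1$, I would compute $q\in\IN^\IN$ with $q(n):=0$ if $p(n)\neq 1$ and $q(n):=1$ otherwise, so that $q=000\ldots$ exactly when $0\notin\range(p-1)$. Then I query $\LPO(q)$: if it returns $1$, output $0$; if it returns $0$, the promise forces $1\notin\range(p-1)$, so output $1$. For $\LPO\leqW\lim_2$, given $p\in\IN^\IN$ I would feed $\lim_2$ the monotone bit sequence $r$ with $r(n)=0$ while $p{\upharpoonright}(n{+}1)$ is all zero and $r(n)=1$ from the first witness onward; the sequence converges to $0$ exactly when $p=000\ldots$, so the limit is $1-\LPO(p)$, from which $\LPO(p)$ is read off.

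The most delicate of the four links is $\lim_2\leqW\C_\IN$, because here we have to convert analytic information (stabilization) into the negative-enumeration format required by $\C_\IN$. Given a convergent $(x_n)\in 2^\IN$, I would produce an enumeration $q\in\IN^\IN$ whose range $\range(q-1)$ collects every index $n$ for which we ever observe some later $m\geq n$ with $x_m\neq x_n$. This is effective: at stage $s$, enumerate any $n<s$ that has already been refuted by some $m\leq s$. Because $(x_n)$ is eventually constant, only finitely many indices are enumerated, so the complement $A$ is cofinite, in particular nonempty, so the $\C_\IN$-promise is met. Applying $\C_\IN$ yields some $k\in A$; by definition of $A$ we have $x_m=x_k$ for all $m\geq k$, hence $\lim_n x_n=x_k$, which is the output.

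The main obstacle I anticipate is not any single reduction but the bookkeeping at the boundary: in both $\LLPO\leqW\LPO$ and $\lim_2\leqW\C_\IN$ I must check that the auxiliary sequence I feed to the oracle genuinely satisfies the promise of the target problem (the at-most-one-element condition for $\C_2$, and the nonempty-complement condition for $\C_\IN$), and this uses the respective input promises of $\LLPO$ and of $\lim_2$ in an essential way. Once that is verified, composing the four reductions by transitivity of $\leqW$ gives the full chain.
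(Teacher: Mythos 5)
The paper itself gives no proof here: Fact~\ref{fact:ACC-LLPO-LPO-CN} is recorded as a known result with citations to \cite{BHK17a}, \cite{BG11a} and \cite{BGM12}, so it is purely a pointer to the literature. Your proposal instead supplies a self-contained argument, and all four reductions are correct. The first link correctly delegates to Fact~\ref{fact:DNC-ACC} (which appears earlier in the paper, so there is no circularity) together with $\ACC_2=\LLPO$. The $\LLPO\leqW\LPO$ step is the standard coding: $q=000\ldots$ iff $0\notin\range(p-1)$, and the $\C_2$ promise makes the two answers of $\LPO$ determine a valid output, even as a strong reduction. The $\LPO\leqW\lim_2$ step via the monotone bit sequence is the classical one. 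The $\lim_2\leqW\C_\IN$ step is also the standard argument: enumerating the indices that are refuted by a later change produces a co-finite (hence nonempty) complement because the sequence stabilizes, and any $k$ in that complement satisfies $x_m=x_k$ for all $m\geq k$, so $x_k$ is the limit; you correctly note that this last link needs access to the original input to read off $x_k$, so it is an ordinary rather than strong reduction, which suffices. In short, your proof is a valid and reasonably tight replacement for the citation; what the paper buys by citing is brevity and attribution, while your version buys self-containment at essentially no cost since the reductions are short.
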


Now the question appears how far up in this chain of discrete problems
we can go such that we still obtain the discontinuity problem $\DIS$ as stashing
of the corresponding discrete problem?
We will see in Proposition~\ref{prop:NLIM-WKL} that a phase transition in this respect
happens between $\LPO$ and $\lim_2$.

We now study the pentagon of $\LPO$.
We use the notation $\W^p_i:=\dom(\varphi^p_i)$ and by $\chi_A:\IN\to\{0,1\}$
we denote the {\em characteristic function} of $A\In\IN$ with $A=\chi_A^{-1}\{1\}$.
We first define some problems related to $\LPO$.

\begin{defi}[Problems related to $\LPO$]
\label{def:Sigma-LPO}
We consider:
\begin{enumerate}
\item $L:\IN^\IN\to\{0,1\},\langle \langle i,n\rangle,p\rangle\mapsto1-\chi_{\W^p_i}(n)=\left\{\begin{array}{cl}0 & \mbox{if $n\in\dom(\varphi^p_i)$}\\ 1 & \mbox{otherwise}\end{array}\right.$,
\item $D:\IN^\IN\mto\overline{\{0,1\}}^\IN,\langle i,p\rangle\mapsto\{q\in\overline{\{0,1\}}^\IN:(\exists n)\;\chi_{\W^p_i}(n)\not=q(n)\in\{0,1\}\}$,
\item $E:\IN^\IN\mto\{0,1\}^\IN,\langle i,p\rangle\mapsto\{q\in\{0,1\}^\IN:(\exists n)\;\chi_{\W^p_i}(n)\not=q(n)\}$.
\end{enumerate}
\end{defi}

Now we can prove the following result.

\begin{prop}[Stashing of $\LPO$]
\label{prop:Sigma-LPO}
$\LPO\equivSW L$ and $\uwidehat{\LPO}\equivSW\uwidehat{L}\equivSW D\equivSW E\equivSW\DIS$.
\end{prop}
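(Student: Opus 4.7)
My plan is to mirror the proof of Proposition~\ref{prop:Sigma-ACC} as closely as possible, with the problems $L$, $D$, $E$ playing the roles of $A$, $B$, $C$ respectively, and with the characteristic function $\chi_{\W^p_i}$ replacing the partial function $\varphi^p_i$ throughout.

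First I would establish $\LPO\equivSW L$ directly. For $L\leqSW\LPO$, given $\langle\langle i,n\rangle,p\rangle$ I dovetail $\varphi^p_i(n)$ and produce a sequence $r\in\IN^\IN$ that equals $0^\IN$ precisely when $\varphi^p_i(n)$ diverges, so that $\LPO(r)=L\langle\langle i,n\rangle,p\rangle$; for $\LPO\leqSW L$, given $p$, I use a program $i$ with oracle $p$ that halts on input $0$ iff it finds a non-zero entry in $p$, so that $L\langle\langle i,0\rangle,p\rangle=\LPO(p)$. Then $\uwidehat{\LPO}\equivSW\uwidehat{L}$ is immediate from Proposition~\ref{prop:stashing}. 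Next, $\uwidehat{L}\equivSW D$ is the direct analogue of $\uwidehat{A}\equivSW B$: use the smn-theorem to collapse a sequence $(\langle\langle i_k,n_k\rangle,r_k\rangle)_k$ of $L$-instances into a single $D$-instance with a combined index $j$ satisfying $\chi_{\W^{\langle r_k\rangle}_j}(k)=\chi_{\W^{r_k}_{i_k}}(n_k)$, and invert via the computable $K\langle i,p\rangle:=(\langle\langle i,k\rangle,p\rangle)_k$. The reduction $D\leqSW E$ follows from parallelizing the computable embedding $\iota:\{0,1\}\to\overline{\{0,1\}}$ and applying it to an $E$-solution.

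The pair $E\equivSW\DIS$ then closes the cycle. For $\DIS\leqSW D$, I apply Theorem~\ref{thm:DIS}: Player~I plays $\langle i,p\rangle$ where $i$ codes a program halting on input $n$ iff $p$ carries a specific marker for $n$, starts with $p=0^\IN$, and reacts to each revealed bit $q(n)\in\{0,1\}$ of Player~II's $D$-output by appending a marker for $n$ exactly when $q(n)=1$, thereby forcing $\chi_{\W^p_i}(n)=q(n)$ at every committed coordinate so that Player~II cannot satisfy the $D$-condition. For $E\leqSW\DIS$, the analogue of $C\leqSW\DIS$, I would identify inputs via smn, choosing an index $j$ such that $\U\langle\langle j,\langle i,p\rangle\rangle,\varepsilon\rangle$ encodes the semi-decidable structure of $\W^p_i$ in a way that lets any $\DIS$-solution $q\in\IN^\IN$ be projected to a $\{0,1\}$-sequence differing from $\chi_{\W^p_i}$.

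The main obstacle I expect is precisely this last step $E\leqSW\DIS$: unlike $C\leqSW\DIS$, where both problems share the same vacuous-satisfaction escape when $\varphi^{\langle r,p\rangle}_i$ is not total, the target $\chi_{\W^p_i}$ in $E$ is always a total element of $\{0,1\}^\IN$, so there is no direct analogue of the ``any $q$ works'' escape on the $E$ side. The reduction must therefore set up the $\DIS$-instance so that the non-totality case of $\DIS$ either cannot arise for the relevant inputs or is still transformed, via the extraction from $\IN^\IN$ to $\{0,1\}^\IN$, into a valid $E$-solution.
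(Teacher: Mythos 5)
Your setup for steps (1)--(4) matches the paper's proof exactly, and your Wadge-game argument for $\DIS\leqSW D$ is a valid alternative to the paper's shorter route ($\DIS\equivSW\uwidehat{\ACC_\IN}\leqSW\uwidehat{\LPO}$ via Fact~\ref{fact:ACC-LLPO-LPO-CN} and monotonicity of stashing). But the proof stalls exactly where you say it does: $E\leqSW\DIS$ is left as an unresolved obstacle, and the sketch you give (``an index $j$ \ldots that lets any $\DIS$-solution \ldots be projected to a $\{0,1\}$-sequence differing from $\chi_{\W^p_i}$'') does not address the problem you yourself identify, namely that $\chi_{\W^p_i}$ is total so there is no vacuous escape.

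The missing idea is a coordinate-doubling trick. The paper reduces $E\leqSW C$ where $C:\IN^\IN\mto\{0,1\}^\IN$ is the $\{0,1\}$-valued variant of the discontinuity problem from Proposition~\ref{prop:Sigma-ACCn}, $C\langle i,p\rangle=\{q:(\exists n)\min(1,\varphi^p_i(n))\not=q(n)\}$. Given an $E$-instance $\langle i,p\rangle$, one computes $r(i)$ so that $\varphi^p_{r(i)}(n)=0$ if $2n\in\W^p_i$ is enumerated first, $=1$ if $2n+1\in\W^p_i$ is enumerated first, and diverges if neither $2n$ nor $2n+1$ is in $\W^p_i$. The outer function then spreads a $C$-answer $q$ across pairs: $H(q)(2n):=1-q(n)$, $H(q)(2n+1):=q(n)$. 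The point is that the three cases (defined with value $0$, defined with value $1$, undefined) each force a disagreement between $H(q)$ and $\chi_{\W^p_i}$ at position $2n$ or $2n+1$. In particular the ``undefined'' case, which is the one you correctly flag as dangerous, is handled because then both $2n$ and $2n+1$ lie outside $\W^p_i$ (so $\chi_{\W^p_i}$ is $0$ there), while $H(q)$ puts a $1$ at exactly one of the two positions. Without this (or some equivalent encoding that converts ``undefinedness of the oracle function'' into a concrete disagreement with a total characteristic function), the chain does not close.
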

\begin{proof}
We proceed as in the proof of Proposition~\ref{prop:Sigma-ACC}. It is easy to see that $\LPO\equivSW L$, which implies $\uwidehat{\LPO}\equivSW\uwidehat{L}$.
The same reductions that prove $\uwidehat{A}\equivSW B$ in Proposition~\ref{prop:Sigma-ACC} also show $\uwidehat{L}\equivSW D$.
The reduction $D\leqSW E$ follows using the computable embedding ${\iota:\{0,1\}\to\overline{\{0,1\}}}$.
By Proposition~\ref{prop:Sigma-ACC} and Fact~\ref{fact:ACC-LLPO-LPO-CN} and since stashing is an interior operator by Proposition~\ref{prop:stashing},
we obtain $\DIS\equivSW\uwidehat{\ACC_\IN}\leqSW\uwidehat{\LPO}$.

It only remains to show $E\leqSW\DIS$. By the proof of Proposition~\ref{prop:Sigma-ACCn} it suffices to show $E\leqSW C$, where
\[C:\IN^\IN\mto\{0,1\}^\IN,\langle i,p\rangle\mapsto\{q\in\{0,1\}^\IN:(\exists n)\;\min(1,\varphi^p_i(n))\not=q(n)\}\]
is the modification of the function from Proposition~\ref{prop:Sigma-ACC} that was used in the proof of Proposition~\ref{prop:Sigma-ACCn}
in order to show $\DIS\equivSW\DIS_2\equivSW C$. For the reduction $E\leqSW C$ we proceed as follows.
Given an instance $\langle i,p\rangle$ of $E$, we try to find out for each $n\in\IN$, which of the two consecutive values $2n, 2n+1$ appears in $\W^p_i$ first, if any.
More precisely, there is a computable function $r:\IN\to\IN$ such that
\[\varphi^p_{r(i)}(n)=\left\{\begin{array}{ll}
0 & \mbox{if $2n\in\W^p_i$ is found first}\\
1 & \mbox{if $2n+1\in\W^p_i$ is found first}\\
\uparrow & \mbox{if $\{2n,2n+1\}\cap\W^p_i=\emptyset$}
\end{array}\right.\]
holds for all $i,n\in\IN$ and $p\in\IN^\IN$.
We use the computable function $K:\IN^\IN\to\IN^\IN$ with $K\langle i,p\rangle=\langle r(i),p\rangle$ to translate instances 
of $E$ into instances of $C$ and the computable function $H:\{0,1\}^\IN\to\{0,1\}^\IN$ with $H(q)(2n):=1-q(n)$ and $H(q)(2n+1):=q(n)$
in order to translate solutions of $C$ into solutions of $E$. That this reduction is correct can be seen as follows.
Given an instance $\langle i,p\rangle$ of $E$ and $q\in C\langle r(i),p\rangle=CK\langle i,p\rangle$, there is some $n\in\IN$ with $\min(1,\varphi^p_{r(i)}(n))\not=q(n)$.
We are now in exactly one of the following three cases:
\begin{enumerate}
\item $\varphi^p_{r(i)}(n)=0\TO (q(n)=1$ and $2n\in\W^p_i)\TO H(q)(2n)=0\not=\chi_{\W^p_i}(2n)$,
\item $\varphi^p_{r(i)}(n)=1\TO (q(n)=0$ and $2n+1\in\W^p_i)$ \\
         $\TO H(q)(2n+1)=0\not=\chi_{\W^p_i}(2n+1)$,
\item $\varphi^p_{r(i)}(n)=\,\uparrow\;\TO(q(n)\in\{0,1\}$ and $\{2n,2n+1\}\cap\W^p_i=\emptyset)$\\
        $\TO(H(q)(2n)=1\not=\chi_{\W^p_i}(2n)$ or $H(q)(2n+1)=1\not=\chi_{\W^p_i}(2n+1))$.
\end{enumerate}
In any case we obtain $H(q)\in E\langle i,p\rangle$.
Altogether we obtain $E\leqSW C\equivSW\DIS$.
\end{proof}

By Fact~\ref{fact:parallelization-problem} we have $\widehat{\LPO}\equivSW\lim\equivSW\J$.
It is clear that $\J^\DD$ is strongly
Weihrauch equivalent to  
\[\J^\DD:\DD\mto\DD,a\mapsto\{b\in\DD:a'\leqT b\}.\]
We note that $\J^\DD\lW\lim$ follows  since
$\LPO\leqW\lim$, but $\LPO\nleqW\J^\DD$ by Corollary~\ref{cor:parallelizable-stashable}.
Altogether, we have established the pentagon of $\LPO$ given in Figure~\ref{fig:LPO-pentagon}.

\begin{figure}
\begin{tikzpicture}[scale=2]
\node[style={fill=\colorf},left] at (-1.03,0) {$\LPO$};
\node[fill,circle,scale=0.3] (f) at (-1,0) {};
\node[style={fill=\colorsf},below] at (-0.3,-0.981) {$\DIS$};
\node[fill,circle,scale=0.3] (Sf) at (-0.3,-0.951) {};
\node[style={fill=\colorpf},above] at (-0.3,0.981) {$\lim$};
\node[fill,circle,scale=0.3]  (Pf) at (-0.3,0.951) {};
\node[style={fill=\colorspf},right] at  (0.83,0.588)  {$\J^\DD$};
\node[fill,circle,scale=0.3] (SPf) at (0.8,0.588) {};
\node[style={fill=\colorpsf},right] at (0.83,-0.588) {$\NON$};
\node[fill,circle,scale=0.3]  (PSf) at (0.8,-0.588) {};
\draw[thick] (Pf) -- (f);
\draw[thick] (Pf) -- (SPf);
\draw[thick] (f) -- (Sf);
\draw[thick] (SPf) -- (PSf);
\draw[thick] (PSf) -- (Sf);
\node at (-0.75,0.585) {$\Pi$};
\node at (0.38,0.88) {$\Sigma$};
\node at (-0.75,-0.585) {$\Sigma$};
\node at (0.38,-0.88) {$\Pi$};
\end{tikzpicture}
\caption{$\LPO$ pentagon in the Weihrauch lattice.} 
\label{fig:LPO-pentagon}
\end{figure}
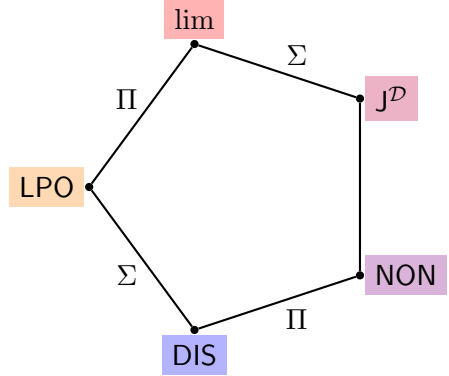

Proposition~\ref{prop:Sigma-LPO} also leads to another characterization of the discontinuity problem
in terms of ranges. This characterization is unique among all the characterizations that we have provided
because it is purely set-theoretic (i.e., no G\"odel  numberings or other computability-theoretic concepts are used)
and because it only involves standard data types (i.e., no completions are mentioned).

\begin{defi}[Range non-equality problem]
We call 
\[\NRNG:\IN^\IN\mto2^\IN,p\mapsto\{A\in2^\IN:A\not=\range(p-1)\}\]
the {\em range non-equality problem}.
\end{defi}

We now obtain the following characterization.

\begin{cor}[Range non-equality problem]
$\DIS\equivSW\NRNG$.
\end{cor}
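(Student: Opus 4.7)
The plan is to reduce the statement to what has already been proved about the problem $E$ from Definition~\ref{def:Sigma-LPO}, since by Proposition~\ref{prop:Sigma-LPO} we already know $E\equivSW\DIS$. The key observation is that, upon identifying $2^\IN$ with the powerset of $\IN$ via characteristic functions, the condition $A\neq\range(p-1)$ is exactly $(\exists n)\;\chi_A(n)\neq\chi_{\range(p-1)}(n)$, and the set $\range(p-1)\In\IN$ is c.e. uniformly in $p$. Hence $\NRNG$ looks structurally identical to $E$, with the fixed c.e.\ operator ``$p\mapsto\range(p-1)$'' in place of the G\"odel-indexed family $\W^p_i$.

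For $\NRNG\leqSW E$ I would fix a single computable index $r\in\IN$ such that $\W^p_r=\range(p-1)$ for every $p\in\IN^\IN$; this is straightforward, since on input $n$ the machine $\varphi^p_r(n)$ just searches $p$ for some position $k$ with $p(k)=n+1$ and halts iff one is found. The computable map $K:p\mapsto\langle r,p\rangle$ then sends an $\NRNG$-instance to an $E$-instance, and the output map is just the identity $\{0,1\}^\IN=2^\IN$: every $q\in E\langle r,p\rangle$ satisfies $\chi_{\W^p_r}(n)\neq q(n)$ for some $n$, whence $q\neq\range(p-1)$ as subsets of $\IN$.

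For the converse $E\leqSW\NRNG$, given an instance $\langle i,p\rangle$ of $E$ I would compute $\tilde p\in\IN^\IN$ with $\range(\tilde p-1)=\W^p_i$ in the obvious way: simulate the enumeration of $\W^p_i$ relative to $p$ stage by stage, and at each step either output $k+1$ if a new element $k$ has just been enumerated into $\W^p_i$ or output $0$ otherwise (recall $0-1=\varepsilon$ contributes nothing to $\tilde p-1$). This yields a total computable map $K:\langle i,p\rangle\mapsto\tilde p$, and once again the output map is the identity, since any $A\in\NRNG(\tilde p)$ differs from $\chi_{\W^p_i}$ at some coordinate and is therefore a solution of $E\langle i,p\rangle$.

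Combining both strong reductions with $E\equivSW\DIS$ from Proposition~\ref{prop:Sigma-LPO} gives $\NRNG\equivSW\DIS$. There is really no hard obstacle here; the only care needed is bookkeeping with the ``$-1$'' convention so that $\tilde p$ is always a total element of $\IN^\IN$ and the enumeration of $\range(\tilde p-1)$ genuinely equals $\W^p_i$ irrespective of whether $\W^p_i$ is empty, finite, or infinite.
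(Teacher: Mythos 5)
Your proof is correct and takes essentially the same route as the paper: both reduce the statement to showing $\NRNG\equivSW E$ (using Proposition~\ref{prop:Sigma-LPO}) and then exhibit the two strong reductions by fixing a G\"odel index $j$ with $\W^p_j=\range(p-1)$ in one direction, and a computable $p\mapsto\tilde p$ with $\range(\tilde p-1)=\W^p_i$ in the other, with identity outer maps. You merely spell out the constructions that the paper leaves implicit.
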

\begin{proof}
By Proposition~\ref{prop:Sigma-LPO} it suffices to show $\NRNG\equivSW E$.
For one, there is a $j\in\IN$ such that $\W^p_j=\range(p-1)$, which establishes the reduction $\NRNG\leqSW E$. 
On the other hand, there is a computable $f:\IN^\IN\to\IN^\IN$ such that $\range(f\langle i,p\rangle-1)=\W^p_i$ for all $p\in\IN^\IN$ and $i\in\IN$,
which shows $E\leqSW\NRNG$.
\end{proof}

The reader might have noticed that a lot of problems that occur in the lower parts of our pentagons 
can actually be seen as complementary problems of other well-known problems. We briefly make this more precise.

\begin{defi}[Complementary problem]
For every problem $f:\In X\mto Y$ we define the {\em complementary problem} $f^{\mathrm c}:\In X\mto Y$
by $\graph(f^\mathrm{c}):=\graph(f)^\mathrm{c}=(X\times Y)\setminus\graph(f)$.
\end{defi}

That is $\dom(f^\mathrm{c})=\{x\in X:f(x)\not=Y\}$ and $f(x):=Y\setminus f(x)$ for all $x\in\dom(f^\mathrm{c})$.
Using this concept we see that $\DIS=\U^\mathrm{c}$,  $\NRNG=\EC^\mathrm{c}$, and $\NON=(\geq_\mathrm{T})^\mathrm{c}$,
where $\geq_\mathrm{T}:\IN^\IN\mto\IN^\IN,p\mapsto\{q\in\IN^\IN:q\leqT p\}$.

Even though complementation yields a neat way of expressing these problems, $f\mapsto f^\mathrm{c}$ is not an operation on 
the Weihrauch lattice. For instance $\J^\mathrm{c}$ is obviously computable, whereas $\EC^\mathrm{c}\equivSW\DIS$ is not,
although $\J\equivSW\EC$ by Fact~\ref{fact:parallelization-problem}.

\section{Majorization and Hyperimmunity}
\label{sec:HYP}

In this section we study the stashing-parallelization pentagons of the non-majorization problem $\NMAJ$ that can be seen as an asymmetric version of the discontinuity problem.
The non-majorization problem $\NMAJ$ is introduced in the following definition 
and it is related to the well-known hyperimmunity problem.

\begin{defi}[Problems related to hyperimmunity]
\label{def:HYP}
We consider the following problems:
\begin{enumerate}
\item $\NGEQ:\IN^\IN\mto\IN,\langle\langle i,n\rangle,p\rangle\mapsto\{k\in\IN:\varphi_i^p(n)\not\geq k\}$,
\item $\NMAJ:\IN^\IN\mto\IN^\IN,\langle i,p\rangle\mapsto\{q\in\IN^\IN:(\exists n)\;\varphi_i^p(n)\not\geq q(n)\}$,
\item $\HYP:\IN^\IN\mto\IN^\IN,p\mapsto\{q\in\IN^\IN:(\forall r\leqT p)(\exists n)\;r(n)<q(n)\}$,
\item $\MEET:\IN^\IN\mto\IN^\IN,p\mapsto\{q\in\IN^\IN:(\forall r\leqT p)(\exists n)\;r(n)=q(n)\}$,
\item $1\mbox{-}\WGEN:\IN^\IN\mto\IN^\IN,p\mapsto\{q\in\IN^\IN:q$ is weakly $1$--generic relative to $p\}$. 
\end{enumerate}
\end{defi}

The {\em non-majorization problem} $\NMAJ$ has been defined here ad hoc, whereas the Weih\-rauch complexity of the {\em hyperimmunity problem} $\HYP$
and the {\em weak $1$--genericity problem} $1\mbox{-}\WGEN$ have already been studied in \cite{BHK17a,BHK18}.
The principle $\MEET$ was introduced in a reverse mathematics context in \cite{HRSZ17}.
We note that the existential quantifier ``$\exists n$'' in $\HYP$ and $\MEET$ could equivalently be replaced by ``$\exists^\infty n$''.
We recall that a point $p\in\IN^\IN$ is called
{\em weakly $1$--generic} relative to $q\in\IN^\IN$ if $p\in U$ for every dense open set $U\In\IN^\IN$ that is c.e.\ open relative to $q$.
A set $U\In\IN^\IN$ is {\em c.e.\ open relative to $q$} if $U=U_i^q:=\{p\in\IN^\IN:0\in\dom(\varphi_i^q)\}$ for some $i\in\IN$.
By a theorem of Kurtz the hyperimmune degrees coincide with the weakly $1$--generic degrees and this also holds uniformly
in the following sense~\cite[Corollary~9.5]{BHK17a}.

\begin{fact}[Uniform theorem of Kurtz]
\label{fact:HYP}
$\HYP\equivW 1\mbox{-}\WGEN$.
\end{fact}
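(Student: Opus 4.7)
The plan is to verify both Weihrauch reductions comprising the claimed equivalence.

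For $\HYP\leqSW 1\mbox{-}\WGEN$, I propose the identity reduction: forward $p$ to $1\mbox{-}\WGEN$ and return its output $q$ unchanged. Any $q$ weakly $1$--generic relative to $p$ automatically satisfies $q\in\HYP(p)$, since for every $r\leqT p$ the set $U_r := \{x\in\IN^\IN : (\exists n)\; x(n)>r(n)\}$ is dense open (any finite $\sigma$ extends by appending $r(|\sigma|)+1$) and c.e.\ open relative to $p$; weak $1$--genericity of $q$ then forces $q\in U_r$, which is exactly the defining clause of $\HYP(p)$.

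For $1\mbox{-}\WGEN\leqW\HYP$, I would again use the identity on inputs and construct $q^*$ weakly $1$--generic relative to $p$ uniformly from $p$ and a solution $q\in\HYP(p)$. First I upgrade the hypothesis from ``not majorized'' to ``not dominated'': if some $p$-computable $r$ satisfied $r(n)\geq q(n)$ for all $n\geq N$, then the $p$-computable function $r'(n) := \max(r(n),q(0),\ldots,q(N-1))$ (with $q(0),\ldots,q(N-1)$ absorbed as hard-coded constants into the program) would majorize $q$ everywhere, contradicting $\HYP(p)$. Next enumerate the $p$-c.e.\ open sets $V_0,V_1,\ldots$ and build $\sigma_0\preceq\sigma_1\preceq\ldots$ in $\IN^*$ with $q^* := \bigcup_s \sigma_s$: at stage $s$, attempt the least unsatisfied requirement $R_n$ (``meet $V_n$'', for $n\leq s$) by searching for an extension $\tau\succeq\sigma_s$ with $|\tau|\leq q(s)$ and $\tau\in V_n^p[q(s)]$; if such a $\tau$ is found, set $\sigma_{s+1} := \tau$ and mark $R_n$ satisfied, otherwise let $\sigma_{s+1} := \sigma_s$.

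The main verification — and the central obstacle — is that every requirement $R_n$ with $V_n^p$ dense open is eventually satisfied, proceeding by induction on $n$. Assuming each $R_k$ with $k<n$ is satisfied by some stage $s_0$, past $s_0$ the requirement $R_n$ is always the lowest unsatisfied one as long as it remains pending, and $\sigma_s$ then remains equal to some fixed $\sigma^* := \sigma_{s_0}$. If $R_n$ were never satisfied, then $q(s)<f(n,\sigma^*)$ for all $s>s_0$, where $f(n,\sigma^*)$ is the (defined, since $V_n^p$ is dense open) finite modulus of $V_n^p$ above $\sigma^*$; but then $q$ is eventually bounded by the constant $f(n,\sigma^*)$, hence dominated by a $p$-computable function, contradicting the upgraded hypothesis. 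The apparent coupling between the history-dependent modulus $f(n,\sigma)$ and the non-domination property of $q$ is thus resolved because the induction freezes the relevant string $\sigma^*$, reducing the modulus to a mere constant against which $q$ cannot be bounded.
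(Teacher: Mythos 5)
The paper does not prove this Fact itself; it is cited from the literature, so I can only assess your blind attempt on its own terms.

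Your reduction $\HYP\leqSW 1\mbox{-}\WGEN$ is correct: for every total $r\leqT p$ the set $U_r=\{x:(\exists n)\,x(n)>r(n)\}$ is dense open and $p$-c.e.\ open, so a weakly $1$-generic $q$ relative to $p$ lies in each $U_r$ and hence in $\HYP(p)$. Your upgrade from ``not majorized'' to ``not dominated'' is also sound, since the finitely many values $q(0),\dots,q(N-1)$ can be hard-coded.

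The finite-extension construction for $1\mbox{-}\WGEN\leqW\HYP$, however, has a genuine gap. The enumeration $V_0^p,V_1^p,\dots$ of \emph{all} $p$-c.e.\ open sets unavoidably contains non-dense ones. Your rule ``attempt the least unsatisfied $R_n$, and if no suitable $\tau$ is found set $\sigma_{s+1}:=\sigma_s$'' means that as soon as the least unsatisfied $R_n$ has $[\sigma_s]\cap V_n^p=\emptyset$ (which happens as soon as some $V_n^p$ is not dense above the current string), the construction is stuck at $R_n$ forever: no later requirement is ever attempted, the string never changes, and $q^*$ is a finite string rather than an element of $\IN^\IN$. Your inductive verification correspondingly assumes ``each $R_k$ with $k<n$ is satisfied by some stage $s_0$,'' which is simply false when some $V_k^p$ with $k<n$ is not dense (weak $1$-genericity does not require meeting such $V_k^p$, but your construction demands it before it will look at $R_n$). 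Repairing this is not a one-line change, because the modulus argument you invoke --- that $q(s)<f(n,\sigma^*)$ with $\sigma^*$ a \emph{fixed} string, so the bound is a constant --- relies precisely on the string being frozen once all lower requirements have acted; if you instead cycle through all unsatisfied $R_m$ per stage or otherwise skip stuck requirements, the string $\sigma_s$ keeps changing and the modulus $f(n,\sigma_s)$ is no longer a $p$-computable function of $s$ alone. You would need to control the growth of $|\sigma_s|$ by a $p$-computable function of $s$ (and use a two-variable modulus such as $G_n(\ell):=\max\{f(n,\sigma):|\sigma|\leq\ell\}$), or find another mechanism that both bypasses non-dense $V_n^p$ and keeps the comparison between $q$ and a genuinely $p$-computable function. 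As written, the proposal does not yet establish $1\mbox{-}\WGEN\leqW\HYP$.
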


On the first sight, the non-majorization problem $\NMAJ$ looks similar to the discontinuity problem $\DIS$
in the form of $C$, as defined in Definition~\ref{def:Sigma-ACC}. In fact, $\NMAJ$ can be seen as an asymmetric
version of $C$, since the inequality $\not=$ is simply replaced by $\not\geq$ (we note that $\not\geq$ is not the same as $<$ here, 
as $\varphi_i^p$ might be partial and $\varphi_i^p(n)\not\geq q(n)$ is supposed to mean that either $\varphi_i^p(n)$ does not exist or
$\varphi_i^p(n)$ exists and $\varphi_i^p(n)< q(n)$.)
Despite the similarity between $\NMAJ$ and $\DIS$, it turns out that $\NMAJ$ is neither equivalent to $\DIS$ nor stashable. 
Among all the problems that we have studied here, it is perhaps the one that comes closest to $\DIS$ without being equivalent
to it. The following result clarifies the relation of these problems to each other.

\begin{prop}[The non-majorization problem]
\label{prop:NMAJ}
We obtain
$\uwidehat{\NMAJ}\equivW\uwidehat{\NGEQ}\equivW\DIS$, $\widehat{\NMAJ}\equivSW\HYP$, $\DIS\lW\NMAJ\lW\HYP$, and $\NON\lW\HYP^\DD$.
In particular, $\NMAJ$ is not stashable.
\end{prop}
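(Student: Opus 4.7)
The plan is to verify each assertion in turn, largely following the blueprints of Propositions~\ref{prop:Sigma-ACC} and~\ref{prop:Sigma-LPO}, Theorem~\ref{thm:DIS-NON}, and the Wadge-game characterization of Theorem~\ref{thm:DIS}.

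For $\uwidehat{\NMAJ}\equivW\uwidehat{\NGEQ}\equivW\DIS$, I would first establish $\uwidehat{\NGEQ}\equivSW\uwidehat{\NMAJ}$ by imitating the reductions $\uwidehat{A}\equivSW B\equivSW C$ from Proposition~\ref{prop:Sigma-ACC}, with ``$\neq$'' replaced by ``$\not\geq$'': a sequence of $\NGEQ$-instances folds into a single $\NMAJ$-instance via a universal index $j$ with $\varphi_j^{\langle p\rangle}(k)=\varphi_{i_k}^{p_k}(n_k)$, and the $\overline{\IN}^\IN$-vs-$\IN^\IN$ mismatch on the output side is handled by the computable embedding $\iota\colon\IN\hookrightarrow\overline{\IN}$ combined with the idempotence of stashing. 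For $\DIS\leqSW\uwidehat{\NGEQ}$ I would invoke Theorem~\ref{thm:DIS}: Player I plays $p_k=000\ldots$ on every coordinate so that all $\varphi_{i_k}^{p_k}(n_k)$ are undefined; as soon as Player II commits to a natural number $k_{n^*}\in\IN$ at coordinate $n^*$, Player I appends to $p_{n^*}$ the code that forces $\varphi_i^{p_{n^*}}(n_{n^*})=k_{n^*}$, invalidating that commitment. For the reverse $\uwidehat{\NGEQ}\leqW\DIS$ I would route through $\uwidehat{L}\equivSW\DIS$ from Proposition~\ref{prop:Sigma-LPO}: each $\NGEQ$-instance $\langle\langle i_k,n_k\rangle,p_k\rangle$ translates into the $L$-instance asking whether $n_k\in\W_{i_k}^{p_k}$, and a $\uwidehat{L}$-output $(b_k)\in\overline{\{0,1\}}^\IN$ is transformed into a $\uwidehat{\NGEQ}$-output $(m_k)\in\overline{\IN}^\IN$ by outputting $1$ on $b_k=1$, by waiting for $\varphi_{i_k}^{p_k}(n_k)$ to halt and outputting its value $+1$ on $b_k=0$, and by outputting $\bot$ on $b_k=\bot$; precompleteness of the completion makes this a genuine computable operation on names.

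For $\widehat{\NMAJ}\equivSW\HYP$ I would follow the pattern of Theorem~\ref{thm:DIS-NON}. The direction $\widehat{\NMAJ}\leqSW\HYP$: bundle the oracles of the input sequence $(\langle i_n,p_n\rangle)_n$ into a single $p:=\langle p_n\rangle$, invoke $\HYP(p)$ to obtain a $q$ defeating every total $r\leqT p$, and output the constant sequence $(q,q,\ldots)$---if $\varphi_{i_n}^{p_n}$ is total then it is $\leqT p$ and hence defeated by $q$, whereas if it is partial the $\NMAJ$-condition is automatically satisfied. The direction $\HYP\leqSW\widehat{\NMAJ}$: apply $\widehat{\NMAJ}$ to $(\langle i,p\rangle)_{i\in\IN}$, collect $(q_i)_i$, and set $q(m):=\max_{i\leq m}q_i(m)+1$; for any $r\leqT p$ the padding lemma supplies arbitrarily large codes $i$ with $\varphi_i^p=r$, and the corresponding $q_i$ witnesses some $m^*\geq i$ with $r(m^*)<q_i(m^*)$, yielding $q(m^*)>r(m^*)$.

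The upper bounds in $\DIS\lW\NMAJ\lW\HYP$ are straightforward: $\DIS\leqSW\NMAJ$ because any $q$ with $\varphi_i^p(n)\not\geq q(n)$ in particular satisfies $q(n)\neq\varphi_i^p(n)$, and $\NMAJ\leqW\widehat{\NMAJ}\equivSW\HYP$ by extensivity. For strictness of $\DIS\lW\NMAJ$, if $\NMAJ\leqW\DIS$ held then parallelizing would yield $\HYP\equivSW\widehat{\NMAJ}\leqW\widehat{\DIS}\equivSW\NON$, contradicting the existence of non-computable hyperimmune-free degrees (from which no hyperimmune set can be computed). For strictness of $\NMAJ\lW\HYP$, suppose $\HYP\leqW\NMAJ$ via $H,K$ and fix any $p$; writing $K(p)=\langle i,p'\rangle$, in either case---$\varphi_i^{p'}$ total with $q:=\varphi_i^{p'}+1$, or $\varphi_i^{p'}$ partial with $q:=0^\omega$---there is a valid $\NMAJ$-solution $q\leqT p'\leqT p$, forcing $H\langle p,q\rangle\leqT p$ and contradicting hyperimmunity relative to $p$. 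Finally, $\NON\leqW\HYP^\DD$ is trivial since hyperimmunity relative to $p$ implies non-$\leqT p$; strictness follows by taking $p$ computable and letting $\NON(p)$ land in a non-computable hyperimmune-free degree, from which nothing hyperimmune can be computed. The concluding non-stashability of $\NMAJ$ is the immediate corollary $\uwidehat{\NMAJ}\equivW\DIS\lW\NMAJ$. The main obstacles are the Wadge-game construction for $\DIS\leqSW\uwidehat{\NGEQ}$, where Player~I must respond computably to each successive commitment of Player~II while coping with indefinite $\bot$-stalling, and the strictness $\HYP\not\leqW\NMAJ$, which requires exhibiting a $p'$-computable $\NMAJ$-solution uniformly in both the total and partial cases.
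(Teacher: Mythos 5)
Your overall structure is sound, and most of the individual claims are argued correctly, but there is one genuine gap and one place where you take a much harder road than necessary.

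\textbf{The gap: $\HYP\leqSW\widehat{\NMAJ}$.} You feed $(\langle i,p\rangle)_{i\in\IN}$ directly to $\widehat{\NMAJ}$, obtain $(q_i)_i$, and set $q(m):=\max_{i\leq m}q_i(m)+1$; you then assert that ``the corresponding $q_i$ witnesses some $m^*\geq i$.'' But the defining condition of $\NMAJ$ is just ``$(\exists n)\;\varphi_i^p(n)\not\geq q_i(n)$'' with no lower bound on the witness, and an adversarial realizer can always park the \emph{only} witness at position $0$. Concretely, take $p$ computable and a realizer that outputs $q_i=(\varphi_i^p(0)+1,0,0,\ldots)$ when $\varphi_i^p(0)\!\downarrow$ and $q_i=(1,0,0,\ldots)$ otherwise; every $q_i$ is a legitimate $\NMAJ$-solution with unique witness $n=0$. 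Then $q(0)=q_0(0)+1$ and $q(m)=1$ for $m\geq 1$, so $q$ is computable and certainly not in $\HYP(p)$. The padding lemma does not rescue this: it gives arbitrarily large $i$ with $\varphi_i^p=r$, but all of their witnesses can be at position $0<i$, and those $q_i$ never enter your running maximum. The paper avoids the problem by \emph{shifting before asking}: by the smn-theorem there is computable $s$ with $\varphi_{s(i)}^p(n)=\varphi_i^p\langle i,n\rangle$, and one feeds $(\langle s(i),p\rangle)_i$ to $\widehat{\NMAJ}$ and takes $q:=\langle q_0,q_1,\ldots\rangle$. Now the witness $n$ for coordinate $i$ lands automatically at position $\langle i,n\rangle$ of $q$, which is exactly $q_i(n)$, and the coordinate $i$ sees the entire range of $r=\varphi_i^p$ (not just a tail), so no bound on the witness is needed. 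You need this shift (or an equivalent device) for the argument to go through.

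\textbf{The unnecessarily hard road: $\uwidehat{\NMAJ}\equivW\uwidehat{\NGEQ}\equivW\DIS$.} Your plan --- reprove a Wadge-game strategy for $\DIS\leqSW\uwidehat{\NGEQ}$ and route $\uwidehat{\NGEQ}\leqW\DIS$ through $\uwidehat{L}$ --- can be made to work (your translation of the $\uwidehat{L}$-output is correct once one keeps firmly in mind that stashing demands a coordinate where the output \emph{agrees} with $L$, not one where it disagrees), but none of it is needed. The paper simply observes the chain $\DIS\equivSW C\leqSW\NMAJ\leqSW\NGEQ\leqW\LPO$ (the first because a solution of $\NMAJ\langle i,p\rangle$ is a fortiori a solution of $C\langle i,p\rangle$, the middle one by asking $\NGEQ$ only at $n=0$ and padding the output, the last by using $\LPO$ to decide whether $\varphi_i^p(n)$ halts) and then applies the interior operator $\Sigma$: since $\uwidehat{C}\equivW\DIS$ and $\uwidehat{\LPO}\equivSW\DIS$ by Propositions~\ref{prop:Sigma-ACC} and~\ref{prop:Sigma-LPO}, everything in between collapses to $\DIS$ up to $\equivW$. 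This sandwich argument also directly hands you $\uwidehat{\NMAJ}\equivW\uwidehat{\NGEQ}$ for free and requires no new Wadge game.

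The remaining pieces --- $\widehat{\NMAJ}\leqSW\HYP$ by bundling oracles, $\DIS\lW\NMAJ$ via hyperimmune-free degrees after parallelizing, $\NMAJ\lW\HYP$ from the observation that $\NMAJ$ always has an input-computable solution, $\NON\lW\HYP^\DD$ from a hyperimmune-free realizer of $\NON$, and the non-stashability of $\NMAJ$ as an immediate corollary --- are all correct and essentially the paper's arguments.
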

\begin{proof}
With $C$ from Definition~\ref{def:Sigma-ACC} we obtain $\DIS\equivSW C\leqSW\NMAJ\leqSW \NGEQ\leqW\LPO$. 
The latter reduction holds since $\LPO$ can be used to determine whether $\varphi_i^p(n)$ is defined
and if it is defined then one can use the original input to find a larger value; otherwise $0$ is a suitable output.
This implies $\DIS\equivW\uwidehat{\NGEQ}\equivW\uwidehat{\NMAJ}$ by Proposition~\ref{prop:Sigma-LPO} since
stashing is an interior operator by Proposition~\ref{prop:stashing}.
Moreover, we have
\[\HYP(p)=\{q\in\IN^\IN:(\forall r\leqT p)(\exists n)\;r(n)<q(n)\}=\{q\in\IN^\IN:(\forall i)(\exists n)\;\varphi_i^p(n)\not\geq q(n)\}.\]
Here clearly ``$\supseteq$'' holds regarding the second equality since the $\varphi_i^p$ include all the total $r\leqT p$ and the other inclusion ``$\In$''
holds as for $\varphi_i^p$ that are not total the condition $\varphi_i^p(n)\not\geq q(n)$ is satisfied by definition for all $n\not\in\dom(\varphi_i^p)$. 
We claim that $\widehat{\NMAJ}\equivSW\HYP$. 
For one, it is clear that $\NMAJ\leqSW\HYP$ holds. Moreover, $\HYP$ is strongly parallelizable,
as there is some computable $r:\IN\to\IN$ with $\varphi_{r\langle i,k\rangle}^{\langle p_0,p_1,p_2,...\rangle}(n)=\varphi_i^{p_k}(n)$ for all $i,n,k\in\IN$ and $p_0,p_1,...\in\IN^\IN$.
Together, this implies $\widehat{\NMAJ}\leqSW\HYP$.
On the other hand, there is a computable $s:\IN\to\IN$ such that $\varphi_{s(i)}^p(n)=\varphi^p_i\langle i,n\rangle$ for all $i,n\in\IN$ and $p\in\IN^\IN$.
Hence, the function $K:\IN^\IN\to\IN^\IN$ with $K(p):=\langle\langle s(0),p\rangle,\langle s(1),p\rangle,\langle s(2),p\rangle,...\rangle$ is computable and
with $q=\langle q_0,q_1,q_2,...\rangle\in\widehat{\NMAJ}\circ K(p)$ we obtain 
\[(\forall i)(\exists n)\;\varphi_i^p\langle i,n\rangle=\varphi^p_{s(i)}(n)\not\geq q_i(n)= q\langle i,n\rangle,\]
so in particular $q\in\HYP(p)$. This proves $\HYP\leqSW\widehat{\NMAJ}$.

Suppose $\DIS\equivW\NMAJ$, then $\NON\equivW\HYP$ would follow by Theorem~\ref{thm:DIS-NON}, since parallelization is a closure operator.
However, it is well-known that there are hyperimmune-free non-computable degrees~\cite[Section~2]{MM68}, i.e., there 
is non-computable $q$ which is not of hyperimmune degree.
The problem $\NON$ has a realizer that, on computable inputs, produces such non-computable $q$,
which is not of hyperimmune degree.
Since hyperimmune degrees are upwards closed by \cite[Theorem~1.1]{MM68}, 
we obtain that $\NON\lW\HYP^\DD\leqW\HYP$.
This implies $\DIS\lW\NMAJ$.
Finally, we clearly have $\NMAJ\lW\HYP$ as $\NMAJ$ has computable solutions on all instances,
while $\HYP$ does not. 
\end{proof}

We emphasize that we have only proved $\uwidehat{\NMAJ}\equivW\DIS$ with an ordinary
Weihrauch equivalence, unlike in all previous cases, where we have established a strong Weihrauch equivalence.
Hence we are left with the following open question.

\begin{qu}
Does $\uwidehat{\NMAJ}\equivSW\DIS$ hold?
\end{qu}

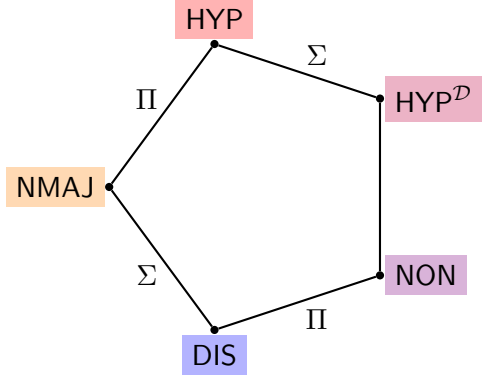
\begin{figure}
\begin{tikzpicture}[scale=2]
\node[style={fill=\colorf},left] at (-1.03,0) {$\NMAJ$};
\node[fill,circle,scale=0.3] (f) at (-1,0) {};
\node[style={fill=\colorsf},below] at (-0.3,-0.981) {$\DIS$};
\node[fill,circle,scale=0.3] (Sf) at (-0.3,-0.951) {};
\node[style={fill=\colorpf},above] at (-0.3,0.981) {$\HYP$};
\node[fill,circle,scale=0.3]  (Pf) at (-0.3,0.951) {};
\node[style={fill=\colorspf},right] at  (0.83,0.588)  {$\HYP^\DD$};
\node[fill,circle,scale=0.3] (SPf) at (0.8,0.588) {};
\node[style={fill=\colorpsf},right] at (0.83,-0.588) {$\NON$};
\node[fill,circle,scale=0.3]  (PSf) at (0.8,-0.588) {};
\draw[thick] (Pf) -- (f);
\draw[thick] (Pf) -- (SPf);
\draw[thick] (f) -- (Sf);
\draw[thick] (SPf) -- (PSf);
\draw[thick] (PSf) -- (Sf);
\node at (-0.75,0.585) {$\Pi$};
\node at (0.38,0.88) {$\Sigma$};
\node at (-0.75,-0.585) {$\Sigma$};
\node at (0.38,-0.88) {$\Pi$};
\end{tikzpicture}
\caption{Non-marjorization pentagon in the Weihrauch lattice.} 
\label{fig:NMAJ-pentagon}
\end{figure}

We note that $\HYP^\DD$ is exactly the problem of finding a hyperimmune degree relative to the input, i.e.,
it can equivalently be described as
\[\HYP^\DD:\DD\mto\DD,a\mapsto\{b\in\DD:b\mbox{ is of hyperimmune degree relative to $a$}\},\]
since hyperimmune degrees are upwards closed by \cite[Theorem~1.1]{MM68}.
This establishes the pentagon in Figure~\ref{fig:NMAJ-pentagon}, except that we did not yet
prove $\HYP^\DD\lW\HYP$. In the case of the earlier pentagons discussed here, we have used
Corollary~\ref{cor:parallelizable-stashable} for the corresponding separation. 
In the case of the hyperimmunity problem a more tailor-made argument is required,
since $\NMAJ$ has no natural number output and is densely realized itself.
We combine ideas from the proof of \cite[Proposition~6.3]{BP18} and the proof of Proposition~\ref{prop:Sigma-ACC}.

\begin{prop}
\label{prop:HYP}
$\HYP^\DD\lW\HYP$ (even restricted to computable instances).
\end{prop}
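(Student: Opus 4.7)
The direction $\HYP^\DD\leqW\HYP$ is immediate from Proposition~\ref{prop:Turing-cone-interior}(1), so the content is the strict inequality, even when both problems are restricted to computable inputs. I would argue by contradiction: suppose $\HYP\leqW\HYP^\DD$ via computable $H,K$, valid on every computable input. Fix the computable name $p_0:=000\ldots$ and set $\Psi(r):=H\langle p_0,r\rangle$, a partial computable function from $\IN^\IN$ to $\IN^\IN$. Since $K(p_0)$ is a computable name, every hyperimmune Turing degree is a valid output of $\HYP^\DD$ on this instance, so for every $r\in\{0,1\}^\IN$ whose Turing degree is hyperimmune, $\Psi(r)$ must be defined and must name a hyperimmune function (i.e., a function that is not dominated by any computable function).

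The plan is to produce a $0$-$1$-valued $r$ of hyperimmune degree for which $\Psi(r)$ \emph{is} dominated by a computable function, by a measure-theoretic construction. By Kurtz's theorem every Martin-L\"of random element of $\{0,1\}^\IN$ has hyperimmune Turing degree, and the set of Martin-L\"of randoms has Lebesgue measure one in Cantor space, so the set $V:=\dom(\Psi)\cap\{0,1\}^\IN$ has measure one as well. Writing $V=\bigcap_n U_n$ with $U_n:=\{r\in\{0,1\}^\IN:\Psi(r)(n)\!\downarrow\}$ a uniformly c.e.\ open subset of Cantor space of measure one, I would enumerate enough basic cylinders of $U_n$ to form a clopen set $F_n\subseteq U_n$ with rational measure strictly greater than $1-2^{-n-2}$; since $\mu(U_n)=1$ is a computable real and the measures of finite unions of basic cylinders in Cantor space are computable rationals, such an $F_n$ can be found effectively and uniformly in $n$.

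Let $F:=\bigcap_n F_n$. Then $F$ is a $\pO{1}$ class in $\{0,1\}^\IN$ with $\mu(F)>1/2$, and $F\subseteq V$, so $\Psi$ is total on $F$. Since $F$ has positive measure it contains a Martin-L\"of random $r$, which by Kurtz's theorem is of hyperimmune Turing degree. Since $F$ is a computably compact $\pO{1}$ class and $\Psi|_F$ is computable and total, the pointwise bound $g(n):=\max\{\Psi(s)(n):s\in F\}$ is a computable function, so in particular $\Psi(r)(n)\leq g(n)$ for all $n$. This contradicts the fact that $\Psi(r)$ must be a hyperimmune function (for $p_0$ computable, membership in $\HYP(\delta(p_0))$ means exactly not being dominated by any computable function), completing the proof.

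The main technical obstacle I expect is the effective inner approximation of $U_n$ by clopens of large measure: it requires $U_n$ to be a uniformly c.e.\ open subset of a computably compact space whose total measure is the computable real $1$, so that accumulated rational measures of finite unions of basic cylinders can be computed and a precise cut-off point chosen. The rest of the argument follows the general pattern of combining a compactness-based bound (in the spirit of \cite[Proposition~6.3]{BP18}) with the randomness--hyperimmunity link of Kurtz's theorem, and parallels the constructive reasoning used in the Wadge-game part of the proof of Proposition~\ref{prop:Sigma-ACC}.
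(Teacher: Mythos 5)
Your approach is genuinely different from the paper's: the paper proves the stronger separation $\NMAJ\nleqW\HYP^\DD$ via a finite-extension construction that exploits a computable modulus of $H$ to build a computable instance of $\NMAJ$ defeating the reduction, whereas you argue directly that $\HYP\nleqW\HYP^\DD$ by a measure-and-compactness argument, trapping $\Psi:=H\langle p_0,\cdot\rangle$ inside a positive-measure $\pO{1}$ class where it is total (and hence computably bounded by effective compactness of $\{0,1\}^\IN$), and then intersecting that class with a conull set of hyperimmune-degree reals. Your conclusion is formally weaker (the paper's $\NMAJ\nleqW\HYP^\DD$ implies yours, since $\NMAJ\leqSW\HYP$) but it fully suffices for the proposition, and the measure-theoretic route is a clean and instructive alternative to the finite extension method.

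There is, however, a genuine error in the measure-theoretic justification. You invoke ``Kurtz's theorem: every Martin-L\"of random element of $\{0,1\}^\IN$ has hyperimmune Turing degree'' both to deduce $\mu(V)=1$ and to extract a hyperimmune-degree $r$ from $F$. That statement is false: there exist $1$-random reals of hyperimmune-free degree (take a nonempty $\pO{1}$ class consisting entirely of $1$-randoms, such as the complement of the first level of a universal Martin-L\"of test, and apply the Jockusch--Soare hyperimmune-free basis theorem). What you actually need is the genuine Kurtz result that the set of reals of hyperimmune Turing degree is conull, equivalently that every $2$-random real has hyperimmune degree. With that substitution both uses of the claim go through unchanged: the conull set of hyperimmune-degree reals lies inside $V$, so $\mu(V)=1$; and since $\mu(F)\geq 1/2>0$, the $\pO{1}$ class $F$ must meet that conull set, giving the desired $r\in F$ of hyperimmune degree whose image $\Psi(r)$ is nevertheless dominated by the computable bound $g$.
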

\begin{proof}
Here we consider $\HYP^\DD$ to be defined as 
\[\HYP^\DD:\IN^\IN\mto\IN^\IN,p\mapsto\{s\in\IN^\IN:(\exists q\leqT s)(\forall r\leqT p)(\exists n)\;r(n)<q(n)\}.\]
It suffices to prove $\NMAJ\nleqW\HYP^\DD$. Let us assume the contrary, i.e., let $H,K:\In\IN^\IN\to\IN^\IN$
be computable functions such that $H\langle\id,GK\rangle$ is a realizer of $\NMAJ$ whenever $G$ is a realizer of $\HYP^\DD$.
Then there is a computable monotone function $h:\IN^*\to\IN^*$ that approximates $H$ in the sense that
$H(q)=\sup_{w\prefix q}h(w)$ for all $q\in\dom(H)$. As in the proof of Proposition~\ref{prop:Sigma-ACC}
we consider a fixed G\"odel number $i\in\IN$ such that $\varphi_i^p(n)=k$ if and only if $\langle n,k\rangle+1$ is the first number of the form $\langle n,m\rangle+1$ listed in $p$.
Now we use $h$ and the finite extension method 
to construct an input $ip=\langle i,p\rangle\in\IN^\IN$ of $\NMAJ$ on which the above reduction fails.
We start with $p_0:=000...$, which yields the nowhere defined function $\varphi_i^{p_0}$. For this $p_0$ there is a lexicographically first $w_0\in\{2,3,4,...\}^*$
such that $|h\langle ip_0|_{|w_0|},d_0w_0\rangle|>0$ for both values $d_0\in\{0,1\}$. 
This is because there is some $s\in\{2,3,4,...\}^\IN$ with $d_0s\in\HYP^\DD K\langle i,p_0\rangle$ for both values $d_0\in\{0,1\}$. 
We choose 
\[a_0:=\max\{h\langle ip_0|_{|w_0|},d_0w_0\rangle(0):d_0\in\{0,1\}\}\mbox{ and }b_0:=\langle 0,a_0\rangle+1\]
and we continue
with $p_1:=0^{|w_0|}b_0000...$, which yields a function that satisfies $\varphi_i^{p_1}(0)=a_0$ and is undefined otherwise.
Again there is a lexicographically first $w_1\in\{2,3,4,...\}^*$  such that $|h\langle ip_1|_{|w_0|+|w_1|+1},d_0w_0d_1w_1\rangle|>1$ for all values $d_0,d_1\in\{0,1\}$
and now we choose
\[a_1:=\max\{h\langle ip_1|_{|w_0|+|w_1|+1},d_0w_0d_1w_1\rangle(1):d_0,d_1\in\{0,1\}\}\mbox{ and }b_1:=\langle 1,a_1\rangle+1.\]
The next input is $p_2:=0^{|w_0|}b_00^{|w_1|}b_1000...$, which represents a function $\varphi_i^{p_2}$ 
that satisfies $\varphi_i^{p_2}(j)=a_j$ for $j\in\{0,1\}$ and that is undefined otherwise. We continue the construction inductively
and obtain computable sequences $(p_n)_{n\in\IN}$ in $\IN^\IN$ and $(w_n)_{n\in\IN}$ in $\IN^*$ in this way.
The sequence $(p_n)_{n\in\IN}$ converges to a computable $p\in\IN^\IN$.
This is because the construction above only depends on the computable function $h$ and yields longer and longer portions of $p$.
For every $d\in\{0,1\}^\IN$ we denote by $s_d$ the sequence $s_d:=d_0w_0d_1w_1d_2w_2...$ with $d_j:=d(j)$.
The construction ensures that $H\langle ip,s_d\rangle(n)\leq\varphi_i^p(n)$ for every $n\in\IN$ and $d\in\{0,1\}^\IN$
and hence $H\langle\langle i,p\rangle,s_d\rangle\not\in\NMAJ\langle i,p\rangle$ for every $d\in\{0,1\}^\IN$. On the other hand,
there is some $d\in\{0,1\}^\IN$ of hyperimmune degree, which implies $s_d\in\HYP^\DD K\langle i,p\rangle$,
since $d\leqT s_d$.
This yields a contradiction to the assumption and hence $\HYP^\DD\lW\HYP$.
\end{proof}

Hence, there is no uniform computable method
to find a hyperimmune $q\in\IN^\IN$ from an arbitrary member of a hyperimmune degree.
This also yields a second proof of $\NMAJ\nleqW\DIS$.
Finally, we mention that the separation in Proposition~\ref{prop:HYP} also yields a separation of the 
corresponding problems (i.e., the sets given by the respective solutions on computable instances) in the Medvedev lattice (see \cite[Theorem~9.1]{BGP21}).

We now want to show that $\MEET$ is equivalent to $\HYP$. The corresponding proof of \cite[Theorem~38]{HRSZ17} can be transferred into our setting.
On the first sight it might be a bit surprising that replacing $\not=$ by $\not\geq$ makes a difference, while replacing 
$<$ by $=$ does not. However, this comparison does not take the aspect of totality into account.
For completeness we include the proof, which is interesting by itself.

\begin{prop}
\label{prop:meet}
$\HYP\equivW\MEET$.
\end{prop}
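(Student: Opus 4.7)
The plan is to prove the two reductions separately, exploiting the uniform Kurtz theorem (Fact~\ref{fact:HYP}) for one direction.

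For the easier direction $\HYP \leqSW \MEET$, I would simply use the identity reduction. Given $p$ and a purported solution $q \in \MEET(p)$, I want to show $q \in \HYP(p)$. Given any total $r \leqT p$, the pointwise shift $r + 1$ is again total and $p$--computable, so $q$ meets $r + 1$ at some position, i.e.\ there is $n$ with $q(n) = r(n) + 1$; this $n$ witnesses $q(n) > r(n)$, so $q \in \HYP(p)$. No post-processing is needed, so this reduction is strong.

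For the reverse direction $\MEET \leqW \HYP$, my plan is to route through the weak $1$--genericity problem. By Fact~\ref{fact:HYP} we have $1\mbox{-}\WGEN \equivW \HYP$, so it suffices to establish $\MEET \leqSW 1\mbox{-}\WGEN$, again via the identity reduction. Given $p$ and a sequence $q$ that is weakly $1$--generic relative to $p$, I claim $q \in \MEET(p)$. Let $r \leqT p$ be total and consider
\[U_r := \{q' \in \IN^\IN : (\exists n)\; q'(n) = r(n)\}.\]
This set is open; it is c.e.\ open relative to $p$ since the basic open sets $[w]$ contained in $U_r$ are precisely those whose word $w$ contains some position $n < |w|$ with $w(n) = r(n)$, and these $w$ are enumerable from $p$ because $r \leqT p$; and it is dense, since any finite prefix $w$ can be extended by the single symbol $r(|w|)$ to land inside $U_r$. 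Weak $1$--genericity of $q$ relative to $p$ then forces $q \in U_r$, which supplies the required $n$ with $q(n) = r(n)$. Composing with $1\mbox{-}\WGEN \leqW \HYP$ yields $\MEET \leqW \HYP$.

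The main subtlety I expect will be bookkeeping around the paper's convention for c.e.\ open sets relative to $p$, in particular verifying that the enumeration of prefixes witnessing membership in $U_r$ can be carried out uniformly in $p$ (without demanding totality information about $r$ beyond the values actually used in the witness). Once that is checked, combining the two reductions gives $\HYP \equivW \MEET$, with the caveat that the reverse direction is only an ordinary, not strong, reduction because the equivalence $\HYP \equivW 1\mbox{-}\WGEN$ supplied by Fact~\ref{fact:HYP} is only $\equivW$.
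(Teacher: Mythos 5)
Your proof is correct, but the harder direction takes a genuinely different route from the paper's. For $\MEET\leqW\HYP$, the paper gives a direct reduction: given $p$ and $q\in\HYP(p)$, it uses the smn-theorem together with the time-complexity functions $\Phi_i^p$ to build an output
\[r\langle i,n\rangle:=\left\{\begin{array}{ll}\varphi^p_i\langle i,n\rangle & \mbox{if $\Phi_i^p\langle i,n\rangle<q(n)$}\\ 0 & \mbox{otherwise}\end{array}\right.\]
and argues that since $\Phi_i^p$ is total whenever $\varphi_i^p$ is, and since $q$ is not majorized by any total $p$-computable function, there is some $n$ with $\Phi_i^p\langle i,n\rangle<q(n)$, so $r$ actually agrees with $\varphi_i^p$ at position $\langle i,n\rangle$. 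You instead invoke the uniform Kurtz theorem $\HYP\equivW1\mbox{-}\WGEN$ (Fact~\ref{fact:HYP}) and check directly from the definitions that a weakly $1$-generic $q$ (relative to $p$) already lands in every dense c.e.\ open set $U_r$ and so meets every total $r\leqT p$. Your route is conceptually cleaner and makes the connection between $\MEET$ and genericity transparent, and your intermediate step $\MEET\leqSW1\mbox{-}\WGEN$ is a strong reduction (a modest refinement not recorded in the paper). On the other hand the paper's construction is self-contained: it does not consume Fact~\ref{fact:HYP} as a black box, and in fact its time-complexity trick is in the same spirit as the machinery typically used to prove that uniform Kurtz result in the first place. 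For the easy direction $\HYP\leqSW\MEET$ the two proofs are essentially the same trivial variant: the paper shifts the output $q\mapsto q+1$, while you shift the comparison function $r\mapsto r+1$; both observations give a strong reduction.
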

\begin{proof}
For $p\in\IN^\IN$ we have
\[\MEET(p)=\{q\in\IN^\IN:(\forall i)(\varphi_i^p\mbox{ total }\TO(\exists n)\;\varphi_i^p(n)= q(n))\}.\]
We note that in the case of $\HYP$ we obtain a corresponding formulation with $\not\geq$ instead of $=$. In this case totality
does not need to be mentioned as the negative condition is automatically satisfied by partial $\varphi_i^p$.
It is obvious that $\HYP\leqSW\MEET$, as we just have to use $q\mapsto q+1$ to translate the solution on the same input $p$.
For the opposite direction we note that by the smn-theorem there is a computable $s:\IN\to\IN$ such that $\varphi_{s(i)}^p(n)=\Phi_{i}^p\langle i,n\rangle$ for all $i,n\in\IN$. 
Here $\Phi_i^p(n)$ denotes the time complexity, i.e., the number of computation steps required to compute $\varphi_i^p(n)$ (if it exists and undefined otherwise).
Then given an input $p\in\IN^\IN$ and $q\in\HYP(p)$ we can compute $r\in\IN^\IN$ with
\[r\langle i,n\rangle:=\left\{\begin{array}{ll}
  \varphi^p_i\langle i,n\rangle & \mbox{if $\Phi_i^p\langle i,n\rangle<q(n)$}\\
  0 & \mbox{otherwise}
\end{array}\right.\]
for all $i,n\in\IN$.
If $i\in\IN$ is such that $\varphi_i^p$ is total, then $\Phi_i^p$ and hence $\varphi^p_{s(i)}$ are total too and hence there is some 
$n\in\IN$ with $\Phi_i^p\langle i,n\rangle=\varphi_{s(i)}^p(n)<q(n)$.
This implies that $r\langle i,n\rangle=\varphi_i^p\langle i,n\rangle$, i.e., $r\in\MEET(p)$. 
Since $r$ can be computed, given $p,q$, we obtain $\MEET\leqW\HYP$.
\end{proof}

We note that the backwards reduction is not a strong Weihrauch reduction.

\section{Retractions, All-or-Unique Choice and G\"odel Numbers}
\label{sec:RET}

In the previous sections we have discussed the stashing of a number of problems of type $f:\In X\mto\IN$ with natural number output.
In this particular situation we can also describe stashing in an alternative way. 
This is because the space $\overline{\IN}^\IN$ is related to the space $\IN^{\In\IN}$ of partial functions $f:\In\IN\to\IN$
that we can represent by $\delta_{\IN^{\In\IN}}\langle i,p\rangle:=\varphi_i^p$ for all $i\in\IN,p\in\IN^\IN$. 
The exact relation between these two spaces is captured in the following lemma. The function $\iota$ essentially identifies $\uparrow$ (i.e., undefined) with $\bot$.

\begin{lem}[Space of partial functions]
\label{lem:partial}
The function $\iota:\IN^{\In\IN}\to\overline{\IN}^\IN$ with
\[\iota(p)(n):=\left\{\begin{array}{ll}
p(n) & \mbox{if $n\in\dom(p)$}\\
\bot & \mbox{otherwise}
\end{array}\right.\]
is computable and there is a computable function $\sigma:\overline{\IN}^\IN\to\IN^{\In\IN}$ such that
$\sigma\circ\iota(p)$ is an extension of $p$ for all $p\in\IN^{\In\IN}$.
\end{lem}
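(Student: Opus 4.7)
The plan is to realize $\iota$ by a straightforward dovetailed simulation, and to realize $\sigma$ by a single fixed oracle machine index $j$ that scans each component of its oracle for the first non-zero entry.

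For computability of $\iota$, recall that a $\delta_{\overline{\IN}}$-name of $m\in\IN$ is any sequence with first non-zero entry $m+1$ \emph{and} infinitely many non-zero entries, while any sequence with only finitely many non-zero entries (in particular $0^\omega$) is a name of $\bot$. Given an input $\langle i,p\rangle$ naming $\varphi_i^p\in\IN^{\In\IN}$, I would produce a name $\langle r_0,r_1,r_2,\ldots\rangle$ of $\iota(\varphi_i^p)$ by defining the $k$-th digit of $r_n$ as follows: simulate $\varphi_i^p(n)$ for $k$ steps; write $0$ if it has not yet halted, write $m+1$ if the simulation first halts at step $k$ with value $m$, and write $1$ if it halted at some earlier step. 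Thus $r_n = 0^t(m+1)1^\omega$ when $\varphi_i^p(n) = m$ halts at time $t$, and $r_n = 0^\omega$ when $\varphi_i^p(n)$ is undefined; both are valid $\delta_{\overline{\IN}}$-names of the required value $\iota(\varphi_i^p)(n)$. The padding by $1$'s is essential: without it, $r_n$ would have only finitely many non-zero entries and so would represent $\bot$ rather than $m$.

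For $\sigma$, I would fix once and for all an oracle Turing machine index $j$ such that $\varphi_j^s(n)$ reads positions $\langle n,0\rangle,\langle n,1\rangle,\ldots$ of the oracle $s$ in order, halts as soon as it finds some non-zero value $m+1$ with output $m$, and otherwise diverges. The realizer of $\sigma$ simply sends a $\delta_{\overline{\IN}^\IN}$-name $r$ of $f$ to $\langle j,r\rangle$, which is a $\delta_{\IN^{\In\IN}}$-name of $\varphi_j^r$. If $r$ is a name of $\iota(p)$ and $n\in\dom(p)$, then $\iota(p)(n) = p(n)\in\IN$ and, by definition of $\delta_{\overline{\IN}}$, the $n$-th component $r_n$ has first non-zero entry $p(n)+1$; therefore $\varphi_j^r(n)$ halts with output $p(n)$, and $\varphi_j^r$ extends $p$ as required.

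The main subtlety concerns the single-valued reading of $\sigma$: on positions where $\iota(p)(n)=\bot$, different names $r$ of $\iota(p)$ may contain spurious but finitely many non-zero entries in the $n$-th component, so $\varphi_j^r(n)$ may be defined with various values off $\dom(p)$. Thus $\sigma$ as realized actually depends on the particular name and should be understood either as a computable multi-valued map or, equivalently, as a $\delta_{\IN^{\In\IN}}$-realized function whose value is pinned down only up to the freedom of extending $p$ on $\IN\setminus\dom(p)$. Since the lemma merely requires that $\sigma\circ\iota(p)$ extend $p$ and places no constraint off $\dom(p)$, this is harmless, and the single-arrow notation of the statement is to be read accordingly.
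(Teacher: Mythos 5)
Your proof is correct and follows exactly the approach the paper intends; the paper itself only offers the remark that ``the proof is straightforward'' together with the observation that a name of $\bot$ may initially look like a name of a number, which is precisely the padding issue you articulate. You flesh out both directions in full: the dovetailed, $1$-padded simulation realizing $\iota$ (and your observation that the $1$-padding is essential, since $0^t(m+1)0^\omega$ would again name $\bot$) and the single fixed index $j$ realizing $\sigma$ are exactly the constructions that the paper's remark points at.

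One point worth keeping: your observation about the single-valuedness of $\sigma$ is a genuine one and slightly sharpens the statement as written. A realizer of the form $r\mapsto\langle j,r\rangle$ does \emph{not} realize a function $\overline{\IN}^\IN\to\IN^{\In\IN}$ in the strict sense, because two names of the same $f\in\overline{\IN}^\IN$ with $f(n)=\bot$ may contain different spurious non-zero digits in the $n$-th track and thus yield different $\varphi^r_j(n)$. As you say, $\sigma$ must be understood as a computable \emph{multi-valued} map (or, equivalently, only at the realizer level), and no single-valued computable $\sigma$ with the stated property can exist -- which is consistent with the paper's own remark that $\overline{\IN}^\IN$ and $\IN^{\subseteq\IN}$ are not computably isomorphic. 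Since the lemma's intended use (Corollary on $\Sigma f\equivSW{}^\varphi f$) only ever consumes the realizer, this looseness is harmless, and your proof supplies exactly what that application needs.
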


The proof is straightforward. We just note that a prefix of a name of $\bot$ can start like a name of a natural number $n\in\IN$
and continue with dummy symbols $0$, which means that it is actually a name of $\bot$. 
Hence, we do not get that the spaces $\overline{\IN}^\IN$ and $\IN^{\In\IN}$ are computably isomorphic.
However, Lemma~\ref{lem:partial} roughly speaking states that they are ``isomorphic up to extensions''.
Hence, for every property that is invariant under extensions, it does not matter whether we work with $\overline{\IN}^\IN$ or $\IN^{\In\IN}$.
This does, in particular, apply to stashing. Hence, for problems of type $f:\In X\mto\IN$ we can also describe stashing by the following definition.

\begin{defi}
For every problem $f:\In X\mto\IN$ we define $^\varphi f:\In X^\IN\mto\IN^\IN$ by
\[^\varphi f(x_n)_{n\in\IN}:=\{\langle i,p\rangle\in\IN^\IN:(\exists n\in\dom(\varphi_i^p))\;\varphi_i^p(n)\in f(x_n)\}\]
for all $(x_n)_{n\in\IN}\in\dom(f)^\IN$.
\end{defi}

Hence, as an immediate corollary of Lemma~\ref{lem:partial} we obtain the following corollary.

\begin{cor}[Stashing for discrete outputs]
\label{cor:discrete-stashing}
$\Sigma f\equivSW {\,}^\varphi f$ for all problems $f:\In X\mto\IN$.
\end{cor}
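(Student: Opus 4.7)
The plan is to observe that both $\Sigma f:\In X^\IN\mto\overline{\IN}^\IN$ and ${}^\varphi f:\In X^\IN\mto\IN^\IN$ have the same instance space $\dom(f)^\IN$, and that the ``$(\exists n)\;y_n\in f(x_n)$'' clause in $\Sigma f$ corresponds precisely to the ``$(\exists n\in\dom(\varphi_i^p))\;\varphi_i^p(n)\in f(x_n)$'' clause in ${}^\varphi f$ under the correspondence between $\overline{\IN}^\IN$ and $\IN^{\In\IN}$ made available by Lemma~\ref{lem:partial}. So both reductions will be of the identity-on-instances kind, and the work is entirely in translating outputs by means of $\iota$ and $\sigma$.

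For the direction $\Sigma f\leqSW{}^\varphi f$, I would take $K=\id$ and let $H$ be a computable realizer of $\iota\circ\delta_{\IN^{\In\IN}}$, which exists because $\iota$ is computable by Lemma~\ref{lem:partial}. Given $\langle i,p\rangle\in{}^\varphi f(x_n)$, there is some $n\in\dom(\varphi_i^p)$ with $\varphi_i^p(n)\in f(x_n)$; then $\iota(\varphi_i^p)(n)=\varphi_i^p(n)\in f(x_n)$, so $\iota(\varphi_i^p)\in\Sigma f(x_n)$.

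For the direction ${}^\varphi f\leqSW\Sigma f$, I would again take $K=\id$ and let $H$ be a computable realizer of $\sigma$, which exists by Lemma~\ref{lem:partial}. Given $y\in\Sigma f(x_n)\In\overline{\IN}^\IN$, pick $n$ with $y(n)\in f(x_n)\In\IN$. Every $y\in\overline{\IN}^\IN$ lies in the image of $\iota$ (taking $p_y(n):=y(n)$ when $y(n)\in\IN$ and undefined otherwise yields $\iota(p_y)=y$), so Lemma~\ref{lem:partial} gives $\sigma(y)=\sigma\iota(p_y)\supseteq p_y$; in particular $\sigma(y)(n)=p_y(n)=y(n)\in f(x_n)$. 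Reading $\sigma(y)$ through the representation $\delta_{\IN^{\In\IN}}\langle i,p\rangle=\varphi_i^p$ yields a name $\langle i,p\rangle\in{}^\varphi f(x_n)$.

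There is no real obstacle here; the only subtlety worth flagging is verifying that the key property ``$\sigma(y)(n)=y(n)$ whenever $y(n)\in\IN$'' really does follow from the weaker-sounding condition in Lemma~\ref{lem:partial} that $\sigma\circ\iota(p)$ extends $p$. This is handled by the observation above that $\iota$ is onto (so any $y$ can be written as $\iota(p_y)$), after which the extension property transfers verbatim.
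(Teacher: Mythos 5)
Your proposal is correct and takes essentially the same route the paper has in mind: the paper declares the corollary immediate from Lemma~\ref{lem:partial} because the defining condition of $\Sigma f$ is invariant under total extensions, and you make precisely that precise by using $\iota$ one way and $\sigma$ the other, with the helpful observation that surjectivity of $\iota$ lets you transfer the ``$\sigma\circ\iota(p)$ extends $p$'' clause to the needed statement ``$\sigma(y)(n)=y(n)$ whenever $y(n)\in\IN$.'' No gaps.
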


This again sheds  light on the fact that stashing of parallelizable problems gives us computability-theoretic problems.

We note that the usage of the G\"odel numbering and partial functions $\varphi_i^p$ in the previous sections was
mostly related to the input side of problems. Hence, implicitly, we have worked with the input space $\IN^{\In\IN}$.
Now, we want to get some better understanding of the effect of the space $\overline{\IN}^\IN$ on the output side.
Corollary~\ref{cor:discrete-stashing} can be seen as a way of replacing $\overline{\IN}^\IN$ on the output side by $\IN^\IN$.

For some problems, as those discussed in Propositions~\ref{prop:Sigma-ACC} and \ref{prop:Sigma-LPO} it appeared
that directly replacing $\overline{\IN}^\IN$ by $\IN^\IN$ on the output side was also possible without changing the degree.
However, the problem $\NGEQ$ from Definition~\ref{def:HYP} is an example that shows that this is not always possible.
If we just replace $\overline{\IN}^\IN$ by $\IN^\IN$ in $\Sigma(\NGEQ)$, then we obtain $\NMAJ$, which is not
equivalent to $\Sigma(\NGEQ)$ by Proposition~\ref{prop:NMAJ}. 

Hence, it is useful to have upper bounds on the price that such a direct replacement of $\overline{\IN}^\IN$ by $\IN^\IN$
costs. This is exactly captured by the following {\em retraction problem} $\RET_X$ that we define together with the closely related extension problem $\EXT_X$.

\begin{defi}[Retraction and extension problems]
\label{def:RET}
For every represented space $X$ and $Y\In\IN$ we define the following problems:
\begin{enumerate}
\item $\RET_X:\overline{X}\mto X,x\mapsto\{y\in X:x\in X\TO x=y\}$,
\item $\EXT_Y:\In\IN^\IN\mto Y^\IN,\langle i,p\rangle\mapsto\{q\in Y^\IN:q\mbox{ is a total extension of }\varphi_i^p\}$,\\
where $\dom(\EXT_Y):=\{\langle i,p\rangle:\range(\varphi_i^p)\In Y\}$.
\end{enumerate}
\end{defi}

The parallelization $\widehat{\RET_X}$ captures the complexity of translating $\overline{X}^\IN$ into $X^\IN$.
By Lemma~\ref{lem:partial} we obtain the following.

\begin{cor}
\label{cor:par-RET}
$\widehat{\RET_X}\equivSW\EXT_X$ for all $X\In\IN$.
\end{cor}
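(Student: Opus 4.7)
The plan is to unfold both problems into the same computational task, namely: given a partial function $f\colon\subseteq\IN\to X$, compute a total function $g\colon\IN\to X$ with $g\supseteq f$. The representations on the two sides encode partial functions differently (through $\overline{X}^\IN$ on the $\widehat{\RET_X}$ side, and through G\"odel numbers $\langle i,p\rangle$ of $\varphi_i^p$ on the $\EXT_X$ side), and Lemma~\ref{lem:partial} is exactly the bridge we need to move between them.

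For the reduction $\widehat{\RET_X}\leqSW\EXT_X$: given a name of $(x_n)\in\overline{X}^\IN$, I would use the s-m-n theorem to compute $\langle i,p\rangle$ such that $\varphi_i^p$ on input $n$ reads the $n$-th component of the name and halts exactly when that component eventually reveals a value $x_n\in X\In\IN$, in which case it returns $x_n$. Then $\varphi_i^p(n)=x_n$ whenever $x_n\in X$, and $\varphi_i^p(n)$ is undefined when $x_n=\bot$; in particular $\range(\varphi_i^p)\In X$, so $\langle i,p\rangle\in\dom(\EXT_X)$. Any total extension $q\in X^\IN$ of $\varphi_i^p$ then satisfies $q(n)=x_n$ whenever $x_n\in X$, which is precisely a solution of $\widehat{\RET_X}(x_n)$. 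The construction of $\langle i,p\rangle$ from the name is computable (this is essentially the map $\sigma$ of Lemma~\ref{lem:partial} restricted to $\overline{X}^\IN$, applied componentwise), giving a strong Weihrauch reduction.

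For the converse $\EXT_X\leqSW\widehat{\RET_X}$: given $\langle i,p\rangle$ with $\range(\varphi_i^p)\In X$, I would computably produce a name of $(x_n)\in\overline{X}^\IN$ with $x_n=\varphi_i^p(n)$ whenever $\varphi_i^p(n){\downarrow}$ and $x_n=\bot$ otherwise. Since $\range(\varphi_i^p)\In X$ the sequence indeed lies in $\overline{X}^\IN$, and the name can be produced by dovetailing the computation of $\varphi_i^p$ on each argument $n$ while filling component $n$ with the default symbols of the completion until convergence. This is exactly the parallelization of the computable map $\iota$ of Lemma~\ref{lem:partial} restricted to ensure values remain in $X$. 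Any output $(y_n)\in\widehat{\RET_X}(x_n)$ then satisfies $y_n=x_n=\varphi_i^p(n)$ whenever $\varphi_i^p(n){\downarrow}$, so $(y_n)$ is a total extension of $\varphi_i^p$, i.e.\ a solution of $\EXT_X\langle i,p\rangle$.

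There is no real obstacle here; the only subtle point is to check that the construction of $\langle i,p\rangle$ in the first direction preserves the constraint $\range(\varphi_i^p)\In X$, which it does by design, and that the name produced in the second direction is truly in $\overline{X}^\IN$ (not merely in $\overline{\IN}^\IN$), which follows from the hypothesis $\range(\varphi_i^p)\In X$. Both reductions are strong Weihrauch reductions, establishing $\widehat{\RET_X}\equivSW\EXT_X$.
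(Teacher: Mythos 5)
Your proof is correct and takes essentially the same route as the paper, which simply cites Lemma~\ref{lem:partial} and leaves the unpacking implicit: the map $\sigma$ gives the reduction $\widehat{\RET_X}\leqSW\EXT_X$ and the (parallelized) map $\iota$ gives the converse, exactly as you spell out. The only wording to tighten is ``halts exactly when that component eventually reveals a value $x_n\in X$'': a name of $\bot\in\overline{X}$ may begin like a name of some $k\in X$ and then trail off into dummy symbols, so the realized $\varphi_i^p$ may also halt at some positions with $x_n=\bot$ (outputting an element of $X$); this over-halting is harmless for the range constraint and for correctness at positions with $x_n\in X$, but the word ``exactly'' overstates what $\sigma$ can achieve.
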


The problem $\RET_X:\overline{X}\mto X$ is a multi-valued retraction, i.e., a problem that satisfies
$\RET_X|_{X}=\id_{X}$. And it is the simplest such retraction in terms of strong Weihrauch reducibility.
Hence, we obtain the following result.

\begin{prop}[Multi-retraceability]
\label{prop:multi-retraceability}
Let $X$ be a represented space. Then there exists a computable multi-valued retraction $R:\overline{X}\mto X$ if and only if 
$\RET_X$ is computable. 
\end{prop}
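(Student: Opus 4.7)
The plan is to prove both directions by essentially unpacking definitions; the argument is so direct that there is no genuine obstacle, but one should be careful about how to interpret $R|_X = \id_X$ for a multi-valued $R$.

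For the forward direction, I would simply observe that $\RET_X$ is itself a multi-valued retraction. Indeed, for $x \in X$ the implication $x \in X \TO x = y$ reduces to $x = y$, so $\RET_X(x) = \{x\}$, which means $\RET_X|_X = \id_X$. Hence if $\RET_X$ is computable, one can just take $R := \RET_X$ as the required computable multi-valued retraction.

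For the reverse direction, suppose that $R : \overline{X} \mto X$ is a computable multi-valued retraction. I would show that $R$ is already a refinement of $\RET_X$ in the sense that $R(x) \In \RET_X(x)$ for every $x \in \overline{X}$, so that every realizer of $R$ is automatically a realizer of $\RET_X$. There are two cases: if $x \in X$, then by the retraction property $R(x) = \{x\} = \RET_X(x)$; if $x = \bot$, then $R(\bot) \In X = \RET_X(\bot)$, since the implication $\bot \in X \TO \bot = y$ is vacuous and the set of permitted outputs is all of $X$. In either case $R(x)\In \RET_X(x)$, so computability of $R$ transfers to computability of $\RET_X$.

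The only point that requires any care is the interpretation of the retraction condition $R|_X = \id_X$ for multi-valued $R$: read as an equality of problems (rather than merely requiring $x \in R(x)$), this forces $R(x) = \{x\}$ on $X$, which is exactly what the reverse direction uses. With that convention fixed, the proof is a two-line verification that $R$ realizes $\RET_X$.
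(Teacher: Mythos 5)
Your proof is correct and takes essentially the same route as the paper. The paper also observes in the ``if'' direction that $\RET_X$ is itself a multi-valued retraction, and for the converse it invokes the remark immediately preceding the proposition that $\RET_X$ is the $\leqSW$-least multi-valued retraction, i.e., $\RET_X\leqSW R$; your refinement argument ($R(x)\In\RET_X(x)$, so every realizer of $R$ realizes $\RET_X$) is precisely the identity $\leqSW$-reduction spelled out, and your remark about the strict reading of $R|_X=\id_X$ is exactly the convention the paper is using.
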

\begin{proof}
It is clear that $\RET_X$ is a multi-valued retraction, which yields the ``if''--direction of the proof.
On the other hand, if there is a computable multi-valued retraction $R$, then $\RET_X\leqSW R$ and hence
$\RET_X$ is computable too. 
\end{proof}

Spaces that allow for computable multi-valued retractions were called {\em multi-retraceable} in \cite{BG20}.
This condition was further studied by Hoyrup in \cite{Hoy21} and related to fixed-point properties.
Here we rather have to deal with spaces that are not multi-retraceable.
In these cases we can consider the complexity of $\RET_X$ as a measure of how far the space $X$
is away from multi-retraceability or, in other words, how difficult it is to determine total extensions.
We gain some upper bounds directly from \cite[Proposition~2.10]{BG21a}.

\begin{fact}
\label{fact:RET}
$\RET_X\leqSW\lim$ for all totally represented $X$ and $\RET_\IN\leqSW\C_\IN$.
\end{fact}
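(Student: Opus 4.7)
The plan is to handle the two reductions separately, both exploiting that an input $p\in\IN^\IN$ with $\delta_{\overline{X}}(p)\in X$ corresponds exactly to the case that $p-1$ is an infinite sequence, while $\delta_{\overline{X}}(p)=\bot$ corresponds to $p-1$ being a finite prefix; in the latter case any element of $X$ is an acceptable output for the retraction.

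For $\RET_X\leqSW\lim$ with $X$ totally represented, I would construct from the input name $p$ a sequence $(q_s)_{s\in\IN}$ of elements of $\IN^\IN$ to be fed into $\lim$. At stage $s$ I inspect $p(0),\ldots,p(s)$, compute the finite prefix of $p-1$ that this data determines, and pad it with zeros to an element $q_s\in\IN^\IN$. For each position $k$, the value $q_s(k)$ stabilizes either on $(p-1)(k)$ (once sufficiently many non-zero entries of $p$ have been seen) or on $0$ (if that position of $p-1$ never materializes), so the pointwise limit $q:=\lim_s q_s$ exists. If $p-1$ is infinite then $q=p-1$, so $\delta_X(q)=\delta_{\overline{X}}(p)$; if $p-1$ is finite, $q$ extends it by zeros and $\delta_X(q)\in X$ is a valid (arbitrary) output because $\delta_X$ is total. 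The computable map $K:p\mapsto\langle q_0,q_1,\ldots\rangle$ and $H:=\id$ then witness the strong Weihrauch reduction.

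For $\RET_\IN\leqSW\C_\IN$, I would compute from $p$ an instance $q$ of $\C_\IN$ whose only omitted value is the candidate answer, if any. The idea is to output zeros into $q$ until the first non-zero digit $k+1$ of $p$ appears, and from that moment onward to enumerate into $\range(q-1)$, stage by stage, every natural number except $k$. Three cases then verify the reduction: if $p$ has exactly one non-zero entry $k+1$, then $\range(q-1)=\IN\setminus\{k\}$ and $\C_\IN(q)=\{k\}$, the correct answer; if $p$ has two or more non-zero entries, $\range(q-1)=\IN\setminus\{k\}$ again and $\C_\IN(q)=\{k\}$ is acceptable because $\delta_{\overline{\IN}}(p)=\bot$; and if $p$ has no non-zero entries, then $q$ is the zero sequence, $\range(q-1)=\emptyset$, and any $n\in\IN$ returned by $\C_\IN$ is acceptable for the same reason.

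The main obstacle is respecting the domain condition of $\C_\IN$: the constructed $q$ must always leave at least one natural number unused by $q-1$, and moreover that unused number must coincide with the correct retraction value whenever one exists. This forces the construction to commit irrevocably to skipping $k$ the instant the value $k+1$ appears as a non-zero digit of $p$ and never to enumerate $k$ later on, even if $p$ subsequently reveals a second non-zero digit; a naive eager enumeration would destroy the property by putting $k$ into the range before learning that $k$ is the intended answer.
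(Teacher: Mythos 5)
Your two constructions are both correct, and they supply a self-contained proof where the paper itself merely cites \cite[Proposition~2.10]{BG21a} and gives no argument; that is a genuinely different (and more explicit) route to the same bounds. The $\lim$ reduction is clean: pad the currently-known prefix of $p-1$ with zeros to obtain $q_s$, each coordinate of $q_s$ stabilizes, and totality of $\delta_X$ makes the limit $q$ a valid name in every case.

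There is, however, a wrong case split in your verification of $\RET_\IN\leqSW\C_\IN$, one which contradicts the correct characterization you yourself state in your opening sentence. You write that with exactly one non-zero entry $k+1$ the answer $k$ is \emph{the} correct answer, and that with two or more non-zero entries $\delta_{\overline{\IN}}(p)=\bot$. Neither is right. With exactly one non-zero entry the string $p-1$ has length one, hence is finite, so $\delta_{\overline{\IN}}(p)=\bot$ and there is no uniquely correct answer (your output $k$ is merely permissible). With two or more non-zero entries, if there are in fact infinitely many of them then $p-1\in\IN^\IN$ and $\delta_{\overline{\IN}}(p)=(p-1)(0)=k$, so $k$ is the \emph{unique} admissible output, not an arbitrary one; only when there are finitely many non-zero entries (two, three, or any finite number) is $\delta_{\overline{\IN}}(p)=\bot$. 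The correct dichotomy is finitely-many versus infinitely-many non-zero entries, exactly as you said at the start. Fortunately your construction outputs the first non-zero digit minus one in all cases where a non-zero digit ever appears, and this value lies in $\RET_\IN(\delta_{\overline{\IN}}(p))$ in every branch of the \emph{correct} case analysis, so the reduction itself is sound; only the justification needs to be rewritten along the finitely/infinitely-many split.
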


For finite $X\In\IN$ we can classify $\RET_X$ somewhat more precisely. 
It is quite easy to see that the retraction problem $\RET_n$ is just equivalent to all-or-unique choice $\AoUC_n$
and hence located in between $\LLPO=\AoUC_2$ and $\C_n$, as well as $\LPO$.

\begin{prop}[Retraction problem]
\label{prop:RET}
$\LLPO\leqSW\RET_n\equivSW\AoUC_n\leqSW\C_n$, and\linebreak $\RET_n\leqSW\RET_\IN\equivW\LPO$ for all $n\geq2$.
\end{prop}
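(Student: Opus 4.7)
The plan is to break the statement into four independent chunks: (a) the equivalence $\RET_n\equivSW\AoUC_n$, (b) the flanking reductions $\LLPO\leqSW\RET_n$ and $\AoUC_n\leqSW\C_n$, (c) the embedding $\RET_n\leqSW\RET_\IN$, and (d) the equivalence $\RET_\IN\equivW\LPO$. For (a), I exploit that $\overline{n}$-names and $\AoUC_n$-instances encode the same combinatorial event (a unique distinguished index, or no constraint at all). Given an $\overline{n}$-name $q$, scan for the first digit $d\geq 1$; if $d\leq n$, start enumerating $\{0,\ldots,n-1\}\setminus\{d-1\}$ for the $\AoUC_n$-instance, otherwise keep it empty. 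The converse direction is dual: hold the $\overline{n}$-name at $0$'s, and once the $\AoUC_n$-enumeration has listed $n-1$ elements, commit by emitting the successor of the unique missing index. For (b), $\AoUC_n\leqSW\C_n$ is immediate from the definitions, and $\LLPO\leqSW\AoUC_n$ is obtained by embedding $\AoUC_2$-instances into $\AoUC_n$-instances: when the $\LLPO$-input commits to avoiding $j\in\{0,1\}$, start enumerating $\{0,\ldots,n-1\}\setminus\{j\}$ in the target, and let the output processor clip any value $\geq 2$ down to $0$, which is harmless since this only occurs when the empty $\LLPO$-enumeration makes every answer admissible.

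For (c), I preprocess the input $\overline{n}$-name by replacing every digit larger than $n$ by $0$ before feeding it to $\RET_\IN$, and clip any $\RET_\IN$-output $\geq n$ back to $0$. A digit $>n$ can only appear when the represented $\overline{n}$-element is $\bot$ (where any output is acceptable), while unchanged names still represent the same $k<n$ in $\overline{\IN}$, so this gives a strong reduction. The easy half of (d), $\RET_\IN\leqW\LPO$, is handled by applying $\LPO$ to the input name $q$ itself: if $\LPO(q)=1$ return $0$, otherwise locate $q$'s first non-zero digit $d$ and return $d-1$. This is correct because whenever $q$ has only finitely many non-zero entries the represented element is $\bot$, where any natural number output is acceptable.

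The hard direction $\LPO\leqW\RET_\IN$ is where I expect the main obstacle, since a naive continuity-style argument suggests that $\RET_\IN$'s adversarial behaviour on $\bot$-inputs could mimic any forced behaviour on $\IN$-inputs, obstructing any output processor that only looks at the $\RET_\IN$-answer. The trick I would use is to arrange that the $\RET_\IN$-answer, when forced, \emph{is itself an index into} $p$ that the post-processor can verify against the original input. Concretely, I would define $K(p)$ to output $0$'s while $p$ is still identically zero, then at the first position $n_0$ with $p(n_0)\neq 0$ emit the single digit $n_0+1$, and continue with $1$'s thereafter. A short computation shows $\delta_{\overline{\IN}}(K(p))=\bot$ when $p=0^\omega$ and $\delta_{\overline{\IN}}(K(p))=n_0$ otherwise. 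The post-processor then reads the first digit $k$ of the $\RET_\IN$-output and consults $p(k)$: it outputs $0$ if $p(k)\neq 0$ and $1$ if $p(k)=0$. Correctness follows because in the $n_0$-case the $\RET_\IN$-realizer is forced to return $k=n_0$, so $p(k)=p(n_0)\neq 0$, while in the $\bot$-case any returned $k$ trivially satisfies $p(k)=0$. Since the post-processor must look up $p$ at a position determined by the $\RET_\IN$-output, this yields only $\leqW$ and not $\leqSW$, matching the statement.
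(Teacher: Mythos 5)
Your proposal is correct in its overall structure and follows essentially the same approach as the paper: the $\RET_n\equivSW\AoUC_n$ equivalence by exchanging ``one distinguished index or unconstrained'' between $\overline{n}$-names and $\AoUC_n$-instances, and the $\LPO\leqW\RET_\IN$ reduction by encoding the position of the first non-zero digit of $p$ into the $\overline{\IN}$-input so that a forced $\RET_\IN$-answer can be verified against $p$ (while an unforced answer is harmless in the $\bot$-case). The paper also treats $\LLPO\leqSW\RET_n$, $\AoUC_n\leqSW\C_n$ and $\RET_n\leqSW\RET_\IN$ as immediate, while you spell them out.

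Two small slips in your extra detail, both easily repaired. In $\LLPO\leqSW\AoUC_n$: if the $\LLPO$-input enumerates $j$ (so the answer set is $\{1-j\}$), you should enumerate $\{0,\ldots,n-1\}\setminus\{1-j\}$ on the $\AoUC_n$-side to force the answer $1-j$; as written, $\{0,\ldots,n-1\}\setminus\{j\}$ forces the wrong answer $j$. In $\RET_n\leqSW\RET_\IN$: the claim that ``a digit $>n$ can only appear when the represented $\overline{n}$-element is $\bot$'' is false---such digits can occur in the irrelevant tail of a name of a genuine $k\in n$. The reduction still works, though, because after the first non-zero digit $d\leq n$ the subsequent digits are irrelevant to both $\delta_{\overline{n}}$ and $\delta_{\overline{\IN}}$, so the represented element of $\IN$ equals $k$; and in the remaining cases $\RET_n(\bot)=n$ accepts any clipped output. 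A cleaner route is to compute the first non-zero digit $d$ of the $\overline{n}$-name and output a $\overline{\IN}$-name of $d-1$ if $d\leq n$ and of $\bot$ otherwise.

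Your explanation for why $\LPO\leqW\RET_\IN$ is not strong (the post-processor must consult $p$ at a position supplied by the oracle answer) correctly identifies the obstruction and matches why the paper only states $\equivW$ rather than $\equivSW$ for $\RET_\IN$ and $\LPO$.
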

\begin{proof}
We start with proving $\RET_n\leqSW\AoUC_n$. Given a name $p$ of $x\in\overline{\IN}$ we generate
a name $q$ of the set $\{0,1,...,n-1\}$, as long as only dummy information appears in $p$.
If some prefix of $p$ starts to look like a name of some $k< n$, then we modify the output
$q$ to a name of the set $\{k\}$.  Since $k\in\RET_n(x)$, this yields the desired reduction.

Now we consider the inverse reduction $\AoUC_n\leqSW\RET_n$.
Given a name $p$ of a set $A\In\{0,1,...,n-1\}$ with $A=\{0,1,...,n-1\}$ or $A=\{k\}$ for some $k<n$,
we generate a name of $\bot$ as long as the name looks like a name of $\{0,1,...,n-1\}$. In the moment
where the represented set is clearly smaller, we wait until the information suffices to identify the singleton $\{k\}$,
and then we modify the output to an output of $k\in\overline{\{0,1,...,n-1\}}$.
If we apply $\RET_n$ to the generated point in $\overline{\{0,1,...,n-1\}}$ it yields a point in $A$.
This establishes the desired reduction.

The reductions $\LLPO=\AoUC_2\leqSW\AoUC_n\leqSW\C_n$ are clear for all $n\geq2$.
The reduction $\RET_n\leqSW\RET_\IN$ is easy to see, as well as $\RET_\IN\leqW\LPO$.
For the latter reduction, we consider a name $p$ of an input $x\in\overline{\IN}$. We use $\LPO$
in order to decide whether $p\not=\widehat{0}$. If $p=\widehat{0}$, then we can produce any
output $k\in\IN$; otherwise we search for the first non-zero component $k+1$ in $p$ and
produce the corresponding $k$ as output. This yields the reduction $\RET_\IN\leqW\LPO$.
Finally, the reduction $\LPO\leqW\RET_\IN$ can be seen as follows. Given an input $p\in\IN^\IN$,
we seek the first non-zero component. If this component appears in position $n$, then we generate
the output $x=n\in\overline{\IN}$. If there is no non-zero component, then we generate the output $x=\bot\in\overline{\IN}$.
Now given some $k\in\RET_\IN(x)$ and $p$, we search a non-zero component in $p$ up to position $k$.
If such a non-zero component appears, then $\LPO(p)=0$; otherwise $\LPO(p)=1$. This describes
the reduction $\LPO\leqW\RET_\IN$.
\end{proof}

Using this result, we obtain the pentagon of $\RET_n$ in the Weihrauch lattice.
By Fact~\ref{fact:parallelization-problem} we have $\widehat{\LLPO}\equivSW\widehat{\C_n}\equivSW\WKL$,
which implies $\widehat{\RET_n}\equivSW\WKL$.
On the other hand, by Proposition~\ref{prop:Sigma-LPO} we have $\uwidehat{\LLPO}\equivSW\uwidehat{\LPO}\equivSW\DIS$
and this yields $\RET_n\equivW\DIS$.

\begin{cor}[Pentagon of retractions]
\label{cor:RET}
$\uwidehat{\RET_n}\equivW\uwidehat{\RET_\IN}\equivW\DIS$, $\widehat{\RET_n}\equivSW\WKL$, and $\widehat{\RET_\IN}\equivW\lim$  for all $n\geq2$.
\end{cor}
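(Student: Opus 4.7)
The plan is to read off each equivalence in the corollary as a direct consequence of Proposition~\ref{prop:RET}, together with the fact that parallelization is a closure operator (Fact~\ref{fact:parallelization}) and stashing is an interior operator (Proposition~\ref{prop:stashing}); both therefore preserve (strong) Weihrauch reductions and hence equivalences. No new reductions between problems need to be constructed; the corollary is essentially an application of monotonicity.

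First I would handle the two parallelization statements. Applying $\widehat{\,}$ to the chain $\LLPO \leqSW \RET_n \leqSW \C_n$ from Proposition~\ref{prop:RET} gives $\widehat{\LLPO} \leqSW \widehat{\RET_n} \leqSW \widehat{\C_n}$, and by Fact~\ref{fact:parallelization-problem} both endpoints are strongly Weihrauch equivalent to $\WKL$, squeezing $\widehat{\RET_n} \equivSW \WKL$. Similarly, applying $\widehat{\,}$ to the equivalence $\RET_\IN \equivW \LPO$ from Proposition~\ref{prop:RET} yields $\widehat{\RET_\IN} \equivW \widehat{\LPO} \equivSW \lim$, again by Fact~\ref{fact:parallelization-problem}. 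Note that the latter is only an ordinary Weihrauch equivalence because $\RET_\IN \equivW \LPO$ was proved only with respect to $\equivW$ in Proposition~\ref{prop:RET}.

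Next I would handle the stashing statement by the same sandwiching technique. Applying $\Sigma$ to the chain $\LLPO \leqSW \RET_n \leqSW \RET_\IN \equivW \LPO$ gives
\[
\uwidehat{\LLPO} \leqSW \uwidehat{\RET_n} \leqSW \uwidehat{\RET_\IN} \equivW \uwidehat{\LPO}.
\]
By Proposition~\ref{prop:Sigma-ACCn} (in combination with the identification $\LLPO = \ACC_2$) we have $\uwidehat{\LLPO} \equivSW \DIS$, and by Proposition~\ref{prop:Sigma-LPO} we have $\uwidehat{\LPO} \equivSW \DIS$. Hence both ends of the displayed chain collapse to $\DIS$, forcing $\uwidehat{\RET_n} \equivW \uwidehat{\RET_\IN} \equivW \DIS$.

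There is no real obstacle here: the entire proof consists of citing Proposition~\ref{prop:RET}, invoking monotonicity of $\widehat{\,}$ and $\Sigma$, and inserting the computations of $\widehat{\LLPO}$, $\widehat{\LPO}$, $\uwidehat{\LLPO}$, and $\uwidehat{\LPO}$ established in Fact~\ref{fact:parallelization-problem}, Proposition~\ref{prop:Sigma-ACCn} and Proposition~\ref{prop:Sigma-LPO}. The only subtlety worth flagging is that the ordinary (rather than strong) Weihrauch symbol in $\uwidehat{\RET_\IN} \equivW \DIS$ and $\widehat{\RET_\IN} \equivW \lim$ reflects the fact that the base equivalence $\RET_\IN \equivW \LPO$ from Proposition~\ref{prop:RET} is not known to hold strongly.
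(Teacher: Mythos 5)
Your proof is correct and follows the same sandwiching strategy that the paper sketches in the paragraph preceding the corollary: use the chains from Proposition~\ref{prop:RET}, apply monotonicity of $\Pi$ and $\Sigma$, and close the loops with the known values $\widehat{\LLPO}\equivSW\widehat{\C_n}\equivSW\WKL$, $\widehat{\LPO}\equivSW\lim$, $\uwidehat{\LLPO}\equivSW\DIS$ and $\uwidehat{\LPO}\equivSW\DIS$. Your write-up is actually slightly more precise in its citations than the paper's remark (which attributes $\uwidehat{\LLPO}\equivSW\DIS$ to Proposition~\ref{prop:Sigma-LPO}, whereas as you note it really comes from Proposition~\ref{prop:Sigma-ACCn} via $\LLPO=\ACC_2$), and you correctly flag the reason the $\RET_\IN$ statements hold only up to $\equivW$.
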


This means that in the Weihrauch lattice  the pentagon of $\RET_n$ for $n\geq 2$ looks like the pentagon of $\LLPO$
in Figure~\ref{fig:LLPO-pentagon}, while the pentagon of $\RET_\IN$ looks like the pentagon of $\LPO$
in Figure~\ref{fig:LPO-pentagon} (except that
one cannot use strong Weihrauch reducibility). The first mentioned fact is also interesting, as the problems $\RET_n$ form a strictly increasing chain, as we prove next by a simple cardinality argument.

\begin{prop}
$\AoUC_n\lW\AoUC_{n+1}\lW\AoUC_\IN$ for all $n\geq2$.
\end{prop}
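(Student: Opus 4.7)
The plan is to establish the two reductions by direct construction and then prove strictness by a pigeonhole-plus-continuity argument.

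For $\AoUC_n\leqW\AoUC_{n+1}$, given an instance $p$ of $\AoUC_n$, I would construct an instance $q$ of $\AoUC_{n+1}$ that mirrors the enumeration performed by $p$ on $\{0,\dots,n-1\}$ and additionally enumerates the extra element $n$ as soon as $p$ enumerates its first element. Then $q$ is in the empty case exactly when $p$ is, and otherwise $q$ is in the unique case with the same missing element as $p$. The post-processor returns the received answer unchanged if it lies in $\{0,\dots,n-1\}$ and maps $n$ to $0$. The reduction $\AoUC_{n+1}\leqW\AoUC_\IN$ is analogous: mirror the given instance and additionally enumerate $\{n+1,n+2,\dots\}$ once the input starts to enumerate, so that an empty-case input is sent to an empty-case one and a unique-case input with missing element $k\in\{0,\dots,n\}$ is sent to a $\AoUC_\IN$-instance with the same missing element.

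For strictness, I would prove $\AoUC_{n+1}\nleqW\AoUC_n$ in detail; the cases $\AoUC_\IN\nleqW\AoUC_{n+1}$ and $\AoUC_\IN\nleqW\AoUC_n$ are entirely analogous. Assume for contradiction a reduction via computable $K,H$. Consider the empty-case input $\widehat{0}$ of $\AoUC_{n+1}$. The set $A:=\AoUC_n(K(\widehat{0}))$ has cardinality at most $n$ (it equals $\{0,\dots,n-1\}$ if $K(\widehat{0})$ is empty case and is a singleton otherwise), so $V:=\{H\langle\widehat{0},a\rangle:a\in A\}\subseteq\{0,\dots,n\}$ has at most $n$ elements, and we can pick $m^*\in\{0,\dots,n\}\setminus V$. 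By continuity of $K$ and $H$, I would choose a prefix $\widehat{0}|_{N^*}$ long enough that for any $p$ extending it one has $K(p)|_M=K(\widehat{0})|_M$ for an $M$ covering all non-zero entries of $K(\widehat{0})$, and $H\langle p,a\rangle=H\langle\widehat{0},a\rangle$ for each $a\in A$ whenever defined. Then let $p^*$ extend this prefix by an enumeration of $\{0,\dots,n\}\setminus\{m^*\}$, placing $p^*$ in the unique case of $\AoUC_{n+1}$ with answer $m^*$.

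The main obstacle is verifying the containment $\AoUC_n(K(p^*))\subseteq A$. If $K(\widehat{0})$ is in empty case, this is trivial since $\AoUC_n(K(p^*))\subseteq\{0,\dots,n-1\}=A$. If $K(\widehat{0})$ is in unique case with missing element $k^*$, the preserved prefix forces $\range(K(p^*)-1)\supseteq\{0,\dots,n-1\}\setminus\{k^*\}$; since $K(p^*)\in\dom(\AoUC_n)$ caps the range at $n-1$ elements and rules out the empty case, the containment is an equality, giving $\AoUC_n(K(p^*))=\{k^*\}=A$. For any valid answer $a$ we then obtain $H\langle p^*,a\rangle\in V$, contradicting the required output $m^*\notin V$. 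The cases with $\AoUC_\IN$ in the role of $\AoUC_{n+1}$ run identically once $m^*$ is chosen from $\IN\setminus V$, which is possible since $V$ is finite and $\IN$ is infinite; the enumerated set $\IN\setminus\{m^*\}$ is infinite, but this only affects the length of $p^*$, not the argument.
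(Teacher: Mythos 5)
Your proof is correct and takes essentially the same route as the paper's: restrict attention to the all-zeros name of the empty case, observe by pigeonhole that the reduction can produce at most $n$ candidate outputs, and use continuity of $K$ and $H$ to find a finite prefix past which those candidates are locked in, then extend to a singleton instance missing an element not among the candidates. The one place your write-up is more careful than the paper's terse argument is the explicit verification that $\AoUC_n(K(p^*))\subseteq A$ via the case split on whether $K(\widehat 0)$ is in the empty or unique case; the paper leaves this implicit.
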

\begin{proof}
It suffices to prove $\AoUC_{n+1}\nleqW\AoUC_n$ for all $n\geq2$.
Let us assume for a contradiction that $\AoUC_{n+1}\leqW\AoUC_n$ for some $n\geq 2$ via computable $H,K$.
Now consider the name $p=000...$ of the empty set $\emptyset=\range(p-1)$. Then $a_k:=H\langle p,k\rangle\in\{0,...,n\}$ for $k<n$ satisfy $|\{a_0,...,a_{n-1}\}|\leq n$.
For each $k<n$ there is some prefix of $p$ of length $l_k$ that suffices to produce the output $a_k$. For the prefix $l:=\max\{l_0,...,l_n\}$ we have
\[|H\langle p|_l\IN^\IN\times\{0,...,n\}\rangle|=|\{a_0,...,a_{n-1}\}|\leq n,\]
despite the fact that $p|_l\IN^\IN$  contains names of all singleton sets $A\In\{0,1,...,n\}$.
Since there are $n+1$ such sets, this is a contradiction!
\end{proof}

That means that with the problems $\RET_n\equivSW\AoUC_n$ we have a strictly increasing sequence of problems
in between $\LLPO$ and $\LPO$ that all parallelize and stash to the same problems, respectively. 
This is a strong refutation of injectivity of the following operation.

\begin{cor}
\label{cor:injectivity}
The map $f\mapsto(\uwidehat{f},\widehat{f})$ is not injective on Weihrauch degrees.
\end{cor}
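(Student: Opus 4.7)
The plan is to exhibit two Weihrauch-inequivalent problems that have the same pair $(\uwidehat{f},\widehat{f})$ of stashing and parallelization, using the chain $(\AoUC_n)_{n\geq2}$ just shown to be strictly increasing.

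First, I would fix two distinct indices, say $n=2$ and $n=3$, and recall from the preceding proposition that $\AoUC_2 \lW \AoUC_3$. In particular, $\AoUC_2 \nequivW \AoUC_3$, so these two problems represent distinct Weihrauch degrees.

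Next I would compute both coordinates of the map on each of these degrees. For the parallelization, Proposition~\ref{prop:RET} gives $\LLPO \leqSW \AoUC_n \leqSW \C_n$ for every $n\geq2$, so by monotonicity of $\Pi$ and Fact~\ref{fact:parallelization-problem} (which states $\widehat{\LLPO}\equivSW\widehat{\C_n}\equivSW\WKL$), we get $\widehat{\AoUC_2}\equivSW\widehat{\AoUC_3}\equivSW\WKL$. For the stashing, since $\AoUC_n\equivSW\RET_n$ by Proposition~\ref{prop:RET}, Corollary~\ref{cor:RET} yields $\uwidehat{\AoUC_2}\equivW\uwidehat{\AoUC_3}\equivW\DIS$.

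Putting the two coordinates together, the pair $(\uwidehat{\AoUC_2},\widehat{\AoUC_2})$ coincides (up to Weihrauch equivalence in each coordinate) with $(\uwidehat{\AoUC_3},\widehat{\AoUC_3})$, while $\AoUC_2$ and $\AoUC_3$ are distinct Weihrauch degrees. Hence $f\mapsto(\uwidehat{f},\widehat{f})$ is not injective on Weihrauch degrees. No step here is a real obstacle, since all three inputs (the strict inequality and the computations of both coordinates) are available from results already proved in the paper; the only care needed is to note that Proposition~\ref{prop:RET} provides the $\leqSW$-bounds that make Fact~\ref{fact:parallelization-problem} applicable for the parallelization coordinate.
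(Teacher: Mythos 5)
Your proof is correct and follows essentially the same route as the paper: it uses the strictly increasing chain $(\AoUC_n)_{n\geq 2}$ (equivalently $(\RET_n)_{n\geq 2}$) together with the facts that they all parallelize to $\WKL$ and all stash to $\DIS$. The only cosmetic difference is that you re-derive $\widehat{\AoUC_n}\equivSW\WKL$ from the sandwich $\LLPO\leqSW\AoUC_n\leqSW\C_n$ when Corollary~\ref{cor:RET} already states it directly; either way the argument is sound.
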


This means that pentagons do not characterize the Weihrauch degrees that generate them, even though they reveal some useful
information about them.

\section{Limit Avoidance and a Phase Transition}
\label{sec:lim}

The pentagons determined so far are all related to the discontinuity problem.
This could lead to the false conclusion that $\DIS$ is the bottom vertex of typical pentagons.
However, this is rather based on the fact that our study of pentagons was motivated
by an analysis of the discontinuity problem. 
In this closing section we study the weakest problem known to us
whose stashing is not $\DIS$.
This is the {\em limit avoidance problem} $\NLIM_\IN$, defined below.
We define it together with the related {\em non-lowness problem} $\NLOW$.
  
\begin{defi}[Problems related to limits]
We consider the following problems for $X\In\IN$:
\begin{enumerate}
\item $\NLIM_X:\In X^\IN\mto X,p\mapsto\{k\in X:\lim_{n\to\infty}p(n)\not=k\}$ with\\
        $\dom(\NLIM_X)=\{p:(p(n))_{n\in\IN}$ converges$\}$,
\item $\NLOW:\DD\mto\DD,a\mapsto\{b\in\DD:b'\not\leq a'\}$.
\end{enumerate}
\end{defi}

The following fact is easy to see:

\begin{fact}
\label{fact:NLIM}
$\ACC_\IN\leqSW\NLIM_\IN\leqSW\NLIM_2\equivSW\lim_2\leqSW\lim_\IN\equivSW\C_\IN$.
\end{fact}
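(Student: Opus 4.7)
The plan is to establish each link in the chain separately. Two are essentially immediate. For $\NLIM_2\equivSW\lim_2$, the solution set $\NLIM_2(q)=\{0,1\}\setminus\{\lim q\}$ is the singleton $\{1-\lim q\}$, so the two problems are strongly equivalent via the self-inverse $k\mapsto 1-k$. For $\lim_2\leqSW\lim_\IN$ one just uses the inclusion $\{0,1\}\In\IN$, since a convergent binary sequence is a convergent natural-number sequence with the same limit.

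For $\ACC_\IN\leqSW\NLIM_\IN$, given $p\in\dom(\ACC_\IN)$ with $|\range(p-1)|\leq 1$, I would compute a sequence $q\in\IN^\IN$ that at stage $n$ outputs the unique value seen in $p-1$ up to that stage, or $0$ if no value has yet appeared. Since at most one value can ever appear, $q$ converges either to the unique $m\in\range(p-1)$ or to $0$ (in which case $\range(p-1)=\emptyset$ and every answer is valid), and any $k\neq\lim q$ automatically solves $\ACC_\IN(p)$. For $\NLIM_\IN\leqSW\NLIM_2$, given $p$ converging to $\ell$, I would take $q(n):=p(n)\bmod 2$, a binary sequence converging to $\ell\bmod 2$. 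A solution $b\in\{0,1\}$ to $\NLIM_2(q)$ satisfies $b\neq\ell\bmod 2$ and hence $b\neq\ell$ in $\IN$, so $b$ itself solves $\NLIM_\IN(p)$.

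The bulk of the work lies in the equivalence $\lim_\IN\equivSW\C_\IN$. For $\C_\IN\leqSW\lim_\IN$, given $p$ enumerating the complement of a nonempty set $A\In\IN$, the sequence $q(n):=\min\bigl(\IN\setminus\{p(i)-1:i<n,\,p(i)\geq 1\}\bigr)$ is non-decreasing and stabilizes at $\min A$, so $\lim q\in A$.

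The hardest step is $\lim_\IN\leqSW\C_\IN$, where the obstacle is that $\IN\setminus\{\lim p\}$ is only $\sO{2}$ in $p$ and cannot be c.e.-enumerated uniformly; hence no direct $K$ with $\C_\IN(K(p))=\{\lim p\}$ can exist. My plan is to pass to the pair space via $\langle\cdot,\cdot\rangle$: arrange $K(p)$ so that $K(p)-1$ enumerates every pair $\langle m,a\rangle$ for which some $n>m$ witnesses $p(n)\neq a$; this is uniformly c.e.\ in $p$. A pair $\langle m,a\rangle$ escapes this enumeration iff $p(n)=a$ for all $n>m$, which for a convergent $p$ happens precisely when $a=\lim p$ and $m$ is past the stabilization point. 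Hence $\C_\IN(K(p))$ is nonempty and contained in $\{\langle m,\lim p\rangle:m\in\IN\}$, and the computable post-processor $H(k):=\pi_2(k)$ uniformly returns $\lim p$ from any element of $\C_\IN(K(p))$.
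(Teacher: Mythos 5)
The paper states this Fact without proof, calling it ``easy to see,'' so there is nothing in the source to compare against; your job was to supply the argument, and you did so correctly. Each link checks out: $\NLIM_2\equivSW\lim_2$ via the involution $k\mapsto 1-k$; $\lim_2\leqSW\lim_\IN$ by inclusion; $\ACC_\IN\leqSW\NLIM_\IN$ by tracking the at-most-one enumerated value (with default $0$), which produces a convergent sequence whose limit is the forbidden value whenever there is one; $\NLIM_\IN\leqSW\NLIM_2$ via reduction mod $2$, using that $b\not\equiv\ell\pmod 2$ implies $b\neq\ell$; $\C_\IN\leqSW\lim_\IN$ by the non-decreasing ``current minimum of the unenumerated part,'' which stabilizes at $\min A$; and $\lim_\IN\leqSW\C_\IN$ by enumerating into the forbidden set all pairs $\langle m,a\rangle$ witnessed to fail $(\forall n>m)\,p(n)=a$, so that the surviving pairs are exactly $\{\langle m,\lim p\rangle : m\geq m_0\}$ and the second projection recovers $\lim p$.

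The only real content in the chain is this last equivalence $\lim_\IN\equivSW\C_\IN$, and your Shoenfield-limit-lemma-style pairing trick is the standard and correct way to do it; you were also right to flag that the naive attempt to make $\C_\IN(K(p))$ a singleton $\{\lim p\}$ cannot work, since $\IN\setminus\{\lim p\}$ is not uniformly c.e.\ in $p$. All the reductions you exhibit are genuinely strong (the outer functions $H$ depend only on the oracle's answer, never on the original input), as required. No gaps.
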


If we can prove that $\DIS\lW\uwidehat{\NLIM_\IN}$ holds, then this implies that the stashings of all problems
above $\NLIM_\IN$ also lie above $\DIS$. We prove the following stronger result.

\begin{prop}
\label{prop:NLIM-WKL}
$\uwidehat{\NLIM_\IN}\nleqW\WKL$.
\end{prop}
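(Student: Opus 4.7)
Suppose for contradiction that $\uwidehat{\NLIM_\IN}\leqW\WKL$ is witnessed by computable $H,K:\In\IN^\IN\to\IN^\IN$. The plan is to diagonalize against the pair $(H,K)$ by a finite extension / König-style construction, producing a \emph{computable} input $p\in\IN^\IN$ encoding convergent sequences $(p_n)_{n\in\IN}$ with limits $c_n\in\IN$, together with a path $x\in[K\langle p\rangle]$, for which the output $(y_n):=H\langle p,x\rangle$ satisfies $y_n\in\{\bot,c_n\}$ for every $n$. Since no coordinate is then a valid witness $y_n\in\IN\setminus\{c_n\}$, this contradicts the requirement that $H\langle\id,GK\rangle$ realize $\uwidehat{\NLIM_\IN}$ for every realizer $G$ of $\WKL$ (each path through $K\langle p\rangle$ being the value of some such realizer).

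The central observation is monotonicity of the output: because $\delta_{\overline{\IN}^\IN}$ reads off $y_n$ as the first nonzero digit of the $n$-th substream, once a finite stage of $H(\tau,\sigma)$ has produced such a digit, no extension can change the resulting $y_n\in\IN$. Start the construction with all $p_n=0^\omega$ (tentative limit $c_n=0$), $\tau_0=\sigma_0=\varepsilon$, and no coordinate \emph{committed}. At stage $s+1$, simulate $H(\tau_s,\sigma_s)$ for $s$ steps; for every coordinate $n$ in which a first nonzero digit of the $n$-th substream has just appeared, declaring some $y_n=v\in\IN$, commit $c_n:=v$ (if $v=0$ this is the default and nothing changes; otherwise the future entries of $p_n$ are now stipulated to be $v$). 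Extend $\tau_s$ to $\tau_{s+1}$ consistently with all commitments (writing $c_n$ at committed coordinates and $0$ elsewhere), and extend $\sigma_s$ by one bit to a node on the partially-determined tree $K(\tau_{s+1})$ that still has infinite descendants; if no such bit has yet been certified by $K$, pad $\tau_{s+1}$ further and wait.

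At the limit, $p$ is computable with every $p_n$ eventually constant, hence convergent to $c_n\in\IN$, so $p\in\dom(\uwidehat{\NLIM_\IN})$. Monotonicity of $K$'s output guarantees that every prefix of $x:=\sigma_\infty$ is on $K\langle p\rangle$, so $x\in[K\langle p\rangle]$; monotonicity of $H$ guarantees that every coordinate of $H\langle p,x\rangle$ either remains $\bot$ or equals the committed $c_n$, giving the desired contradiction. The main obstacle is the waiting step maintaining an infinite extension of $\sigma_s$ on $K\langle p\rangle$: only finitely many nodes are certified by $K(\tau_s)$ at any stage, so extendability cannot be checked directly. It is obtained from compactness, exploiting that the reduction forces $K\langle p'\rangle$ to be an infinite binary tree for \emph{every} $p'\in\dom(\uwidehat{\NLIM_\IN})$ extending $\tau_s$, so some live descendant of $\sigma_s$ must eventually appear in $K(\tau)$ as $\tau$ grows. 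Dovetailing this waiting with the commitment and extension steps above completes the construction.
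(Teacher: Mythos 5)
Your approach is a direct diagonalization via a finite-extension construction, which is entirely different from the paper's proof. The paper proceeds structurally: since $\WKL$ is a cylinder, $\uwidehat{\NLIM_\IN}\leqW\WKL$ upgrades to $\leqSW$; since stashing is an interior operator and $\uwidehat{\NLIM_\IN}$ is already stashed, this gives $\uwidehat{\NLIM_\IN}\leqSW\Sigma\WKL\equivSW\WKL^\DD\equivSW\PA$ (Fact~\ref{fact:PA}); then a fixed \emph{low} PA-degree $r$ (low basis theorem) forces the reduction to produce a single limit-computable total function $t$ that must differ from $\lim_i p_i$ for \emph{every} computable instance, which is absurd because every limit-computable $t$ arises as such a limit. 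None of that machinery appears in your argument.

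The issue, however, is not just that the route is different: there is a genuine gap exactly at the point you flag as "the main obstacle." Your construction commits irrevocably to $\sigma_s$ at stage $s$ (the commitments made via $H(\tau_s,\sigma_s)$ are locked into $\tau$ and cannot be undone), yet it needs $\sigma_s$ to remain on an infinite branch of $K(p)$ for the \emph{final} $p$. Having infinite descendants is a $\Pi^0_1$ property of the full tree and cannot be certified from any finite $K(\tau)$, so the construction cannot ever know it has chosen a live bit. Worse, there is a real circularity: $\tau_{s+1}$ encodes (through the commitments) the string $\sigma_s$, so the computable function $K$ can in principle react to your past choices of $\sigma$ and prune all sufficiently long descendants of $\sigma_s$ at a later level, leaving $\sigma_s$ a dead end in $K(p)$ while the tree stays infinite through a disjoint branch. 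Your compactness appeal — "$K\langle p'\rangle$ is an infinite tree for every admissible $p'$, so some live descendant must appear" — only shows that \emph{some} path exists; it does not show that the path can be taken to coincide with the sequence of bits that generated $p$. Formally you would need the diagonal set $\{x\in 2^\IN : x\in[K(p_x)]\}$ to be nonempty, which is a fixed-point-type assertion that does not follow from closedness and nonemptiness of each $[K(p_x)]$ (a map $x\mapsto[K(p_x)]$ could, a priori, always avoid $x$). In the same vein, the claim that the resulting $p$ is \emph{computable} would require the construction to compute a path through a computable infinite binary tree, i.e.\ to solve an instance of $\WKL$ computably, which is exactly what is not possible in general. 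To make this style of argument go through one would need an additional idea showing that $K$ cannot consistently anticipate and prune the diagonal sequence — and that is precisely the content that the paper extracts indirectly via the low basis theorem.
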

\begin{proof}
We recall that we are working with
\[\uwidehat{\NLIM_\IN}:\In\IN^\IN\mto\overline{\IN}^\IN,\langle p_0,p_1,p_2,...\rangle\mapsto\{q\in\overline{\IN}^\IN:(\exists n)\;\lim_{i\to\infty}p_i(n)\not=q(n)\in\IN\}.\]
Let us assume for a contradiction that $\uwidehat{\NLIM_\IN}\leqW\WKL$ holds. As $\WKL$ is a cylinder (i.e., satisfies $\id\times\WKL\equivSW\WKL$),
it follows that $\uwidehat{\NLIM_\IN}\leqSW\WKL$. Since stashing is an interior operator, this yields $\uwidehat{\NLIM_\IN}\leqSW\WKL^\DD\equivSW\PA$ by Fact~\ref{fact:PA}.
But this means that every fixed $r\in2^\IN$ of PA--degree has the property that it computes some fixed $q$ with $q\in\uwidehat{\NLIM_\IN}\langle p_0,p_1,p_2,...\rangle$
for all computable $\langle p_0,p_1,p_2,...\rangle\in\dom(\uwidehat{\NLIM_\IN})$.
This is because the reduction $\uwidehat{\NLIM_\IN}\leqSW\PA$ is a strong one and hence $q$ only depends on $r$.
There are low $r\in 2^\IN$ of PA--degree, i.e., such that $r'\leqT\emptyset'$. This is because the set of diagonally non-computable binary functions is co-c.e.\ closed, every such function is of 
PA-degree (see \cite[Theorem~10.3.3]{Soa16}) and by the low basis theorem there is a low function among those (see \cite[Theorem~3.7.2]{Soa16}).
Hence there is some fixed low $q$ with $q\in\uwidehat{\NLIM_\IN}\langle p_0,p_1,p_2,...\rangle$
for all computable $\langle p_0,p_1,p_2,...\rangle\in\dom(\uwidehat{\NLIM_\IN})$. 
By Corollary~\ref{cor:RET} this implies that there is some $t\in\widehat{\RET_\IN}(q)$, which is limit computable, as $\widehat{\RET_\IN}\leqSW\lim$ and limit computable
problems yield some limit computable outputs on low inputs. This means that for a fixed limit computable $t:\IN\to\IN$ we have that
$\lim_{i\to\infty}p_i\not= t$
for all computable $\langle p_0,p_1,p_2,...\rangle\in\dom(\uwidehat{\NLIM_\IN})$, which is impossible, as $\lim_{i\to\infty}p_i$ is limit computable.
\end{proof}

We mention that Fact~\ref{fact:NLIM} implies that $\DNC_\IN\leqSW\widehat{\NLIM_\IN}\leqSW\lim$. 
We leave it to a future study to determine the exact pentagons of the problems listed in Fact~\ref{fact:NLIM} other than $\ACC_\IN$.
Here we just mention one upper bound on $\Pi\Sigma(\NLIM_\IN)$.

Since $\DIS\lW\NON\lW\WKL$ it is clear that Proposition~\ref{prop:NLIM-WKL} implies $\uwidehat{\NLIM_\IN}\nleqW\DIS$
and also $\Pi\Sigma(\NLIM_\IN)\nleqW\NON$. It is not difficult to show that the non-lowness problem is an upper bound
for $\Pi\Sigma(\NLIM_\IN)$.

\begin{prop}
\label{prop:NLOW}
$\Pi\Sigma(\NLIM_\IN)\leqSW\NLOW$.
\end{prop}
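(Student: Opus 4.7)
The plan is as follows. Take $K$ to send the input $(p_{n,k})$ to a name of $\deg_\mathrm{T}(r) \in \DD$, where $r \in \IN^\IN$ encodes the input; then $\NLOW$ returns some $s \in \IN^\IN$ with $s' \nleqT r'$. Writing $q_{n,k} := \lim_i p_{n,k}(i)$, the matrix $q$ is uniformly $r'$-computable, and the functional $H(s)$ must produce $(y_{n,k}) \in \overline{\IN}^{\IN \times \IN}$ solving the instance, using only $s$.

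My plan for $H(s)$ is to make every row of $y$ equal to a single $s$-computable sequence $c$ encoding the standard halting approximation of the jump $s'$. Concretely, $c(k)$ is the natural number whose $i$-th binary digit (for $i \leq k$) records whether $\varphi^s_i(i)$ halts in at most $k$ steps; the sequence $c$ is monotone, and each bit stabilizes to the corresponding bit of $s'$. If the output $y$ fails to solve the instance, then for some $n_0$ the row $q_{n_0, \cdot}$ coincides with $c$ (matching any $\bot$-entries trivially), so $c$ would be $r'$-computable. We then read off $s'$ from $c$ and aim to conclude $s' \leqT r'$, contradicting the choice of $s$.

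The main obstacle I foresee is the last step: a straightforward monotone decoding only places $s'$ in $\Sigma^{0, r'}_1$, which yields $s' \leqT r''$ rather than the desired $s' \leqT r'$. To close this gap, I would refine the construction to use the completion $\overline{\IN}$ non-trivially: at coordinates where the approximation is not yet certified to be stable I would place $\bot$, and include $\IN$-valued certification entries only when stabilization can be verified from the row itself. Then a matching row implicitly provides, through the committed coordinates, an $r'$-computable modulus of stabilization; with such a modulus in hand both $s'$ and its complement become $r'$-c.e., giving $s' \leqT r'$ and thus the required contradiction.
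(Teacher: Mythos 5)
Your high-level structure matches the paper's: feed (a name of) the input degree to $\NLOW$, obtain $s$ with $s' \nleqT r'$, build from $s$ alone a single sequence $c\in\overline{\IN}^\IN$ repeated across every row of the output, and argue that a failing row would give an $r'$-computable total extension of $c$, forcing $s'\leqT r'$. The only structural difference is that the paper first notes $\NLOW$ is strongly parallelizable, reducing the task to $\uwidehat{\NLIM_\IN}\leqSW\NLOW$ and eliminating the $\Pi$ layer up front; you carry the double index through by hand, which is fine but costs some bookkeeping.

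The genuine gap is in the construction of $c$. You correctly diagnose that the monotone halting-approximation only places $s'$ in $\Sigma^{0,r'}_1$, yielding $s'\leqT r''$. But your proposed refinement --- putting $\bot$ ``where the approximation is not yet certified to be stable'' and committing $\IN$-values ``only when stabilization can be verified from the row itself'' --- is not a well-defined algorithm. From $s$ alone you can only certify stabilization of bits that become $1$ (i.e.\ $\varphi^s_i(i)\!\downarrow$); a bit that stays $0$ can never be certified, since that would amount to deciding non-halting from $s$. So it is unclear what the ``committed coordinates'' are, and unclear why a matching row would hand you a modulus rather than the same $\Sigma^0_1$ information you started with. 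The paper closes this gap by encoding the halting \emph{times} rather than the halting \emph{states}: set $c\langle i,n\rangle := \Phi^s_i(n)$ (the running time) when defined and $\bot$ otherwise. This $c$ is computable from $s$ (emit the step count once the computation halts; a divergent computation yields $\bot$ under the $\overline{\IN}$-representation), and any total $t\in\IN^\IN$ extending $c$ computes $s'$ outright: simulate $\varphi^s_i(n)$ for $t\langle i,n\rangle$ steps; if it has not halted, it cannot halt at all, because otherwise $t\langle i,n\rangle=c\langle i,n\rangle=\Phi^s_i(n)$ would have bounded it. No second limit is involved, so $s'\leqT t\leqT r'$ gives the contradiction immediately. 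This time-bound trick is the concrete realization of the ``modulus'' idea you were reaching for; without it, your argument does not close.
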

\begin{proof}
For simplicity we work with the equivalent definition 
\[\NLOW:\IN^\IN\mto\IN^\IN,p\mapsto\{q\in\IN^\IN:q'\nleqT p'\}.\]
We first note that $\NLOW$ is parallelizable: this follows since $q'\nleqT\langle p_0,p_1,p_2,...\rangle'$ implies $q'\nleqT p_i'$ for all $i\in\IN$.
Hence, it suffices to show $\uwidehat{\NLIM_\IN}\leqSW\NLOW$ in order to obtain $\Pi\Sigma(\NLIM_\IN)\leqSW\NLOW$.
Now given $p:=\langle p_0,p_1,p_2,...\rangle$ such that $r=\lim_{i\to\infty}p_i$ exists in $\IN^\IN$, we have $r\leqT p'$.
Now let $q\in\IN^\IN$ be such that $q'\nleqT p'$. Then using the time complexity function $\Phi_i^q$ we can define 
by $s\langle i,n\rangle:=\Phi^q_i(n)$ a partial function $s:\In\IN\to\IN$
that is computable from $q$. This function has the property that every total extension $t:\IN\to\IN$ of it computes $q'$.
This is because we can simulate the computation of $\varphi^q_i(n)$ for given $i,n$ for $t\langle i,n\rangle$ time steps. And
either the computation halts within this time bound, which implies $q'\langle i,n\rangle=1$ or it does not halt, which implies that $t\langle i,n\rangle$ is not a 
correct time bound, hence $\Phi^q_i(n)$ is undefined and hence $q'\langle i,n\rangle=0$.
By Lemma~\ref{lem:partial} we can consider $s$ as a function of type $s:\overline{\IN}\to\IN$. 
Since every total extension $t:\IN\to\IN$ of it computes $q'$, we obtain $t\nleqT p'$, which implies $r\not=t$ for every such extension $t$.
Hence there is some $n\in\IN$ with $r(n)\not=s(n)\in\IN$, i.e., $s\in\uwidehat{\NLIM_\IN}(p)$. 
This establishes the reduction $\Pi\Sigma(\NLIM_\IN)\leqSW\NLOW$.
\end{proof}

\section{Conclusions}
\label{sec:conclusions}

We have introduced the stashing operation as a dual of parallelization and we have proved that it is 
an interior operator. The action of parallelization and stashing on Weihrauch degrees naturally leads
to the study of stashing-parallelization pentagons, which can be used to describe a number of
natural Weihrauch degrees. In many cases of the studied pentagons the discontinuity problem
featured as the bottom problem of the respective pentagon.
However, we were also able to identify a phase transition point, where this no longer
happens to be the case.
The duality inherent in pentagons needs to be studied further in order to simplify
the calculation of pentagons, which tends to be difficult in more advanced examples.

\bibliographystyle{alpha}
\bibliography{lit}

\section*{Acknowledgments}

This work has been supported by the {\em National Research Foundation of South Africa} (Grant Number 115269).

\end{document}